\def\accentsfrancais{applemac}
\newtheorem{theorem}{Theorem}[section]
\newtheorem{definition}{Definition}[section]
\newtheorem{lemma}{Lemma}[section]
\newtheorem{proposition}[theorem]{Proposition}
\newtheorem{remark}{Remark}[section]
\newenvironment{proof}{\medskip\noindent{\bf Proof.}\;}{\null\hfill $\Box$\par\medskip }
\title{Hyperbolic wavelet transform: an efficient tool for multifractal analysis of anisotropic textures\footnote{This work has been supported by the ANR grant \emph{AMATIS} (ANR2011 BS01 011 02) and the CNRS, Groupe de Recherche \emph{Analyse Multifractale}. }
\author{ \small \sc P. Abry\thanks{Physics Dept., ENS Lyon, CNRS, UMR5672, Lyon, France.}, M. Clausel\thanks{
       University of Grenoble, CNRS, Laboratoire Jean Kuntzmann UMR 5224, Saint Martin d'H\`eres, France.}, S. Jaffard\thanks{ Universit\'e Paris Est,  LAMA, UMR 8050, Cr\'eteil, France.},
          S.G. Roux\footnotemark[2]  and B.Vedel\thanks{LMBA, Universit\'e de Bretagne Sud, European University of Bretagne, Vannes, France.} }
          }
\date{}
\newsavebox{\fmbox}
\newenvironment{fmpage}[1]
 {\begin{lrbox}{\fmbox}\begin{minipage}{#1}}
 {\end{minipage}\end{lrbox}\fbox{\usebox{\fmbox}}}
\def\rmd{\mathrm{d}}
\def\rme{\mathrm{e}}
\def\rmi{\mathrm{i}}
\begin{document}
\maketitle
\begin{abstract}
Global and local regularities of functions are analyzed in anisotropic function spaces, under a common framework, that of hyperbolic wavelet bases.
Local and directional regularity features are characterized by means of global quantities constructed upon the coefficients of hyperbolic wavelet decompositions.
A multifractal analysis is introduced, that jointly accounts for scale invariance and anisotropy.
Its properties are studied in depth.
\end{abstract}
{\bf Keywords :} Hyperbolic wavelet analysis, Anisotropic Besov Spaces, Pointwise H\"{o}lder Regularity, Anisotropic Multifractal Analysis.\\
{\bf 2010 Mathematics Subject Classification : } 42C40, 46E35.\\
\section{Introduction}

Natural images  often  display various forms of anisotropy.
For a wide range of applications, anisotropy has been quantified through regularity characteristics and features that strongly differ when measured in different directions.
This is, for instance, the case in medical imaging (osteoporosis, muscular tissues, mammographies,...), cf. e.g.~\cite{bonami:estrade:2003,bierme:meerschaert:scheffler:2009}, hydrology~\cite{ponson:2006}, fracture surfaces analysis~\cite{davies:hall:1999},\ldots.
For  such images, a key issue consists first  in describing, within a suitable framework, the anisotropy of the texture, and  then in defining regularity anisotropy parameters that can actually and efficiently be measured via numerical procedures and further involved into e.g., classification schemes.
This requires the design of a mathematical framework that allows to define and estimate these parameters.
Such a  program can be split into several questions, some of them having already been either solved or, at least, patially addressed.

A first issue lies in introducing global and local notions of anisotropic regularity, which emcompass and extend (isotropic) regularity spaces, such as Sobolev or Besov spaces, and the classical notion of pointwise H\"older regularity.
To model anisotropy, the particular setting of an anisotropic self--similar field driven by two parameters (an {\it anisotropy matrix} and a {\it self--similarity index}) has been introduced and studied in~\cite{bierme:meerschaert:scheffler:2009}, where  it is used as a relevant model to describe osteoporosis.
In~\cite{clausel:vedel:2010}, the question of defining in a proper way the concept of anisotropy of an image in relation to its global regularity has been addressed. It has notably been shown that these two parameters can be recovered without {\it a--priori knowledge} of the characteristics of the model, by studying the global smoothness properties of the process.
Furthermore, some of the properties characterizing anisotropy are revealed by the regularity of the sample paths when analyzed with functional spaces well-adapted to anisotropy: Anisotropic Besov spaces.

This preliminary study thus showed the central role that such spaces should play in the mathematical modeling of random anisotropic textures.
The  introduction  of these spaces traces back to the study of some PDEs, cf. e.g.~\cite{triebel:1978}, for the study of semi-elliptic pseudo-differential operators whose symbols  have different degrees of smoothness along different directions, the reader is also referred to~\cite{aimar:gomez:2012}, and references therein, for a recent use of such spaces for optimal regularity results for the heat equation.
Other types of directional function spaces have also been considered, cf. e.g.~\cite{bownik:2003} for the variant supplied by Hardy directional spaces.

A second crucial issue consists in obtaining a simple characterization of these spaces on a ``dictionary''.
Indeed, the challenge  is to  measure the critical exponent of any image in anisotropic Besov spaces for different anisotropies using the same analysis tool.
Wavelet analysis is well--known to be an efficient tool for measuring smoothness in a large range of functional spaces (cf.~\cite{meyer:1990} for details).
Here, however, the main point is that the anisotropy of the analyzing spaces must not be set a priori to a fixed value but instead be allowed to vary.
Specific  bases are thus looked for, which would serve as a common dictionary for anisotropic Besov spaces with different anisotropies.
It is natural that one should use some form of anisotropic wavelets such as curvelets, bandelets, contourlets, shearlets, ridgelets, or wedgelets (see e.g.~\cite{jacques:duval:chaux:peyre:2012} for an thorough review of these representation systems and a comparison of their properties for image processing); natural criteria of choice being, on the mathematical side, that these variations on isotropic wavelets supply bases for the corresponding anisotropic spaces, and, on the applied side, that practically tractable procedure can be devised and implemented to permit the characterization of real-world data according to these function spaces.

Many authors addressed this problem, and proposed different solutions, depending on the precise definition of anisotropic space they started with, as well as on the anisotropic basis they used, cf. e.g.~\cite{devore:konyagin:temlyakov:1998,hochmuth:2002a,hochmuth:2002b,long:triebel:1979,triebel:2004} and also the recent papers~\cite{garrigos:tabacco:2002} by  G. Garrig\'os and A. Tabacco, and~\cite{haroske:tamasi:2005} by  D. Haroske and E. Tam\'asi, which  contain numerous references on the subject.
Note also that, in several cases, a particular type of anisotropy was considered: Parabolic anisotropy (where  a contraction by $\lambda$ in one direction is associated with a contraction by $\lambda^2$ along the orthogonal direction, \cite{lakhonchai:sampo:sumetkijakan:2012,sampo:sumetkijakan:2009,nualtong:sumetkijakan:2005}, and references therein, in particular for applications to directional regularity), the corresponding dictionaries being in that case curvelets or contourlets (corresponding to the Hart--Smith decomposition in the continuous setting, see~\cite{smith:1998}).
This particular choice of anisotropy was motivated from application to PDEs (see e.g., \cite{guo:labate:2008,candes:demanet:2005} where curvelets and ridgelets are used for the study of Fourier integral operators, with applications to the wave equation) but is no longer justified when dealing with images, where no particular form of anisotropy can be postulated a priori.
To the opposite, figuring out the precise form of anisotropy present in data is part of the issue.
This argument also implies that one should not restrict analysis to tools that match one specific type of anisotropy, but rather that to tools embracing all of them simultaneously, in order to be able to detect that that suits data.

From now on, two possible solutions for this problem will be focused on:
 \begin{itemize}
\item One is supplied by {\em anisotropic Triebel bases}, see~\cite{triebel:2004}, that  are constructed from the standard wavelet case through a multiresolution procedure, tailored to a specific anisotropy.
The collection of these bases does not constitute a frame.
However, for a fixed anisotropy, simple characterizations of anisotropic Besov spaces have been supplied within this system.
Such characterizations can thus be used as a building step to construct a multifractal formalism \cite{benbraiek;benslimane:2011b}.
This is further detailed in Section~\ref{s:multimulti}.

Triebel bases provide a powerful tool to deduce results on anisotropic Besov spaces, for {\it a fixed anisotropy}.
In particular, it enables to show that these spaces are isomorphic to the corresponding isotropic Besov spaces.
Further, some results such as embeddings or profiles of Besov characteristics can be obtained, via the transference method proposed by H. Triebel.
However, when it comes to understand the link between different forms of anisotropy - in term of function spaces by example - this tool remains of limited interest.
Indeed, the knowledge of the expansion of a function in one basis gives a priori neither information about its expansion in an other basis  nor about its belonging to all anisotropic Besov spaces.

\item Another possible decomposition system is supplied by {\em hyperbolic wavelets}, introduced in various settings under different denominations (standard, rectangular or hyperbolic wavelet analysis) notably in image coding (see~\cite{Westerink_P_1989_phd_subband_ci}), numerical analysis (see~\cite{Beylkin_G_1991_j-comm-pure-appl-math_fast_wtna1}, \cite{Beylkin_G_1993_p-symp-appl-math_wavelets_fna}) and in~\cite{devore:konyagin:temlyakov:1998},\cite{hochmuth:2002a} for the purpose of approximation theory.
They are simply defined as tensor products of 1D wavelets, yet allowing different dilations factors along different directions, as opposed to the classical discrete wavelet transform that relies on a single isotropic fixation factor.
This key difference enables the study of anisotropy.
Hyperbolic wavelet basis form a non--redundant system by construction, and contain all possible anisotropies.
Hyperbolic wavelet bases have thus been used in statistics for the purpose of adaptive estimation of multidimensional curves.
Notably, it has been proven in two seminal articles~\cite{neumann:vonsachs:1997} and~\cite{neumann:2000} that nonlinear thresholding of noisy hyperbolic wavelet coefficients leads to (near)--optimal minimax rates of convergence over a wide range of anisotropic smoothness classes.
The reader is also referred to the recent work of F. Autin, G. Claeskens, J.M. Freyermuth \cite{autin:claeskens:freyermuth:2012} where this problem is considered from the maxiset point of view. Other interesting applications of hyperbolic analysis can also be founded in~\cite{ayache:leger:pontier:2002},\cite{ayache:2004},\cite{ayache:xiao:2005},\cite{ayache:roueff:xiao:2009a} and~\cite{ayache:roueff:xiao:2009b} where hyperbolic wavelet decompositions of Fractional Brownian Sheets and Linear Fractional Stable Sheets are given and are used to prove many sample paths properties of these random fields (smoothness properties, Hausdorff dimension of the graph).

The key feature of hyperbolic wavelet bases is that they provide a common dictionary for anisotropic Besov spaces.
This result is stated in Theorem~\ref{th:WCBesov} of Section~\ref{s:besov}:
The critical exponent in anisotropic Besov spaces will be related to  some $\ell^p$ norms of the hyperbolic wavelet coefficients.
These mathematical results yield an efficient method for the detection of anisotropy, as detailed in a companion article, where numerical investigations are conducted,  \cite{roux:clausel:vedel:jaffard:abry:2012} .
\end{itemize}

In the present article, it has been chosen to explore the possibilities supplied by the hyperbolic wavelet transform to investigate directional regularity, both in global (anisotropic Besov spaces) and local (directional pointwise regularity) forms.
The underlying motivation is to develop a multifractal formalism relating these two notions (just as the standard multifractal formalism relates the usual Besov spaces with the  notion of (anisotropic) H\"older pointwise smoothness, see~\cite{jaffard:2004}  and references therein).
It also aims at obtaining a numerically stable procedure that thus permits to extract the anisotropic features existing in natural images as well as information related to the size (fractional dimensions) of the  corresponding geometrical sets.

Before proceeding further, let us motivate the choice of  hyperbolic wavelets against Triebel bases.
For a fixed anisotropy, one can argue that Triebel bases display slightly better mathematical advantages: An exact characterization of anisotropic Besov spaces, as shown in~\cite{triebel:2004}, and a characterization of pointwise smoothness as sharp as in the isotropic case, as shown by H. Ben Braiek and M. Ben Slimane in~\cite{benbraiek:benslimane:2011a}.
However, a first purpose of the present contribution is to show that these two important properties hold almost as well for hyperbolic wavelets:
In  Section~\ref{s:besov},  ``almost characterizations'' (i.e., necessary and sufficient conditions that differ by a logarithmic correction) of anisotropic Besov spaces are obtained.
Furthermore, if one is not only interested in analysis, but also in simulation, this slight disadvantage (a logarithmic loss, which in applications can not be detected) is overcompensated by the advantage of using a basis instead of an overcomplete system.
Indeed, generating a random field with prescribed regularity properties requires the use of a basis (using an overcomplete system  cannot guarantee a priori that the  simulated field  with coefficients of specific sizes has the expected properties, since nontrivial linear combinations of the building blocks may vanish).
A contrario, with the hyperbolic wavelet basis, one can easily provide toy examples with different multifractal spectra depending on the anisotropy.
Our being jointly motivated by analysis and synthesis motivates the choice of a system that permits an interesting  trade-off among directional wavelets, in terms of mathematical efficiency and numerical simplicity and robustness, both on the analysis and synthesis sides.
The practical relevance of the mathematical tools introduced and studied here are assessed in a companion paper~\cite{roux:clausel:vedel:jaffard:abry:2012}.  \\

Let us now  further compare Triebel and hyperbolic wavelet bases in terms of pointwise directional smoothness.
First, note that this  notion has been the subject of few investigations so far:
To our knowledge,  the natural definition  which  allows for a wavelet characterization was first introduced by  M. Ben Slimane in the 90s, see~\cite{benslimane:1998}, in order to investigate the multifractal properties of anisotropic selfsimilar functions.
Partial results  when using parabolic basis (i.e., curvelets and Hart--Smith transform) have been obtained by J. Sampo and S. Sumetkijakan see~\cite{lakhonchai:sampo:sumetkijakan:2012,sampo:sumetkijakan:2009,nualtong:sumetkijakan:2005} and references therein.
A generalization and implications in terms of sizes of coefficients on directional wavelets (the so-called ``anisets'', which are a mixture of  of the wavelet and Gabor transform, where the wavelets can be arbitrarily shrunk in certain directions) were  also worked out in~\cite{jaffard:2010}.
Finally, an ``almost ''  characterization of pointwise directional regularity was recently obtained by  H. Ben Braiek and M. Ben Slimane in~\cite{benbraiek;benslimane:2011a} on the Triebel basis coefficients, where the basis is picked so that its  anisotropy parameter is fitted to the type of directional regularity considered.
In Section~\ref{s:besov}, we will obtain a similar result, but relying on the coefficients of the hyperbolic wavelet basis, thus paving the way to the construction of a multifractal formalism.
An important difference with~\cite{benbraiek;benslimane:2011a} is that, here, a single basis fits all anisotropies.
Therefore, as in the case of Besov spaces, the advantage is that no a priori needs to be assumed on the particular considered anisotropy.
This thus can be used as a way to detect the specific anisotropy which exists in data at hand, rather than assuming a priori its particular form beforehand.
Note that other decomposition systems have also been used for the detection of local singularities, see for instance~\cite{donoho:1999,guo:labate:2011} where shearlets and wedgelets are used for the detection of discontinuities along smooth edges.\\

Let us now come back to the anisotropic self--similar  fields considered in~\cite{roux:clausel:vedel:jaffard:abry:2012,clausel:vedel:2010}. Such exactly selfsimilar models are somewhat toy examples, and, though testing regularity indices on their realizations is an important validation step, their study could prove misleadingly simple (just as, in 1D, fractional Brownian motion is too simple a model to fit the richness of situations met in  real-world data).
 Natural images are indeed likely to consist of patchworks of different kinds of deformed pieces and therefore, can be expected to exhibit more complex scale invariance properties, and only in an approximate way.
A natural setting to describe such properties, where different kinds of singularities are mixed up, is supplied by multifractal analysis.
The next step is therefore to combine both anisotropy and multifractal analyses.
To this end, a new form of multifractal analysis is introduced, based on the hyperbolic wavelet coefficients, and relating the global and local characterizations of regularity.
It allows to  take into account both  scale invariance properties and local anisotropic features of an image.
Thus, it  provides a new tool for image classification, seen as a refinement of texture classification based on the usual isotropic multifractal analysis, as proposed for instance in~\cite{abry:jaffard:wendt:2012},\cite{jaffard:2004}.
Section~\ref{s:multimulti} is devoted to the introduction of this new framework: A new multifractal formalism, referred to as the  {\it hyperbolic multifractal formalism}.
It allows to relate local anisotropic regularity of the analyzed image to global quantities called hyperbolic structure functions as commonly done in multifractal analysis.
Note that alternative multifractal analysis and multifractal formalism were introduced by H. Ben Braiek and M. Ben Slimane in~\cite{benbraiek;benslimane:2011b}, based on Triebel basis coefficients.
In their approach, a particular anisotropy is picked, and the corresponding  basis is used.
As above, the main difference between our point of view and theirs is that  we do not pick beforehand a particular anisotropy: Therefore, the approach proposed here does not rely on any a priori assumptions on data, and can thus be used when anisotropies of several types are simultaneously present in data.

Finally, detailed proofs of all the results stated in Sections~\ref{s:besov} and~\ref{s:multimulti}  are provided in Section~\ref{s:proofs}.


\section{Anisotropic global regularity and hyperbolic wavelets}\label{s:besov}

We first focus on the measure of anisotropic global regularity using a common analyzing dictionary: hyperbolic wavelet bases.
Here, we start by providing the reader with a brief account of the corresponding functional spaces.
Thereafter, we recall some well-known facts about hyperbolic wavelet analysis (cf. Section~\ref{s:WCBesov}).
The main result of the present section consists of Theorem~\ref{th:WCBesov}, proven in Section~\ref{s:proofBesov}, which allows to determine the critical directional Besov indices of data by regressions on log-log plot of quantities based on hyperbolic wavelet coefficients (see Section~\ref{s:WCBesov} for a precise statement).


\subsection{Anisotropic Besov spaces}

Anisotropic Besov spaces generalize classical (isotropic) Besov
spaces, and many results concerning isotropic spaces have been extended in this setting, see~\cite{bownik:ho:2005,bownik:2005} for a complete account on
the results used in this section, and~\cite{bownik:2003,triebel:2006} for detailed overviews on anisotropic spaces. Note in particular that these spaces are invariant by smooth  diffeomorphisms on each coordinate, an important requirement for image processing.

Anisotropic Besov spaces verify (asymptotically in the limit of small scales) norm invariances with respect to anisotropic scaling, we, therefore, start by recalling this notion.
Let $\alpha =(\alpha_1,\alpha_2)$ denote a fixed couple of parameters, with $\alpha_1, \, \alpha_2 \ge 0$ and $\alpha_1+\alpha_2 =2$.
In the remainder, such couples will be referred to as {\em admisible anisotropies}.
Such couples quantify the degree of anisotropy of the space ($\alpha_1= \alpha_2=1$ corresponding to the isotropic case).
For any $t\ge 0$ and $\xi=(\xi_1,\xi_2) \in \mathbb{R}^2$, we define anisotropic scaling by $t^{\alpha} \xi = (t^{\alpha_1} \xi_1, t^{\alpha_2}\xi_2)$.  Note that, in this definition and in the following, the coordinate axes are chosen as anisotropy directions. This particular choice can of course be modified by the introduction of an additional rotation (as envisaged e.g., in\cite{roux:clausel:vedel:jaffard:abry:2012}).

Anisotropic Besov spaces may be introduced using  an anisotropic Littlewood Paley analysis, which we now recall.
Let $\varphi_0^{\alpha}  \ge 0$ belong to the Schwartz class ${\mathcal{S}}(\mathbb{R}^2)$ and be such that
$$
\varphi_0^{\alpha}(x) = 1 \quad if \quad \sup_{i=1,2} \vert \xi_{i} \vert \le 1\;,
$$
and
$$
\varphi_0^{\alpha}(x)=0 \quad if \quad \sup_{i=1,2} \vert 2^{-\alpha_{i}} \xi \vert \ge 1\;.
$$
For $j \in \mathbb{N}$, we define
$$
\varphi_j^{\alpha}(x) = \varphi_0^{\alpha}(2^{-j \alpha}\xi)-\varphi_0^{\alpha}(2^{-(j-1)\alpha}\xi)\;.
$$
Then,
$$
\sum_{j=0}^{+\infty} \varphi_j^{\alpha}\equiv 1\;,
$$
and $(\varphi_j^{\alpha})_{j\geq 0}$ is called an {\it anisotropic resolution of the unity}. It satisfies
$$
\mathrm{supp}\left(\varphi_0^{\alpha}\right) \subset R_1^{\alpha}, \quad \mathrm{supp}\left(\varphi_k^{\alpha}\right) \subset R_{j+1}^{\alpha} \setminus R_j^{\alpha}\;,
$$
where
$$
R_j^{\alpha} = \lbrace \xi=(\xi_1,\xi_2) \in \mathbb{R}^2;\, \sup_{i=1,2}\vert \xi_{\ell} \vert \le 2^{\alpha_i k}\rbrace\;.
$$

For $f \in \mathcal{S}'(\mathbb{R}^2)$  let
$$
\Delta^ {\alpha}_j f = {\mathcal{F}}^{-1} \left( \varphi^{\alpha}_j \widehat{f} \right)\;.
$$
The sequence $(\Delta^ {\alpha}_j f )_{j \ge 0})$ is called an {\it anisotropic Littlewood--Paley analysis} of $f$.
The anisotropic Besov spaces are then defined as follows (see~\cite{bownik:ho:2005,bownik:2005}).

\begin{definition}
The Besov space $B^{s,\alpha}_{p,q,|\log|^\beta}(\mathbb{R}^2)$, for $0<p \le +\infty$, $0<q\le + \infty$, $s,\beta \in \mathbb{R}$,  is defined by
$$
B^{s,\alpha}_{p,q,|\log|^\beta}(\mathbb{R}^2) = \lbrace f \in \mathcal{S}'(\mathbb{R}^2); \, \left( \sum_{j \ge 0} j^{-\beta q} 2^{jsq} \Vert \Delta^ {\alpha}_j f \Vert_p^q \right)^{1/q} <+\infty \rbrace\;.
$$
This definition does not depend on the resolution of the chosen unity $\varphi_0^{\alpha}$ and the quantity
$$
\Vert f \Vert_{B^{s,\alpha}_{p,q,|\log|^\beta}} = \left( \sum_{j \ge 0} j^{-\beta q} 2^{jsq} \Vert \Delta^ {\alpha}_jf \Vert_p^q \right)^{1/q}\;,
$$
is a norm (resp., quasi-norm) on $B^{s,\alpha}_{p,q}(\mathbb{R}^2)$ for $1 \leq p, \, q \leq +\infty$ (resp., $0<p, \, q <1$).
\end{definition}

As in the isotropic case, anisotropic Besov spaces encompass a large class of classical anisotropic functional spaces (see~\cite{triebel:2006} for details).
For example, when $p=q=2$ and $(\alpha_1,\alpha_2)\in\mathbb{Q}^2$ is an admissible anisotropy, let us consider $s>0$ such that $s/\alpha_1$ and $s/\alpha_2$ are both integers, then the anisotropic Sobolev space
\[
H^{s,\alpha}(\mathbb{R}^2)=\{f \in L^2(\mathbb{R}^2)\mbox{ such that }\frac{\partial^{s/\alpha_1}f}{\partial x_1}\in L^2(\mathbb{R}^2)\mbox{ and }\frac{\partial^{s/\alpha_2}f}{\partial x_2}\in L^2(\mathbb{R}^2)\}\;,
\]
coincides with the Besov space $B_{2,2}^{s,\alpha}(\mathbb{R}^2)$.

In the special case where $p=q=\infty$, the  spaces $B^{s,\alpha}_{\infty,\infty}(\mathbb{R}^2)$ are called anisotropic H\"{o}lder spaces and are denoted $\mathcal{C}^{s,\alpha}_{|\log|^u}(\mathbb{R}^{2})$. These spaces also admit a finite difference characterization that we now recall (see also~\cite{triebel:2006} for details).

Let $(e_1,e_2)$ denote the canonical basis of $\mathbb{R}^2$.
For a function $f :\mathbb{R}^{2}\rightarrow \mathbb{R}$, $\ell\in\{1,2\}$ and $t\in\mathbb{R}$ one defines
\[
\Delta^{1}_{t,\ell}f(x)=f(x+t e_\ell)-f(x)\;.
\]
The difference of order $M$, $M\geq 2$, of function $f$, along direction $e_\ell$, is then iteratively defined as
\[
\Delta^M_{t,\ell}f(x)=\Delta_{t,\ell}\Delta^{M-1}_{t,\ell}f(x)\;.
\]
One then has:
\begin{proposition}
Let $\alpha=(\alpha_1,\alpha_2)\in (\mathbb{R}^+_*)^2$ such that $\alpha_1+\alpha_2=2$, $s>0$, $u\in\mathbb{R}$ and $f: \mathbb{R}^2\rightarrow \mathbb{R}$. The function $f$ belongs to the anisotropic H\"{o}lder space $\mathcal{C}^{s,\alpha}_{|\log|^u}(\mathbb{R}^{2})$ if
\[
\|f\|_{L^{\infty}(\mathbb{R}^{2})}+\sum_{\ell=1}^2\sup_{t>0}\frac{\|\Delta^{M_\ell}_{t,\ell}f(x)\|_{L^{\infty}(\mathbb{R}^{2})}}{|t|^{s/\alpha_\ell}|\log(|t|)|^u}<+\infty\;,
\]
where for any $\ell\in\{1,2\}$, $M_\ell=[s/\alpha_\ell]+1$.
\end{proposition}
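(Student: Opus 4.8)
The plan is to prove the stated sufficient condition by showing that control of the finite differences forces the anisotropic Littlewood--Paley blocks to decay at the rate characterizing $\mathcal{C}^{s,\alpha}_{|\log|^u}=B^{s,\alpha}_{\infty,\infty,|\log|^u}$; that is, I would establish
$$
\Vert \Delta_j^\alpha f\Vert_\infty \leq C\, 2^{-sj}\, j^{u}, \qquad j\geq 1,
$$
together with $\Vert\Delta_0^\alpha f\Vert_\infty\leq C\Vert f\Vert_\infty$, which jointly are exactly the condition $\sup_j j^{-u}2^{js}\Vert\Delta_j^\alpha f\Vert_\infty<+\infty$ defining membership in the space. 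The block $\Delta_0^\alpha f$ is the convolution of $f$ with the fixed integrable kernel $\mathcal{F}^{-1}\varphi_0^\alpha$, so Young's inequality disposes of it using the $\Vert f\Vert_\infty$ term; all the work is in the high-frequency blocks.

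The key geometric observation is that $\mathrm{supp}(\varphi_j^\alpha)\subset R_{j+1}^\alpha\setminus R_j^\alpha$ is $L$-shaped: on it one always has $|\xi_1|\sim 2^{\alpha_1 j}$ or $|\xi_2|\sim 2^{\alpha_2 j}$. I would therefore fix a smooth partition of unity $\chi_1+\chi_2\equiv 1$, taken as anisotropic dilates adapted to scale $j$ and subordinate to this covering, and split $\Delta_j^\alpha f=\sum_{\ell=1}^2 \mathcal{F}^{-1}(\chi_\ell\varphi_j^\alpha\widehat f)$ so that on the $\ell$-th piece $\xi_\ell$ is comparable to $2^{\alpha_\ell j}$. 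On that piece I choose $t=t_{j,\ell}\sim 2^{-\alpha_\ell j}$ so that $t\xi_\ell$ stays in a fixed compact set avoiding $2\pi\mathbb Z$, whence $|\mathrm{e}^{\mathrm{i}t\xi_\ell}-1|$ is bounded below. This lets me factor
$$
\chi_\ell(\xi)\varphi_j^\alpha(\xi)\widehat f(\xi)=\underbrace{\chi_\ell(\xi)\varphi_j^\alpha(\xi)\big(\mathrm{e}^{\mathrm{i}t\xi_\ell}-1\big)^{-M_\ell}}_{=:\,m_{j,\ell}(\xi)}\cdot\big(\mathrm{e}^{\mathrm{i}t\xi_\ell}-1\big)^{M_\ell}\widehat f(\xi),
$$
and since $(\mathrm{e}^{\mathrm{i}t\xi_\ell}-1)^{M_\ell}\widehat f$ is the Fourier transform of $\Delta^{M_\ell}_{t,\ell}f$, the $\ell$-th piece equals $\mathcal{F}^{-1}(m_{j,\ell})*\Delta^{M_\ell}_{t,\ell}f$. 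Young's inequality gives a bound by $\Vert\mathcal{F}^{-1}(m_{j,\ell})\Vert_1\,\Vert\Delta^{M_\ell}_{t,\ell}f\Vert_\infty$, and inserting the hypothesis at this particular $t\sim 2^{-\alpha_\ell j}$ — for which $|t|^{s/\alpha_\ell}\sim 2^{-sj}$ and $|\log|t||\sim j$ — yields the desired $2^{-sj}j^u$, provided the multiplier norms $\Vert\mathcal{F}^{-1}(m_{j,\ell})\Vert_1$ are bounded uniformly in $j$.

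This uniform multiplier bound is the main obstacle and the heart of the argument. I would obtain it by the anisotropic rescaling $\xi_i\mapsto 2^{\alpha_i j}\xi_i$, under which the $L^1$ norm of an inverse Fourier transform is invariant: by construction of the $\varphi_j^\alpha$ as dilates of a fixed profile, of the $\chi_\ell$ as matching dilates, and by the choice $t\sim 2^{-\alpha_\ell j}$, the rescaled $m_{j,\ell}$ becomes \emph{independent} of $j$, supported in a fixed annulus and smooth there since its denominator is bounded away from $0$. It then suffices to bound the $L^1$ norm of the inverse transform of this fixed $C^\infty_c$ multiplier, which follows from standard decay estimates. The one delicate point is that as $\xi_\ell$ ranges over a shell of fixed ratio $\sim[c2^{\alpha_\ell j},C2^{\alpha_\ell j}]$, the phase $t\xi_\ell$ may cross multiples of $2\pi$; I would remove this by a further finite decomposition of the $\ell$-th frequency piece into boundedly many sub-shells (their number independent of $j$ by scaling), on each of which a value $t\sim 2^{-\alpha_\ell j}$ can be chosen so that $t\xi_\ell$ avoids $2\pi\mathbb Z$. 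Summing the finitely many uniformly bounded contributions closes the estimate. I would finally remark that the reverse implication holds by the dual, and more classical, computation — expanding $\Delta^{M_\ell}_{t,\ell}f=\sum_j\Delta^{M_\ell}_{t,\ell}\Delta_j^\alpha f$ and using $|\mathrm{e}^{\mathrm{i}t\xi_\ell}-1|^{M_\ell}\lesssim\min\big(1,(t2^{\alpha_\ell j})^{M_\ell}\big)$ — where the choice $M_\ell=[s/\alpha_\ell]+1>s/\alpha_\ell$ is precisely what makes the low-frequency sum converge at the rate $|t|^{s/\alpha_\ell}|\log|t||^u$.
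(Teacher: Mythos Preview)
The paper does not actually prove this proposition: it is introduced with the sentence ``These spaces also admit a finite difference characterization that we now recall (see also~\cite{triebel:2006} for details)'' and then stated without argument. So there is no proof in the paper to compare against.

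Your approach is correct and is essentially the standard one found in Triebel's books: exploit the L-shaped geometry of $\mathrm{supp}(\varphi_j^\alpha)$ to split into pieces where some $|\xi_\ell|\sim 2^{\alpha_\ell j}$, factor out $(\mathrm{e}^{\mathrm{i}t\xi_\ell}-1)^{M_\ell}$ with $t\sim 2^{-\alpha_\ell j}$ to make the finite difference appear, and control the resulting Fourier multiplier uniformly in $j$ by anisotropic rescaling. The computation $\|\mathcal F^{-1}g\|_1=\|\mathcal F^{-1}h\|_1$ when $g(\xi)=h(A\xi)$ is what makes the rescaling argument go through, and your observation that the rescaled multiplier is a fixed $C_c^\infty$ function is exactly right. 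One minor remark: the further subdivision you propose to avoid $t\xi_\ell\in 2\pi\mathbb Z$ is not really needed here, since on the $\ell$-th piece $|\xi_\ell|\in[2^{\alpha_\ell j},2^{\alpha_\ell(j+1)}]$ with $\alpha_\ell<2$, so choosing $t=c\,2^{-\alpha_\ell j}$ with $c$ small enough confines $t\xi_\ell$ to a fixed interval $[c,2^{\alpha_\ell}c]\subset(0,2\pi)$ on which $|\mathrm{e}^{\mathrm{i}t\xi_\ell}-1|$ is uniformly bounded below. But the extra care does no harm.
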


\subsection{Hyperbolic wavelet characterization of anisotropic Besov spaces}\label{s:WCBesov}

We state our first main result which consists of an hyperbolic wavelet caracterization of anisotropic Besov spaces.

We first need to recall the definition of the hyperbolic wavelet bases as tensorial products of two unidimensional wavelet bases (see~\cite{devore:konyagin:temlyakov:1998}) and second state Theorem~\ref{th:WCBesov}, further proven in Section~\ref{s:proofBesov}.
\begin{definition}
Let $\psi$ denote the unidimensional Meyer wavelet and $\varphi$ the associated scaling function. The hyperbolic wavelet basis is defined as the system $\{\psi_{j_1,j_2,k_1,k_2}, \, (j_1, j_2) \in (\mathbb{Z}^+\cup \{-1\})^2, \, (k_1, k_2) \in \mathbb{Z}^2\}$
where
\begin{itemize}
\item if $j_1,j_2\geq 0$,
$$
\psi_{j_1,j_2,k_1,k_2}(x_1,x_2)= \psi(2^{j_1}x_1-k_1)\psi(2^{j_2}x_2-k_2)\;.
$$
\item if $j_1=-1$ and $j_2\geq 0$
$$
\psi_{-1,j_2,k_1,k_2}(x_1,x_2)= \varphi(x_1-k_1)\psi(2^{j_2}x_2-k_2)\;.
$$
\item if $j_1\ge 0$ and $j_2=-1$
$$
\psi_{j_1,-1,k_1,k_2}(x_1,x_2)= \psi(2^{j_1}x_1-k_1)\varphi(x_2-k_2)\;.
$$
\item if $j_1=j_2=-1$
$$
\psi_{-1,-1,k_1,k_2}(x_1,x_2)= \varphi(x_1-k_1)\varphi(x_2-k_2)\;.
$$
\end{itemize}
For any $f\in\mathcal{S}'(\mathbb{R}^2)$, one then defines its hyperbolic wavelet coefficients as follows:
\begin{eqnarray*}
c_{j_1,j_2,k_1,k_2}&=&2^{j_1+j_2}<f,\psi_{j_1,j_2,k_1,k_2}>\mbox{ if }j_1,j_2\geq 0\;,\\
c_{j_1,-1,k_1,k_2}&=&2^{j_1}<f,\psi_{j_1,j_2,k_1,k_2}>\mbox{ if }j_1\geq 0\mbox{ and }j_2=-1\;,\\
c_{-1,j_2,k_1,k_2}&=&2^{j_2}<f,\psi_{j_1,j_2,k_1,k_2}>\mbox{ if }j_1=-1\mbox{ and }j_2\geq 0\;,\\
c_{-1,-1,k_1,k_2}&=&<f,\psi_{j_1,j_2,k_1,k_2}>\mbox{ if }j_1=j_2=-1\;.
\end{eqnarray*}
\end{definition}

\begin{remark}
We chose a $L^1$-normalization for the wavelet coefficients, known to be well-matching scale invariance.
\end{remark}

The main result of this section is an hyperbolic wavelet characterization of the spaces $B^{s,\alpha}_{p,q,|\log|^\beta}(\mathbb{R}^{2})$, up to a logarithmic correction.
In the sequel, some notations are needed. For any $j=(j_1,j_2)\in (\mathbb{N}\cup\{-1\})^2$, let us define:
\[
\|c_{j_1,j_2,\cdot,\cdot}\|_{\ell^p}=\left(\sum_{(k_1,k_2)\in\mathbb{Z}^2}|c_{j_1,j_2,k_1,k_2}|^p\right)^{1/p}\;.
\]
Let $\alpha=(\alpha_1,\alpha_2)$ be an admissible anisotropy, one also defines the following subsets of $\mathbb{N}^2$
\begin{eqnarray*}
\Gamma_j^{(HL)}(\alpha)&=&\{(j_1,j_2)\in\mathbb{N}^2,\,[(j-1) \alpha_1 ]-1\leq j_1\leq [j\alpha_1]+1\mbox{ and }0\leq j_2\leq [(j-1) \alpha_2 ]-1\},\\
\Gamma_j^{(LH)}(\alpha)&=&\{(j_1,j_2)\in\mathbb{N}^2,\,0\leq j_1\leq [(j-1) \alpha_1 ]-1\mbox{ and }[(j-1) \alpha_2 ]-1\leq j_2\leq [j\alpha_2]+1\},\\
\Gamma_j^{(HH)}(\alpha)&=&\{(j_1,j_2)\in\mathbb{N}^2,\,[(j-1) \alpha_1 ]-1\leq j_1\leq [j\alpha_1]+1\mbox{ and }[(j-1) \alpha_2 ]-1\leq j_2\leq [j\alpha_2]+1\}\;,
\end{eqnarray*}
and
\begin{equation}\label{e:gammaj}
\Gamma_j(\alpha)=\Gamma_j^{(HL)}(\alpha)\cup \Gamma_j^{(LH)}(\alpha)\cup \Gamma_j^{(HH)}(\alpha)\;.
\end{equation}
Let us now state our hyperbolic wavelet characterization of anisotropic Besov spaces:
\begin{theorem}\label{th:WCBesov}
Let $\alpha=(\alpha_1,\alpha_2)$ be an admissible anisotropy, $(s,\beta)\in\mathbb{R}^2$ and $(p,q)\in (0,+\infty]^2$. Let $f\in \mathcal{S}'(\mathbb{R}^2)$.
\begin{enumerate}
\item Set $\beta(p,q)=\max(1/p-1,0)+\max(1-1/q,0)$. If
\[
\left(\sum_{j\in\mathbb{N}_0}j^{q\beta(p,q)-\beta q}2^{jsq}\sum_{(j_1,j_2)\in \Gamma_j(\alpha)}2^{-\frac{(j_1+j_2)q}{p}}\;\|c_{j_1,j_2,\cdot,\cdot}\|_{\ell^p}^q\right)^{1/q}<+\infty\;,
\]
then $f \in B^{s,\alpha}_{p,q,|\log|^\beta}(\mathbb{R}^2)$ (with usual modifications when $q=\infty$).
\item Conversely,
\begin{enumerate}
\item If $q<\infty$ and $f \in B^{s,\alpha}_{p,q,|\log|^\beta}(\mathbb{R}^2)$ then
\[
\left(\sum_{j\in\mathbb{N}_0}j^{-\beta q-1}2^{jsq}\sum_{(j_1,j_2)\in \Gamma_j(\alpha)}2^{-\frac{(j_1+j_2)q}{p}}\;\|c_{j_1,j_2,\cdot,\cdot}\|_{\ell^p}^q\right)^{1/q}<+\infty\;.
\]
\item If $f \in B^{s,\alpha}_{p,\infty,|\log|^\beta}(\mathbb{R}^2)$ then
\[
\max_{j\in\mathbb{N}_0}j^{-\beta }2^{js}\max_{(j_1,j_2)\in \Gamma_j(\alpha)}2^{-\frac{(j_1+j_2)}{p}}\;\|c_{j_1,j_2,\cdot,\cdot}\|_{\ell^p}<+\infty\;.
\]
\end{enumerate}
\end{enumerate}
\end{theorem}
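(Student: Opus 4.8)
The plan is to set up a frequency-localization dictionary between the anisotropic Littlewood--Paley blocks $\Delta^\alpha_j f$ and the hyperbolic wavelet detail blocks, and then transfer estimates through two reproduction identities. Write $D_{j_1,j_2}f$ for the contribution of the scale pair $(j_1,j_2)$ to the wavelet reconstruction, so that $f=\sum_{j_1,j_2}D_{j_1,j_2}f$. Since the Meyer wavelet is band-limited, $\widehat{D_{j_1,j_2}f}$ is supported in a fixed dyadic rectangle $\rho_{j_1,j_2}\simeq\{|\xi_1|\simeq 2^{j_1},\,|\xi_2|\simeq 2^{j_2}\}$, while $\widehat{\Delta^\alpha_j f}$ is supported in the anisotropic annulus $A_j=R^\alpha_{j+1}\setminus R^\alpha_j$ (the few coarse blocks with $j_\ell=-1$ are low-pass and absorbed into the lowest level $j=0$). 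First I would check that $\rho_{j_1,j_2}\cap A_j\neq\emptyset$ precisely when $(j_1,j_2)\in\Gamma_j(\alpha)$: the three pieces $\Gamma^{(HL)}_j,\Gamma^{(LH)}_j,\Gamma^{(HH)}_j$ correspond to the rectangle meeting the annulus along its horizontal, vertical, or corner boundary respectively, and the $\pm1$ shifts in their definitions exactly account for the overlap of neighbouring frequency supports. This produces the identities $\Delta^\alpha_j f=\sum_{(j_1,j_2)\in\Gamma_j(\alpha)}\Delta^\alpha_j D_{j_1,j_2}f$ and, for fixed $(j_1,j_2)$, $D_{j_1,j_2}f=\sum_{j:(j_1,j_2)\in\Gamma_j(\alpha)}D_{j_1,j_2}\Delta^\alpha_j f$, the latter sum running over only $O(1)$ indices $j$.

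The second ingredient is the fixed-scale norm equivalence. Because $\{2^{(j_1+j_2)/2}\psi_{j_1,j_2,k_1,k_2}\}_{k_1,k_2}$ is an $L^p$-stable family of band-limited atoms, the $L^1$-normalization chosen in the Definition gives $\|D_{j_1,j_2}f\|_p\simeq 2^{-(j_1+j_2)/p}\,\|c_{j_1,j_2,\cdot,\cdot}\|_{\ell^p}$, uniformly in $(j_1,j_2)$ and for all $0<p\le\infty$ (the range $p<1$ requiring Plancherel--Polya/Nikolskii inequalities for band-limited functions). Alongside this I would record the uniform quasi-boundedness on $L^p$ of the exchange operators $\Delta^\alpha_j D_{j_1,j_2}$ and $D_{j_1,j_2}\Delta^\alpha_j$, acting on functions band-limited to the relevant rectangle: this follows because $\varphi^\alpha_j$ is a smooth bump obtained from $\varphi^\alpha_0$ by anisotropic rescaling, whose inverse Fourier transform has an $L^1$-quasi-norm independent of $j$ (by the change of variables $\xi\mapsto 2^{j\alpha}\xi$), so that Young's inequality for $p\ge1$ and its band-limited substitute for $p<1$ apply uniformly.

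With these tools the direct implication (item 1) proceeds scale by scale. Applying the triangle inequality when $p\ge1$, and the $p$-triangle inequality when $p<1$, to $\Delta^\alpha_j f=\sum_{\Gamma_j(\alpha)}\Delta^\alpha_j D_{j_1,j_2}f$, followed by the multiplier bound, yields $\|\Delta^\alpha_j f\|_p\lesssim\bigl\|(\|D_{j_1,j_2}f\|_p)_{(j_1,j_2)\in\Gamma_j(\alpha)}\bigr\|_{\ell^{\min(1,p)}}$. Converting this $\ell^{\min(1,p)}$ quasi-norm into the $\ell^q$ quasi-norm of the statement by factoring through $\ell^1$ costs a factor $(\mathrm{card}\,\Gamma_j(\alpha))^{\beta(p,q)}\simeq j^{\beta(p,q)}$, since $\mathrm{card}\,\Gamma_j(\alpha)\simeq j$; here the two summands $\max(1/p-1,0)$ and $\max(1-1/q,0)$ of $\beta(p,q)$ arise exactly from the $\ell^{\min(1,p)}\hookrightarrow\ell^1$ and the $\ell^1\hookrightarrow\ell^q$ Hölder conversions over $\Gamma_j(\alpha)$. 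Substituting the fixed-scale norm equivalence and summing $\sum_j j^{-\beta q}2^{jsq}\|\Delta^\alpha_j f\|_p^q$ then reproduces the hypothesized series, up to the announced $j^{q\beta(p,q)}$ correction; the case $q=\infty$ is identical with suprema replacing the sums.

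For the converse (item 2) I would run the argument in reverse: for fixed $(j_1,j_2)\in\Gamma_j(\alpha)$ the second reproduction identity gives $\|D_{j_1,j_2}f\|_p\lesssim\sum_{j'\simeq j}\|\Delta^\alpha_{j'}f\|_p$ over the $O(1)$ admissible $j'$, and summing over the $\simeq j$ elements of $\Gamma_j(\alpha)$ contributes exactly the extra factor $j$ responsible for the weight $j^{-\beta q-1}$; a reindexing of the shifted $j'$-sum and the comparability $j^{\mp\beta q}\simeq(j\pm1)^{\mp\beta q}$ then close the estimate (with the natural $q=\infty$ variant). I expect the main obstacle to be the uniform, scale-independent control required in the quasi-Banach range $p<1$ within the genuinely anisotropic geometry: one must verify that the Fourier multipliers localized to rectangles of aspect ratio $2^{|j_1-j_2|}$ meeting $A_j$ admit $L^p$ bounds uniform in $j,j_1,j_2$ across the overlap region $\Gamma_j(\alpha)$, and that the $\pm1$ boundary fattening really makes the two reproduction identities exact, leaving no residual frequency content. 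By contrast, the cardinality count $\mathrm{card}\,\Gamma_j(\alpha)\simeq j$ and the $O(1)$ overlap multiplicity are routine once this localization analysis is secured.
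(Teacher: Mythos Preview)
Your proposal is correct and follows essentially the same strategy as the paper: the frequency-localization dictionary you describe is exactly Lemma~4.1, your uniform multiplier bounds correspond to the paper's anisotropic Fourier-multiplier estimate (its equation derived from Triebel's Section~1.5.2), the fixed-scale equivalence $\|D_{j_1,j_2}f\|_p\simeq 2^{-(j_1+j_2)/p}\|c_{j_1,j_2,\cdot,\cdot}\|_{\ell^p}$ is the paper's Proposition~4.3 (proved via a Plancherel--Polya sampling lemma in one direction and a Frazier--Jawerth atomic estimate in the other), and the cardinality bookkeeping producing $j^{\beta(p,q)}$ and $j^{-1}$ is identical. The only organizational difference is that the paper inserts an explicit intermediate layer---the hyperbolic Littlewood--Paley blocks $\Delta_{j_1,j_2}f=\mathcal{F}^{-1}(\phi_{j_1,j_2}\widehat{f})$ built from a tensor resolution of unity---and splits the argument into ``anisotropic LP $\leftrightarrow$ hyperbolic LP'' (Theorem~4.1) followed by ``hyperbolic LP $\leftrightarrow$ wavelet coefficients'' (Proposition~4.3), whereas you go directly from $\Delta^\alpha_j f$ to the wavelet synthesis blocks $D_{j_1,j_2}f$; for Meyer wavelets these two intermediaries coincide up to harmless multipliers, so the difference is cosmetic.
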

\begin{proof}
Theorem~\ref{th:WCBesov} is proven in Section~\ref{s:proofBesov}.
\end{proof}
In particular, for $p=q=\infty$, the following ``almost'' characterization of anisotropic H\"{o}lder spaces by means of hyperbolic wavelets holds:
\begin{proposition}\label{pro:WCGlobal}
Let $\alpha=(\alpha_1,\alpha_2)$ an admissible anisotropy, $(s,\beta)\in\mathbb{R}^2$ and $f\in\mathcal{S}'(\mathbb{R}^2)$.
\begin{enumerate}[(i)]
\item If $f\in\mathcal{C}^{s,\alpha}(\mathbb{R}^{2})$ then there exists some $C>0$ such that for all $j\in \mathbb{N}\cup\{-1\}$ and any $(j_1,j_2)\in \Gamma_j(\alpha)$,
\begin{equation}\label{e:WCGlobal1}
\|c_{j_1,j_2,\cdot,\cdot}\|_{\ell^\infty}\leq C 2^{-js}\;.
\end{equation}
\item Conversely, assume that there exists some $C>0$ such that for all $j\in \mathbb{N}\cup\{-1\}$ and any $(j_1,j_2)\in \Gamma_j(\alpha)$
\begin{equation}\label{e:WCGlobal2}
\|c_{j_1,j_2,\cdot,\cdot}\|_{\ell^\infty}\leq \frac{2^{- js}}{j}\;,
\end{equation}
then $f\in \mathcal{C}^{s,\alpha}(\mathbb{R}^{2})$.
\end{enumerate}
\end{proposition}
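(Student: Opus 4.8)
The plan is to read off Proposition~\ref{pro:WCGlobal} as the endpoint case $p=q=\infty$ of Theorem~\ref{th:WCBesov}, after recording that $\mathcal{C}^{s,\alpha}(\mathbb{R}^2)=B^{s,\alpha}_{\infty,\infty}(\mathbb{R}^2)$, i.e. the case $u=0$ (equivalently $\beta=0$). First I would compute the correction exponent at this endpoint: since $1/p=1/q=0$, one has $\beta(\infty,\infty)=\max(0-1,0)+\max(1-0,0)=0+1=1$. This single value is what produces the logarithmic discrepancy between the necessary and the sufficient conditions, and it accounts for the factor $1/j$ present in~\eqref{e:WCGlobal2} but absent from~\eqref{e:WCGlobal1}.

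For the necessary condition~(i), I would apply part~2(b) of Theorem~\ref{th:WCBesov} with $p=\infty$ and $\beta=0$. There the factor $2^{-(j_1+j_2)/p}$ equals $1$, the $\ell^p$ norm becomes the $\ell^\infty$ norm, and $j^{-\beta}=1$, so the conclusion reduces to $\max_{j}2^{js}\max_{(j_1,j_2)\in\Gamma_j(\alpha)}\|c_{j_1,j_2,\cdot,\cdot}\|_{\ell^\infty}<+\infty$, which is precisely the bound $\|c_{j_1,j_2,\cdot,\cdot}\|_{\ell^\infty}\le C\,2^{-js}$ asserted in~\eqref{e:WCGlobal1}.

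For the sufficient condition~(ii), I would feed the hypothesis~\eqref{e:WCGlobal2} into part~1 of the theorem, again with $p=q=\infty$ and $\beta=0$. Using the usual sup-convention for $q=\infty$, the summability condition of part~1 collapses to $\sup_{j}j^{\beta(\infty,\infty)}2^{js}\max_{(j_1,j_2)\in\Gamma_j(\alpha)}\|c_{j_1,j_2,\cdot,\cdot}\|_{\ell^\infty}=\sup_{j}j\,2^{js}\max_{(j_1,j_2)\in\Gamma_j(\alpha)}\|c_{j_1,j_2,\cdot,\cdot}\|_{\ell^\infty}$. The hypothesis gives $\|c_{j_1,j_2,\cdot,\cdot}\|_{\ell^\infty}\le 2^{-js}/j$, so each term is at most $1$, the supremum is finite, and part~1 of Theorem~\ref{th:WCBesov} yields $f\in B^{s,\alpha}_{\infty,\infty}=\mathcal{C}^{s,\alpha}$. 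The extra factor $j$ stemming from $\beta(\infty,\infty)=1$ is exactly absorbed by the $1/j$ of~\eqref{e:WCGlobal2}.

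The one piece of genuine bookkeeping I expect to handle is the coarse-scale contribution, namely the index $j=-1$ and the wavelet blocks involving the scaling function $\varphi$, which lie outside the range $j\in\mathbb{N}_0$ over which Theorem~\ref{th:WCBesov} sums. These amount to finitely many blocks of low-frequency coefficients: in the forward direction they are dominated by the $\mathcal{C}^{s,\alpha}$ norm of $f$, while in the converse direction they contribute a single bounded term, so in neither case do they affect the stated characterization. I do not anticipate a serious obstacle here: the analytic content resides entirely in Theorem~\ref{th:WCBesov}, and the role of the proposition is merely to extract its $p=q=\infty$ specialization and to make the logarithmic gap between~\eqref{e:WCGlobal1} and~\eqref{e:WCGlobal2} explicit.
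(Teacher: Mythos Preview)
Your proposal is correct and is exactly the approach taken in the paper: Proposition~\ref{pro:WCGlobal} is stated there as an immediate specialization of Theorem~\ref{th:WCBesov} to $p=q=\infty$, $\beta=0$, with no separate proof given. Your computation of $\beta(\infty,\infty)=1$ and the resulting matching of the factor $j$ with the $1/j$ in~\eqref{e:WCGlobal2} is precisely the mechanism the paper intends.
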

In the special case where $p=q=2$, that is if we consider anisotropic Sobolev spaces, there is no logarithmic correction:
\begin{theorem}\label{th:WCBesov2}
Let $\alpha=(\alpha_1,\alpha_2)$ an admissible anisotropy, $s\in\mathbb{R}$. Let $f\in \mathcal{S}'(\mathbb{R}^2)$. The two following assertions are equivalent:
\begin{enumerate}[(i)]
\item $f \in H^{s,\alpha}(\mathbb{R}^2)=B^{s,\alpha}_{2,2}(\mathbb{R}^2)$.
\item
\[
\left(\sum_{j\in\mathbb{N}_0}2^{2js}\sum_{(j_1,j_2)\in\Gamma_j(\alpha)}2^{-(j_1+j_2)}\;\|c_{j_1,j_2,\cdot,\cdot}\|_{\ell^2}^2\right)^{1/2}<+\infty\;.
\]
\end{enumerate}
\end{theorem}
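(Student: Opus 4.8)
The plan is to exploit the Hilbert--space structure available when $p=q=2$: here the hyperbolic Meyer system is \emph{orthonormal}, so Parseval's identity can replace the triangle-inequality estimates that, in the general case of Theorem~\ref{th:WCBesov}, are responsible for the logarithmic loss. Write $P_{j_1,j_2}$ for the orthogonal projection of $L^2(\mathbb{R}^2)$ onto $\overline{\mathrm{span}}\{\psi_{j_1,j_2,k_1,k_2}:(k_1,k_2)\in\mathbb{Z}^2\}$. Since $\{2^{(j_1+j_2)/2}\psi_{j_1,j_2,k_1,k_2}\}$ is an orthonormal basis, the $L^1$-normalization of the coefficients gives $\|P_{j_1,j_2}f\|_{L^2}^2=2^{-(j_1+j_2)}\|c_{j_1,j_2,\cdot,\cdot}\|_{\ell^2}^2$, so that assertion (ii) reads exactly $\sum_{j}2^{2js}\sum_{(j_1,j_2)\in\Gamma_j(\alpha)}\|P_{j_1,j_2}f\|_{L^2}^2<+\infty$. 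On the other side, $f\in B^{s,\alpha}_{2,2}$ means $\sum_{j\geq 0}2^{2js}\|\Delta_j^\alpha f\|_{L^2}^2<+\infty$. The whole proof thus amounts to comparing these two weighted sums.

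First I would set up the frequency-localization dictionary. Because $\widehat{\psi}$ is compactly supported in an annulus and $\widehat{\varphi}$ in a ball (this is where the Meyer construction is used), $\widehat{P_{j_1,j_2}f}$ is supported in a fixed dyadic rectangle $D_{j_1,j_2}\subset\{|\xi_1|\sim 2^{j_1}\}\times\{|\xi_2|\sim 2^{j_2}\}$, with the obvious low-frequency modification when $j_\ell=-1$. Comparing $D_{j_1,j_2}$ with the support of the anisotropic Littlewood--Paley multiplier $\varphi_j^\alpha$, which lives in $R_{j+1}^\alpha\setminus R_j^\alpha$, one checks that $D_{j_1,j_2}\cap\mathrm{supp}\,\varphi_j^\alpha\neq\emptyset$ forces $(j_1,j_2)\in\Gamma_j(\alpha)$; the $\pm1$ in the definition of $\Gamma_j(\alpha)$ precisely absorbs the finite width of the wavelet and Littlewood--Paley annuli. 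Two quantitative consequences of the compact Fourier support are crucial: (a) for fixed $j$ the rectangles $D_{j_1,j_2}$, $(j_1,j_2)\in\Gamma_j(\alpha)$, have \emph{bounded overlap} at every frequency; and (b) each $D_{j_1,j_2}$ meets $\mathrm{supp}\,\varphi_j^\alpha$ for only $O(1)$ values of $j$, all clustered around $J(j_1,j_2):=\max(j_1/\alpha_1,j_2/\alpha_2)$, for which $2^{2js}\approx 2^{2J(j_1,j_2)s}$.

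With this dictionary both inequalities are short. For (ii)$\Rightarrow$(i) I would write $\Delta_j^\alpha f=\sum_{(j_1,j_2)\in\Gamma_j(\alpha)}\Delta_j^\alpha P_{j_1,j_2}f$; the summands have frequency supports in the boundedly overlapping rectangles $D_{j_1,j_2}$, hence are almost orthogonal in $L^2$, giving $\|\Delta_j^\alpha f\|_{L^2}^2\lesssim\sum_{(j_1,j_2)\in\Gamma_j(\alpha)}\|P_{j_1,j_2}f\|_{L^2}^2$ with a constant independent of $j$. For the converse (i)$\Rightarrow$(ii) I would use $\sum_l\varphi_l^\alpha\equiv1$ to write $P_{j_1,j_2}f=\sum_{l:(j_1,j_2)\in\Gamma_l(\alpha)}P_{j_1,j_2}\Delta_l^\alpha f$, a sum of $O(1)$ terms by (b), so that $\|P_{j_1,j_2}f\|_{L^2}^2\lesssim\sum_{l:(j_1,j_2)\in\Gamma_l(\alpha)}\|\Delta_l^\alpha f\|_{L^2}^2$. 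In either direction I then multiply by $2^{2js}$, interchange the order of summation, and invoke (b) to see that both weighted sums are comparable to $\sum_{(j_1,j_2)}2^{2J(j_1,j_2)s}\|P_{j_1,j_2}f\|_{L^2}^2$; this gives the two-sided equivalence. The finitely-indexed coarse-scale contributions (those with $j_\ell=-1$, together with the block $j=0$) carry a bounded amount comparable to $\|\Delta_0^\alpha f\|_{L^2}$ and are handled separately.

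The main obstacle is the frequency-localization step: making rigorous, with explicit constants, that the wavelet rectangles meeting a given anisotropic Littlewood--Paley annulus are exactly those indexed by $\Gamma_j(\alpha)$, and that the overlaps in (a) and (b) are uniformly bounded. This is the only place where the Meyer construction (compactly supported $\widehat{\psi}$) is genuinely used, and it is precisely what lets Parseval and almost-orthogonality replace the lossy triangle inequality of the general case — which is why, for $p=q=2$, the logarithmic correction disappears.
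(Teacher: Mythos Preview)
Your proposal is correct and rests on the same core idea as the paper --- exploiting Plancherel and almost-orthogonality, available only for $p=q=2$, to eliminate the logarithmic loss --- but the execution is organized differently. The paper proceeds in two separate stages: first it proves an exact hyperbolic \emph{Littlewood--Paley} characterization of $B^{s,\alpha}_{2,2}$ (Proposition~\ref{pro:caracLP2}), comparing $\|\Delta_j^\alpha f\|_{L^2}^2$ with $\sum_{(j_1,j_2)\in\Gamma_j(\alpha)}\|\Delta_{j_1,j_2}f\|_{L^2}^2$ via a pointwise multiplier inequality $\varphi_j^\alpha\leq g_j^\alpha\leq \varphi_{j-1}^\alpha+\varphi_j^\alpha+\varphi_{j+1}^\alpha$ (Lemma~\ref{lem:1}) together with an abstract quasi-orthogonality lemma (Lemma~\ref{lem:qo}); then it passes from the Littlewood--Paley blocks $\Delta_{j_1,j_2}f$ to the wavelet coefficients $c_{j_1,j_2,k_1,k_2}$ through the general Frazier--Jawerth sampling machinery of Proposition~\ref{pro:spacesEsp}, which is stated and proved for all $(p,q)$. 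You instead merge the two stages by working directly with the orthogonal wavelet projections $P_{j_1,j_2}f$: the compact Fourier support of the Meyer wavelet lets $P_{j_1,j_2}f$ play the role of $\Delta_{j_1,j_2}f$ in the frequency-localization argument, and Parseval on the orthonormal basis gives the identity $\|P_{j_1,j_2}f\|_{L^2}^2=2^{-(j_1+j_2)}\|c_{j_1,j_2,\cdot,\cdot}\|_{\ell^2}^2$ for free, so the Frazier--Jawerth step is bypassed entirely. Your route is a legitimate and slightly more direct shortcut specific to the Hilbert case; the paper's two-stage organization has the structural advantage of reusing verbatim the LP-to-wavelet equivalence already required for Theorem~\ref{th:WCBesov}.
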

\begin{proof}
Theorem~\ref{th:WCBesov2} is proven in Section~\ref{s:proofBesov}.
\end{proof}

\section{Hyperbolic multifractal analysis}\label{s:multimulti}

We are now interested in the simultaneous analysis of local regularity properties and of anisotropic features of a function.
To that end, we construct a new multifractal analysis, referred to as the hyperbolic multifractal analysis.
Recall that in the isotropic case, the purpose of multifractal analysis is to provide information on the the pointwise singularities of functions.
Multifractal functions are usually such that their local regularity strongly vary from point to point, so that it is not possible to estimate the regularity
index (defined below) of a function at a given point.
Instead, the relevant information consists of the ``sizes'' of the sets of
points where the regularity index takes the same value.
This ``size'' is mathematically formalized as the Hausdorff dimension. The function
that associates the dimension of the set of points sharing the
same regularity index with this index is referred to as the
spectrum of singularities (or multifractal spectrum).
The goal of a { \em  multifractal formalism} is to provide a method that allows to measure the spectrum of
singularities from quantities that can actually be computed on real-world data.
We extend this approach to the anisotropic setting.
Let us first recall that, in the case where the anisotropy of the analyzing space is fixed, this has already been achieved:
See~\cite{benbraiek;benslimane:2011a} for anisotropic pointwise regularity analysis using Triebel bases and~\cite{benbraiek;benslimane:2011b} for the corresponding anisotropic multifractal formalism.
Here, we generalize these two previous works, providing a multifractal analysis which does not rely on the a priori knowledge of the regularity and takes into account all possible anisotropies.
Note that for a fixed anisotropy, both formalims coincide:
Indeed they are derived from wavelet characterizations of the same functional spaces.
Nevertheless, the formalism based on hyperbolic wavelet allows to deal simultaneously with all possible anisotropies, thus providing more useful algorithms for analyzing real-world data.
In addition, the use of hyperbolic wavelet bases offers the possibility to define and synthesize deterministic and stochastic mathematical objects with prescribed anisotropic behavior.

In Section~\ref{s:pointwise}, the different concepts related to pointwise regularity are first recalled.
An hyperbolic wavelet criterion is then devised in Section~\ref{s:WLanis}.
Our main result, Theorem~\ref{th:WL}, is stated in Section~\ref{s:WLanis} and proven in Section~\ref{s:proofs}.
Hyperbolic wavelet analysis is defined in Section~\ref{s:hypmultiform} and the validity of the proposed multifractal formalism is investigated in Theorem~\ref{th:multform}.

\subsection{Anisotropic pointwise regularity and hyperbolic wavelet analysis}\label{s:pointwise}
\subsubsection{Definitions}

Let us start by recalling the usual notion of pointwise regularity (cf.~\cite{jaffard:2004} for a complete review).
\begin{definition}
Let $f$ be in $L^\infty_{loc}(\mathbb{R}^2)$ and $s>0$. The function $f$ belongs to the space $\mathcal{C}^{s}_{|\log|^\beta}(x_0)$ if there exist some $C>0$, $\delta>0$ and $P_{x_0}$ a polynomial with degree less than $s$ such that
\[
\mbox{if }|x-x_0|\leq \delta,\,|f(x)-P_{x_0}(x)|\leq C|x-x_0|^s\cdot |\log(|x-x_0|)|^\beta\;,
\]
where $|\cdot|$ is the usual Euclidean norm of $\mathbb{R}^{2}$.
If $\beta=0$, the space $\mathcal{C}^{s}_{|\log|^0}(x_0)$ is simply denoted $\mathcal{C}^{s}(x_0)$.
\end{definition}

Anisotropic pointwise regularity is further defined as follows.
Let $P$ denote a polynomial of the form:
\[
P(t_1,t_2)=\sum_{(\beta_1,\beta_2)\in \mathbb{N}^2} a_{\beta_1,\beta_2} t_1^{\beta_1}t_2^{\beta_2}\;,
\]
and let $\alpha=(\alpha_1,\alpha_2)$ be an admissible anisotropy.
The $\alpha$--homogeneous degree of the polynomial $P$ is defined as:
\[
d_\alpha(P)=\sup\{\alpha_1 \beta_1+\alpha_2 \beta_2, a_{\beta_1,\beta_2}\neq 0\};.
\]
Finally, for any $t=(t_1,t_2)\in\mathbb{R}^2$, the $\alpha$--homogeneous norm reads:
\[
|t|_\alpha=|t_1|^{1/\alpha_1}+|t_2|^{1/\alpha_2}\;.
\]
We can now define the spaces $\mathcal{C}^{s,\alpha}_{|\log|^\beta}(x_0)$.
\begin{definition}
Let $f\in L^\infty_{loc}(\mathbb{R}^2)$, $\alpha=(\alpha_1,\alpha_2)$ such that $\alpha_1+\alpha_2=2$, $|\cdot|_{\alpha}$, $s>0$ and $\beta\in\mathbb{R}$. The function $f$ belongs to $ \mathcal{C}^{s,\alpha}_{|\log|^\beta}(x_0)$ if there exists some $C>0$, $\delta>0$ and $P_{x_0}$ a polynomial with $\alpha$--homogeneous degree less than $s$ such that
\[
\mbox{if }|x-x_0|_\alpha\leq \delta,\, |f(x)-P_{x_0}(x)|\leq C|x-x_0|_\alpha^s\cdot |\log(|x-x_0|_\alpha)|^\beta\;.
\]
If $\beta=0$, the space $\mathcal{C}^{s,\alpha}_{|\log|^0}(x_0)$ is simply denoted $\mathcal{C}^{s,\alpha}(x_0)$.
\end{definition}
The anisotropic pointwise exponent of a locally bounded function $f$ at $x_0$ can be thus be defined as:
\begin{equation}\label{e:anisoexp}
h_{f,\alpha}(x_0)=\sup\{s,\,f\in\mathcal{C}^{s,\alpha}(x_0)\}\;.
\end{equation}

The reader is referred to~\cite{benbraiek:benslimane:2011a},\cite{jaffard:2010} for more details about the material of this section.


\subsubsection{An hyperbolic wavelet criterion}\label{s:WLanis}

As in the usual anisotropic setting (see~\cite{jaffard:2004}), the anisotropic pointwise H\"older regularity of a function is closely related to  the decay rate of decay of its wavelet leaders.
The usual definition of wavelet leaders needs to be tuned to to the hyperbolic setting:

For any $(j_1,j_2,k_1,k_2)$, let $\lambda(j_1,j_2,k_1,k_2)$ denote the hyperbolic dyadic cube:
\begin{equation}\label{e:dyadiccube}
\lambda=\lambda(j_1,j_2,k_1,k_2)=[\frac{k_1}{2^{j_1}},\frac{k_1+1}{2^{j_1}}[\times [\frac{k_2}{2^{j_2}},\frac{k_2+1}{2^{j_2}}[\;,
\end{equation}
and let $c_\lambda$  stand for $c_{j_1,j_2,k_1,k_2}$. 
The hyperbolic wavelet leaders $d_\lambda$, associated with the hyperbolic cube $\lambda$, can now be defined as:
\[
d_{\lambda}=\sup_{\lambda'\subset \lambda}|c_{\lambda'}|\;.
\]
For any $x_0=(a,b) \in {\mathbb{R}}^2$, let
\[
3\lambda_{j_1,j_2}(x_0)=[\frac{[2^{j_1}a]-1}{2^{j_1}},\frac{[2^{j_1}a]+2}{2^{j_1}}[\times [\frac{[2^{j_2}b]-1}{2^{j_2}},\frac{[2^{j_2}b]+2}{2^{j_2}}[\;,
\]
(where $[\cdot]$ denotes the integer part) and
\[
d_{j_1,j_2}(x_0)=\sup_{\lambda'\subset 3\lambda_{j_1,j_2}(x_0)}|c_{\lambda'}|\;.
\]
The hyperbolic wavelet leaders criterion for pointwise regularity can now be stated as:
\begin{theorem}\label{th:WL}
Let $s>0$ and $\alpha=(\alpha_1,\alpha_2)\in (\mathbb{R}^{*}_+)^2$ such that $\alpha_1+\alpha_2=2$.
\begin{enumerate}
\item Assume that $f\in \mathcal{C}^{s,\alpha}(x_0)$. There exists some $C>0$ such that for any $j_{1},j_2\in \mathbb{N}\cup \{-1\}$ one has
\begin{equation}\label{e:WL}
|d_{j_1,j_2}(x_0)|\leq C2^{-\max(\frac{j_1}{\alpha_1},\frac{j_2}{\alpha_2})s}\;.
\end{equation}
\item Conversely, assume that $f$ is uniformly H\"{o}lder and that~(\ref{e:WL}) holds, then $f\in \mathcal{C}^{s,\alpha}_{|\log|^2}(x_0)$.
\end{enumerate}
\end{theorem}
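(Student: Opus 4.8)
The plan is to treat the two implications separately and, in both, to route the anisotropic pointwise information through the two one-dimensional factors of the tensor product, exploiting that the Meyer wavelet $\psi$ is Schwartz with infinitely many vanishing moments while $\varphi$ reproduces low-degree polynomials. For the direct implication I would first reduce the leader estimate (\ref{e:WL}) to a single-coefficient estimate. Any hyperbolic cube $\lambda'\subset 3\lambda_{j_1,j_2}(x_0)$ has scales $j_1'\ge j_1$ and $j_2'\ge j_2$, and since $j\mapsto j/\alpha_\ell$ is nondecreasing one has $\max(j_1'/\alpha_1,j_2'/\alpha_2)\ge \max(j_1/\alpha_1,j_2/\alpha_2)$. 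Hence it suffices to prove, for every cube near $x_0$,
\[
|c_{j_1,j_2,k_1,k_2}|\le C\,2^{-\max(j_1/\alpha_1,\,j_2/\alpha_2)s}\;,
\]
and then take the supremum over subcubes, the right-hand side being largest at the coarsest admissible scale $(j_1,j_2)$, so that $d_{j_1,j_2}(x_0)$ inherits exactly that bound.

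The heart of the matter is this per-coefficient estimate. I would write $c_{j_1,j_2,k_1,k_2}=2^{j_1+j_2}\langle f-P_{x_0},\psi_{j_1,j_2,k_1,k_2}\rangle$, which is licit for $j_1,j_2\ge 0$ because $\psi$ annihilates polynomials; the contribution of $P_{x_0}$ (of $\alpha$-homogeneous degree $<s$) to the low-frequency indices $j_\ell=-1$ is handled separately and only affects the regime where $\max(j_1/\alpha_1,j_2/\alpha_2)\le 0$, in which the claimed bound is trivial. Splitting the integration domain into the anisotropic ball $\{|x-x_0|_\alpha\le \delta\}$, where $|f-P_{x_0}|\le C|x-x_0|_\alpha^s$, and its complement, the far region is controlled by the Schwartz decay of $\psi$ against the temperate growth of $f-P_{x_0}$ and contributes $O(2^{-jN})$ for arbitrary $N$. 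On the near region I would integrate successively in each variable and use the vanishing moments of both factors: this is precisely what upgrades the crude $L^1$-normalized bound $(2^{-j_1/\alpha_1}+2^{-j_2/\alpha_2})^s\approx 2^{-\min(j_1/\alpha_1,j_2/\alpha_2)s}$ to the sharper $2^{-\max(j_1/\alpha_1,j_2/\alpha_2)s}$. Indeed, along the ``fatter'' direction (the one realizing $\min_\ell j_\ell/\alpha_\ell$) the slowly varying part of $|x-x_0|_\alpha^s$ is effectively polynomial and is suppressed by the moments, so that only the faster-decaying ``thinner'' direction survives and fixes the exponent $\max(j_1/\alpha_1,j_2/\alpha_2)$. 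I expect this anisotropic cancellation, carried out with the correct anisotropic Taylor polynomial and with uniform control of the remainder, to be the main technical obstacle. The degenerate test functions $|x_1-a|^{s/\alpha_1}$ and $|x_2-b|^{s/\alpha_2}$, whose coefficients are supported on $j_2=-1$ and $j_1=-1$ and decay exactly as $2^{-j_1 s/\alpha_1}$ and $2^{-j_2 s/\alpha_2}$ respectively, confirm that $\max$ is the right exponent.

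For the converse I would reconstruct $f-P_{x_0}$ from its hyperbolic wavelet series and estimate $|f(x)-P_{x_0}(x)|$ for $x$ with $|x-x_0|_\alpha=r$ small. The uniform H\"older hypothesis is indispensable (pointwise regularity cannot be read off coefficients alone): it guarantees convergence of the series and supplies a global coefficient bound that tames both the very high hyperbolic frequencies and the cubes spatially far from $x$. Writing the reconstruction $f-P_{x_0}=\sum_{j_1,j_2}\sum_{k_1,k_2}2^{-j_1-j_2}c_{j_1,j_2,k_1,k_2}\,\psi_{j_1,j_2,k_1,k_2}$, I would split each scale contribution according to whether the governing anisotropic scale $\max(j_1/\alpha_1,j_2/\alpha_2)$ is smaller or larger than $\log_2(1/r)$, bounding the cubes close to $x$ by the leader estimate (\ref{e:WL}) and the remaining ones by the uniform H\"older bound, and then summing the two resulting geometric series.

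The logarithmic loss $|\log|^2$ is intrinsic to the two-parameter scale index. At a fixed anisotropic scale $2^{-J}\approx r$, the set of hyperbolic scales $(j_1,j_2)$ with $\max(j_1/\alpha_1,j_2/\alpha_2)\approx J$ is essentially the boundary of the rectangle $\{j_1\le \alpha_1 J,\ j_2\le \alpha_2 J\}$ and carries $O(J)$ pairs, producing one factor $J\approx|\log r|$; a second such factor appears when these contributions are summed over $J$, whence $|\log r|^2$ and $f\in\mathcal{C}^{s,\alpha}_{|\log|^2}(x_0)$. Beyond the sharp per-coefficient estimate of the direct part, I expect the bookkeeping of this double sum over the hyperbolic grid — isolating the critical regime and showing that exactly two logarithmic factors, and no more, are lost — to be the other delicate point of the argument.
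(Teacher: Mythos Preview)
Your overall architecture is close to the paper's, but the reduction you propose for the direct implication has a real gap. You claim it is enough to show that every cube ``near $x_0$'' satisfies $|c_{j_1',j_2',k_1',k_2'}|\le C\,2^{-\max(j_1'/\alpha_1,j_2'/\alpha_2)s}$ and then pass to the supremum. The difficulty is that a subcube $\lambda'\subset 3\lambda_{j_1,j_2}(x_0)$ at much finer scales $j_1'\gg j_1$, $j_2'\gg j_2$ need \emph{not} be near $x_0$ at its own scale: its center can sit at distance $\sim 2^{-j_1}$ from $a$ in the first coordinate, which is huge compared with $2^{-j_1'}$. For such a cube the bound $C\,2^{-\max(j_1'/\alpha_1,j_2'/\alpha_2)s}$ simply does not follow from the H\"older hypothesis, and your heuristic that ``the slowly varying part of $|x-x_0|_\alpha^s$ is effectively polynomial'' does not rescue it (that quantity is not a polynomial in the fat variable unless $s/\alpha_\ell\in\mathbb{N}$). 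The paper avoids this by first proving a genuine \emph{two--microlocal} estimate on every coefficient,
\[
|c_{j_1,j_2,k_1,k_2}|\le C\,\min\!\Big(2^{-j_1 s/\alpha_1}+\Big|\tfrac{k_1}{2^{j_1}}-a\Big|^{s/\alpha_1},\ 2^{-j_2 s/\alpha_2}+\Big|\tfrac{k_2}{2^{j_2}}-b\Big|^{s/\alpha_2}\Big),
\]
obtained by using the vanishing moments in \emph{one} direction at a time to get each of the two one--variable bounds separately, and then taking the smaller. This estimate carries the spatial displacement explicitly, so that for any $\lambda'\subset 3\lambda_{j_1,j_2}(x_0)$ each term on the right is $\lesssim 2^{-j_\ell s/\alpha_\ell}$ (coarse scale), and the leader bound (\ref{e:WL}) follows. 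This two--microlocal intermediate step is the missing ingredient in your direct part.

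For the converse you are on the right track, and your accounting of the two logarithmic losses is correct. The paper's execution differs in two respects worth noting. First, $P_{x_0}$ is not taken for granted: it is built as $\sum_j P_{j,x_0}$ where $P_{j,x_0}$ is the anisotropic Taylor polynomial at $x_0$ of the block $f_j=\sum_{(j_1,j_2)\in\Gamma_j(\alpha)}\sum_k c_\lambda\psi_\lambda$, and the convergence of the relevant $\partial^\beta f_j(x_0)$ for $\alpha_1\beta_1+\alpha_2\beta_2\le s$ has to be checked. Second, the low--frequency part ($j\le J$ with $2^{-J}\sim|x-x_0|_\alpha$) is controlled via an \emph{anisotropic} Taylor inequality (Calder\'on--Torchinsky / Folland--Stein), not the isotropic one; this is what makes the homogeneous degree $\alpha_1\beta_1+\alpha_2\beta_2$ appear and match the decay $2^{-js}$. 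Your split ``below/above $\log_2(1/r)$'' corresponds to the paper's $j\le J$ versus $j>J$, but the paper further splits the high frequencies at a second threshold $J_1\sim sJ/\varepsilon_0$ so that on $J<j\le J_1$ the two--microlocal bound is used (producing the $J^2$), while on $j>J_1$ only the uniform H\"older bound is used; you should expect to need the same refinement.
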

Proofs are postponed to Section 4.

\subsection{Anisotropic multifractal analysis}
\subsubsection{Two notions of dimension}
In multifractal analysis, two different notions of dimension
are mainly used: the Hausdorff dimension and the packing dimension, whose definitions are now recalled.

The Hausdorff dimension is defined through the Hausdorff measure (see~\cite{falconer:1990} for details).
The best covering of a set $E\subset \mathbb{R}^d$ with sets subordinated to a diameter $\varepsilon$ can be estimated as follows,
\[
\mathcal{H}_\varepsilon^\delta(E)=\inf\{\sum_{i=1}^\infty |E_i|^\delta,\,E\subset \bigcup_{i=1}^\infty E_i, |E_i|\leq \varepsilon\}.
\]
Clearly, $\mathcal{H}_\varepsilon^\delta$ is an outer measure. The Hausdorff  measure is defined  as the (possibly infinite or vanishing) limit $\mathcal{H}_\epsilon^\delta$ as $\varepsilon$ goes to $0$.



The Hausdorff measure is decreasing as $\delta$ goes to infinity. Moreover, $\mathcal{H}^\delta(E)>0$ implies $\mathcal{H}^{\delta'}(E)=\infty$ if $\delta'<\delta$. The following definition is thus meaningful.
\begin{definition}
The Hausdorff dimension $\mathrm{dim}_H(E)$ of a set $E\subset \mathbb{R}^d$ is defined as follows,
\[
\mathrm{dim}_H(E)=\sup\{\delta:\mathcal{H}^\delta(E)=\infty\}\;.
\]
\end{definition}
With this definition, $\mathrm{dim}_H(\emptyset)=-\infty$.

The box dimension (or Minkowski dimension) is simpler to define and to use than the Hausdorff dimension.
\begin{definition}
Let $E\subset \mathbb{R}^d$. If $\varepsilon>0$, let $N_\varepsilon(E)$ be the smallest number of sets of radius $\varepsilon$ required to cover $E$. The upper box dimension is
\[
\overline{\mathrm{dim}}_B(E)=\limsup_{\varepsilon\to 0} \frac{\log N_\varepsilon(E)}{-\log \varepsilon}.
\]
The lower box dimension is
\[
\underline{\mathrm{dim}}_B(E)=\liminf_{\varepsilon\to 0} \frac{\log N_\varepsilon(E)}{-\log \varepsilon}.
\]
If these two quantities are equal, they define the box dimension $\mathrm{dim}_B(E)$ of $E$.
\end{definition}
A significant limitation of box dimension 
 is that a set and its closure have the same dimension.
The packing dimension (introduced by Tricot, see~\cite{tricot:1991}) has better mathematical properties (e.g.,\ the packing dimension of a countable  union of sets is the supremum of their dimensions).
\begin{definition}
The packing dimension $\mathrm{dim}_P(E)$ of a set $E\subset \mathbb{R}^d$ is defined by
\[
\mathrm{dim}_P(E)=\inf\{\sup_i
\{\overline{\mathrm{dim}}_B(E_i)\}:E\subset\bigcup_{i=1}^\infty E_i\}.
\]
\end{definition}
The following inequality holds for any set $E\subset \mathbb{R}^d$,
\[
\mathrm{dim}_H(E)\leq\mathrm{dim}_P(E).
\]

We now define the hyperbolic spectrum of singularities of a locally bounded function using the Hausdorff dimension.
\begin{definition}
Let $f$ be a locally bounded function and $\alpha=(\alpha_1,\alpha_2)\in (\mathbb{R}_{+}^*)^2$ such that $\alpha_1+\alpha_2=2$. The iso--anisotropic--H\"older set are defined as
\[
E_f(H,\alpha)=\{x\in\mathbb{R}^2,h_{f,\alpha}(x)=H\}
\]
where the anisotropic pointwise H\"older $h_{f,\alpha}(x)$ has been defined in~(\ref{e:anisoexp}).

The hyperbolic spectrum of singularities of $f$ is then defined as:
\[
d: (\mathbb{R}^+ \cup \{\infty\})\times (0,2) \to \mathbb{R}^+ \cup \{-\infty\}
\quad (H,a)\mapsto\mathrm{dim}_H(E_f(H,(a,2-a)))\;.
\]
\end{definition}

\subsubsection{The hyperbolic wavelet leader multifractal formalism}
\label{s:hypmultiform}

It is not always possible to compute theoretically the spectrum of
singularities of a given function.
A multifractal formalism thus consists of a practical procedure that yields (a convex hull of) the function $d$, through the construction of structure functions and the use of the Legendre transform.
In the classical case, these formalisms are variants of a seminal derivation, proposed by Parisi and Frisch in the context of the study of hydrodynamic turbulence~\cite{parisi:frisch:1985}.
The hyperbolic wavelet leader multifractal formalism described below aimed at extending the procedure to where both anisotropy and singularities are studied jointly.

Let us define hyperbolic partition functions of a locally bounded function as:
\begin{equation}\label{eq:sf}
S(j,p,\alpha)=2^{-2j}\sum_{(j_1,j_2)\in \Gamma_j(\alpha)} \sum_{(k_1,k_2)\in \mathbb{Z}^2}d_{j_1,j_2,k_1,k_2}^p\;,
\end{equation}
where $\Gamma_j(\alpha)$ has already been defined in Section~\ref{s:WCBesov} with Eq. (\ref{e:gammaj}).

From the definition of an anisotropic scaling function (or scaling exponents)
\begin{equation}\label{eq:omega}
\omega_f(p,\alpha)=\liminf_{j\to\infty} \frac{\log S(j,p,\alpha)}{\log 2^{-j}},
\end{equation}
let us further define the { \em Legendre hyperbolic spectrum}:
\begin{equation}\label{eq:smf:leg}
{ \mathcal L}_f (H, \alpha) = \inf_{p\in\mathbb{R}^*}\{H p-\omega_f(p,(\alpha,2-\alpha))+2\}\;.
\end{equation}

Qualitatively, the Legendre hyperbolic spectrum and the hyperbolic spectrum of singularities $d_f(H,a)$ are expected to coincide, while
the theorem below actually provides an upper bound relationship.
\begin{theorem}\label{th:multform}
Let
$f$ a uniform H\"{o}lder function. The following inequality holds
\begin{equation}\label{eq:majospectr}
\forall (H,a)\in (\mathbb{R}^{*}_+)\times (0,2),\quad d_f(H,a)\leq { \mathcal L}_f (H, a).  
\end{equation}
\end{theorem}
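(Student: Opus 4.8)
The plan is to establish, for every fixed $p\in\mathbb{R}^*$, the one-sided bound $d_f(H,a)\leq Hp-\omega_f(p,(a,2-a))+2$ and then take the infimum over $p$; this is the ``easy half'' of a multifractal formalism, which holds unconditionally because it only requires an \emph{upper} estimate on the size of the singularity sets. Throughout I write $\alpha=(a,2-a)$ and combine the two ingredients already at hand: the hyperbolic wavelet leader criterion (Theorem~\ref{th:WL}), which converts the local condition $f\in\mathcal{C}^{s,\alpha}(x_0)$ into a decay rate of the leaders $d_{j_1,j_2}(x_0)$, and the partition function $S(j,p,\alpha)$ of~(\ref{eq:sf}), which globally counts the leaders $d_\lambda$ with $(j_1,j_2)\in\Gamma_j(\alpha)$. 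The geometric fact underlying everything is that $\Gamma_j(\alpha)$ is exactly the set of frequency indices at anisotropic scale $j$, in the sense that $\max(j_1/\alpha_1,j_2/\alpha_2)=j+O(1)$ uniformly on $\Gamma_j(\alpha)$; this is what makes the exponent $2^{-j\max(j_1/\alpha_1,j_2/\alpha_2)s}$ of Theorem~\ref{th:WL} comparable to the single scale $2^{-js}$.

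First I would show that singular points carry large leaders. Fix $x_0\in E_f(H,a)$ and $s>H$. Since $h_{f,\alpha}(x_0)=H$, one has $f\notin\mathcal{C}^{s,\alpha}_{|\log|^2}(x_0)$ (if it did belong, the logarithmic space being squeezed between $\mathcal{C}^{s,\alpha}$ and $\mathcal{C}^{s-\varepsilon,\alpha}$ would force $h_{f,\alpha}(x_0)\geq s>H$), so the contrapositive of part~2 of Theorem~\ref{th:WL} yields infinitely many $j$ for which some $(j_1,j_2)\in\Gamma_j(\alpha)$ and some scale-$(j_1,j_2)$ cube $\lambda$ with $x_0\in 3\lambda$ satisfy $d_\lambda\geq 2^{-js}$; here I use that $3\lambda_{j_1,j_2}(x_0)$ is the union of nine dyadic cubes at scale $(j_1,j_2)$, so $d_{j_1,j_2}(x_0)$ is the maximum of their leaders. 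Next I would count such cubes. For $p>0$, writing $A_j(s)=\{\lambda : (j_1,j_2)\in\Gamma_j(\alpha),\ d_\lambda\geq 2^{-js}\}$, the elementary inequality $\#A_j(s)\,2^{-jsp}\leq\sum_{(j_1,j_2)\in\Gamma_j(\alpha)}\sum_{k}d_\lambda^{\,p}=2^{2j}S(j,p,\alpha)$, together with the definition~(\ref{eq:omega}) of $\omega_f$ as a liminf (so that $S(j,p,\alpha)\leq 2^{-j(\omega_f(p,\alpha)-\varepsilon)}$ for all large $j$), gives $\#A_j(s)\leq 2^{j(2-\omega_f(p,\alpha)+sp+\varepsilon)}$.

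It remains to convert this count into a dimension bound. Every $x_0\in E_f(H,a)$ lies in $3\lambda$ for some $\lambda\in A_j(s)$ for infinitely many $j$, hence $E_f(H,a)\subset\bigcap_{J}\bigcup_{j\geq J}\bigcup_{\lambda\in A_j(s)}3\lambda$, a natural $\limsup$ cover. Estimating the associated $\delta$-dimensional Hausdorff content by $\sum_{j\geq J}\#A_j(s)\,(2^{-j})^{\delta}\leq\sum_{j\geq J}2^{j(2-\omega_f(p,\alpha)+sp+\varepsilon-\delta)}$, which tends to $0$ as $J\to\infty$ as soon as $\delta>2-\omega_f(p,\alpha)+sp+\varepsilon$, one concludes $d_f(H,a)\leq 2-\omega_f(p,\alpha)+sp$; letting $s\downarrow H$, $\varepsilon\downarrow 0$ and taking the infimum over $p>0$ yields the claim on the increasing branch. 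The case $p<0$ is symmetric: one thresholds the leaders from above (points where $h_{f,\alpha}\geq H$ satisfy $d_\lambda^{\,p}\geq 2^{-jsp}$ for $s<H$) and covers $\{x:h_{f,\alpha}(x)\geq H\}$.

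The main obstacle is precisely the covering step, because the cubes collected in $A_j(s)$ do \emph{not} all have anisotropic diameter $2^{-j}$: only the ``diagonal'' family $\Gamma_j^{(HH)}(\alpha)$ produces genuine anisotropic cubes of diameter $\asymp 2^{-j}$, for which the cardinality $2^{2j}$ (reflecting $\alpha_1+\alpha_2=2$) yields the decisive ``$+2$'', whereas the mixed families $\Gamma_j^{(HL)}(\alpha)$ and $\Gamma_j^{(LH)}(\alpha)$ consist of elongated cubes whose diameter is governed by $2^{-\min(j_1/\alpha_1,j_2/\alpha_2)}$ and may be as large as $O(1)$. The delicate point is therefore to organise the $\limsup$ cover so that these elongated cubes are regrouped or refined to the correct anisotropic scale without inflating either the cardinality estimate or the Hausdorff content, i.e.\ to verify that the normalisation $2^{-2j}$ in~(\ref{eq:sf}) already absorbs the mixed contributions. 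Two further points must be handled along the way: the logarithmic discrepancy between $\mathcal{C}^{s,\alpha}$ and $\mathcal{C}^{s,\alpha}_{|\log|^2}$ in Theorem~\ref{th:WL}, which is harmless since it perturbs $s$ by an arbitrarily small amount and so does not affect dimensions, and the observation that the relevant Hausdorff dimension is the one attached to the anisotropic metric $|\cdot|_\alpha$, for which the covering by scale-$j$ anisotropic cubes of diameter $2^{-j}$ is the correct one.
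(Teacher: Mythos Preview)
Your approach is the paper's: split into a lemma for $p>0$ bounding $\dim_H\{x:f\notin\mathcal{C}^{H,\alpha}_{|\log|^2}(x)\}$ via a $\limsup$ cover by the cubes $3\lambda$ with $d_\lambda\geq 2^{-jH}$, $(j_1,j_2)\in\Gamma_j(\alpha)$, and a companion lemma for $p<0$, then conclude from $E_f(H,\alpha)\subset\bigl(\cap_{H'>H}G(H',\alpha)\bigr)\cap\bigl(\cup_{H'<H}B(H',\alpha)\bigr)$. The paper itself gives no more detail than this, deferring everything to the isotropic argument of Jaffard (2004, Propositions~7--8) with the sets $G_{j,H}$ replaced by $G(j,H,\alpha)$.

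Two corrections. First, the $p<0$ case is \emph{not} symmetric in the way you sketch: if $h_{f,\alpha}(x)=H$ then by part~1 of Theorem~\ref{th:WL} the leaders are small for \emph{every} $(j_1,j_2)$, not just infinitely many, so there is no $\limsup$ cover and the Hausdorff content argument does not apply directly. The paper (following Jaffard) instead bounds the upper box dimension of $B(H,\alpha)$ on each piece of a countable cover, hence the packing dimension, and then invokes $\dim_H\leq\dim_P$; this is the content of its Lemma covering $p<0$. Second, the Hausdorff dimension in the paper is the ordinary Euclidean one, not the anisotropic one you invoke in your last sentence; your closing remark is therefore at odds with the paper's own definitions. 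Your concern about the elongated cubes in $\Gamma_j^{(HL)}\cup\Gamma_j^{(LH)}$ is well taken, and is exactly the place where the paper's ``same as the isotropic case'' is not fully innocent either.
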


\begin{definition} Let $f$ a uniform H\"{o}lder function, $H>0$ and $a\in (0,2)$. If
Eq. (\ref{eq:majospectr}) simplifies into an equality, i.e.,
\[
\forall (H,a)\in (\mathbb{R}^{*}_+)\times (0,2),\quad d(H,a)= { \mathcal L}_f (H, \alpha) , 
\]
then $f$ satisfies  the hyperbolic multifractal formalism.
\end{definition}

From an applied perspective, Eqs. (\ref{eq:sf}) , (\ref{eq:omega}) and  (\ref{eq:smf:leg}) constitute the core of the practical procedure enabling to compute the Legendre hyperbolic spectrum from the hyperbolic wavelet leaders computed on the data to be analyzed.
Practical implementation show preliminary satisfactory results, notably, for isotropic function, it is clearly observed that the measured $ { \mathcal L}_f (H, \alpha) $ does not depend on $\alpha$.

\section{Proofs}\label{s:proofs}
\subsection{Proof of Theorem~\ref{th:WCBesov}}\label{s:proofBesov}
\subsubsection{Hyperbolic Littlewood-Paley characterization of $B^{s,\alpha}_{p,q}(\mathbb{R}^2)$}
\label{sec:p1}
Let $\theta_0 \in \mathcal{S}(\mathbb{R},\mathbb{R}^+)$ be supported on $[-2,2]$ such that $\theta_0 =1$ on $[-1,1]$. For any $j \in \mathbb{N}$, let us define
$$
\theta_j = \theta_0(2^j \cdot) - \theta_0(2^{j-1} \cdot)\;.
$$
such that $\sum_{j \ge 0} \theta_j(\cdot) =1$ is a 1--D resolution of the unity.

Observe that, for any $j \ge 1$, $\mathrm{supp}\left(\theta_j\right) \subset \{ 2^{j-1} \le \vert \xi \vert \le 2^{j+1} \}$.

\begin{remark}
In the following, the function $\theta_0$ can be chosen with an arbitrary compact support. It does not change the main results even if technical details of proofs and lemmas have to be adapted. It allows to chose the Fourier transform of a Meyer scaling function for $\theta_0$.
\end{remark}

\begin{definition}
\,
\begin{enumerate}
\item For any $j, \, \ell \ge 0$, and any $\xi=(\xi_1,\xi_2) \in \mathbb{R}^2$ set
$$
\phi_{j_1,j_2}(\xi_1, \xi_2)= \theta_{j_1}(\xi_1) \theta_{j_2}(\xi_2)\;.
$$
For any $j_1, \, j_2 \ge 0$, the function $\phi_{j_1,j_2}$ belongs to $\mathcal{S}(\mathbb{R}^2)$ and is compactly supported on $\{2^{\ell_1} \le \vert \xi_1 \vert \le 2^{\ell_1+1} \} \times \{ 2^{\ell_2} \le \vert \xi_2 \vert \le 2^{\ell_2+1}]$. Further $\sum_{j_1 \ge 0} \sum_{j_2 \ge 0} \phi_{j_1,j_2} = 1$ and $(\phi_{j_1,j_2})_{(j_1,j_2)\in\mathbb{N}^2}$ is called an hyperbolic resolution of the unity.
\item For $f \in \mathcal{S}'(\mathbb{R}^2)$ and $j_1, \, j_2 \ge 0$ set
$$
\Delta_{j_1,j_2} f =  {\mathcal{F}}^{-1} \left( \phi_{j_1,j_2} \widehat{f} \right)\;.
$$
The sequence $((\Delta_{j_1,j_2} f)_{j_1,j_2 \ge 0})$ is called an hyperbolic Littlewood--Paley analysis of $f$.
\end{enumerate}
\end{definition}

In the remainder of the section, we are given $\alpha=(\alpha_1,\alpha_2)$ a fixed anisotropy satisfying $\alpha_1+\alpha_2=2$ and $(\varphi^{\alpha}_j)_{j \ge 0}$ an anisotropic resolution of the unity. One then defines the following functions for any $j\geq 0$,
\begin{equation}\label{def:gj}
g_j^\alpha=\sum_{j_1,j_2\in \Gamma_j(\alpha)}\phi_{j_1,j_2}\;,
\end{equation}
where the sets $\Gamma_j(\alpha)$ have been defined in~(\ref{e:gammaj}).

\begin{remark}
Hyperbolic Littlewood--Paley analysis is used in the definition of spaces of mixed smoothness. We refer to~\cite{schmeisser:triebel:2004}  for a study of these spaces and to~\cite{sickel:ullrich:2009} for their link with tensor products of Besov spaces and their hyperbolic wavelet characterizations.
\end{remark}
We now provide the reader with the following hyperbolic Littlewood--Payley characterization of anisotropic Besov spaces:
\begin{theorem}\label{th:LPChar} Let $s\in\mathbb{R}$ and $(p,q)\in (0,+\infty]^2$.
\begin{enumerate}
\item
\begin{enumerate}
\item\label{condi} If $q<\infty$ and
\begin{equation}\label{e:condi}
\left(\sum_{j \ge 0} j^{q\max(1/p-1,0)+\max(q-1,0)}\cdot j^{-\beta q}2^{jsq}   \sum_{(j_1,j_2)\in \Gamma_j(\alpha)} \left\Vert \Delta_{j_1,j_2}(f)   \right\Vert_p^q \right)^{1/q}  <+\infty\;,
\end{equation}
 then $f \in B^{s,\alpha}_{p,q,|\log|^\beta}(\mathbb{R}^2)$.
 \item\label{condibis} If
 \begin{equation}\label{e:conditer}
\max_{j \ge 0} \left(j^{\max(1/p-1,0)+1}\cdot j^{-\beta}2^{js}   \max_{(j_1,j_2)\in \Gamma_j(\alpha)} \left\Vert \Delta_{j_1,j_2}(f)   \right\Vert_p \right)<+\infty\;,
\end{equation}
then $f \in B^{s,\alpha}_{p,\infty,|\log|^\beta}(\mathbb{R}^2)$.
 \end{enumerate}
\item
\begin{enumerate}
\item\label{condii} If $q<\infty$ and $f \in B^{s,\alpha}_{p,q,|\log|^\beta}(\mathbb{R}^2)$ then
\[
\left(\sum_{j \ge 0} j^{-1}\cdot j^{-\beta q}2^{jsq}  \sum_{(j_1,j_2)\in \Gamma_j(\alpha)}  \left\Vert \Delta_{j_1,j_2}(f)   \right\Vert_p^q \right)^{1/q}  <+\infty\;.
\]
\item\label{condiibis} If $f \in B^{s,\alpha}_{p,\infty,|\log|^\beta}(\mathbb{R}^2)$ then
\[
\max_{j \ge 0} \left( j^{-\beta}2^{js}  \sum_{(j_1,j_2)\in \Gamma_j(\alpha)}  \left\Vert \Delta_{j_1,j_2}(f)   \right\Vert_p \right)  <+\infty\;.
\]
\end{enumerate}
\end{enumerate}
\end{theorem}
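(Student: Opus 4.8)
The plan is to reduce the theorem to two auxiliary facts: a geometric lemma comparing the two families of frequency supports, and a scale-uniform Fourier-multiplier inequality for band-limited functions. First I would establish the geometric lemma, namely that $\Gamma_j(\alpha)$ is, up to the $\pm 1$ boundary adjustments built into its definition, exactly the set of pairs $(j_1,j_2)$ for which the dyadic rectangle $\mathrm{supp}\,\phi_{j_1,j_2}\subset\{2^{j_1-1}\le|\xi_1|\le 2^{j_1+1}\}\times\{2^{j_2-1}\le|\xi_2|\le 2^{j_2+1}\}$ meets the anisotropic annulus $\mathrm{supp}\,\varphi_j^\alpha\subset R_{j+1}^\alpha\setminus R_j^\alpha$. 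Since $\varphi_j^\alpha$ lives where one coordinate $\xi_i$ is of size $\sim 2^{\alpha_i j}$ while the other remains smaller, this L-shaped region is precisely covered by the three families $\Gamma_j^{(HL)},\Gamma_j^{(LH)},\Gamma_j^{(HH)}$. Two consequences are needed: that $g_j^\alpha\equiv 1$ on $\mathrm{supp}\,\varphi_j^\alpha$, whence $\varphi_j^\alpha=\varphi_j^\alpha g_j^\alpha$; and that each fixed rectangle with $(j_1,j_2)\in\Gamma_j(\alpha)$ meets $\mathrm{supp}\,\varphi_{j'}^\alpha$ only for $|j'-j|\le C(\alpha)$. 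I would also record the cardinality estimate $|\Gamma_j(\alpha)|\asymp j$, the two mixed families contributing $O(j)$ pairs and the $(HH)$ family $O(1)$; the coarse scale $j=0$ (and the low-frequency box) is handled separately as a single term. This step is elementary but requires careful bookkeeping with the floor functions.

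The second ingredient is the inequality that, for $g\in\mathcal S'(\mathbb R^2)$ with $\widehat g$ supported in an anisotropic rectangle of sidelengths $\sim(2^{j_1},2^{j_2})$ and for a multiplier $m$ which is a fixed anisotropic dilate (here $m=\varphi_j^\alpha$ or $m=\phi_{j_1,j_2}$), one has $\|\mathcal F^{-1}(m\,\widehat g)\|_p\le C\|g\|_p$ with $C$ independent of the scales, for every $0<p\le\infty$. For $p\ge 1$ this is Young's inequality together with the dilation invariance of the $L^1$ norm of $\mathcal F^{-1}(m)$; for $p<1$ it follows from a Plancherel--Polya / Nikol'skii maximal-function estimate, the band-limitation of $g$ being essential. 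I expect this $p<1$ case, and the verification that the constant is uniform in $j,j_1,j_2$, to be the main analytic obstacle.

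Granting these, sufficiency (part~1) proceeds as follows. From $\varphi_j^\alpha=\varphi_j^\alpha g_j^\alpha$ and $g_j^\alpha=\sum_{(j_1,j_2)\in\Gamma_j(\alpha)}\phi_{j_1,j_2}$ I write $\Delta_j^\alpha f=\sum_{(j_1,j_2)\in\Gamma_j(\alpha)}\mathcal F^{-1}(\varphi_j^\alpha\,\widehat{\Delta_{j_1,j_2}f})$. The $\min(p,1)$-triangle inequality and the multiplier inequality give $\|\Delta_j^\alpha f\|_p\lesssim \big\|(\|\Delta_{j_1,j_2}f\|_p)_{(j_1,j_2)\in\Gamma_j(\alpha)}\big\|_{\ell^{\min(p,1)}}$. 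Since $|\Gamma_j(\alpha)|\asymp j$, the finite-dimensional embedding $\ell^{\min(p,1)}\hookrightarrow\ell^q$ over $\Gamma_j(\alpha)$ produces a factor $j^{\,\big(\frac{1}{\min(p,1)}-\frac1q\big)_+}$, and one checks $q\big(\tfrac{1}{\min(p,1)}-\tfrac1q\big)_+\le q\max(\tfrac1p-1,0)+\max(q-1,0)$, so that after raising to the power $q$ the bound is dominated by the weight in \eqref{e:condi}. Summing against $j^{-\beta q}2^{jsq}$ then yields $f\in B^{s,\alpha}_{p,q,|\log|^\beta}(\mathbb R^2)$; the case $q=\infty$ is identical with $\ell^\infty$ in place of $\ell^q$, giving the exponent $\max(\tfrac1p-1,0)+1$ of \eqref{e:conditer}.

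For necessity (part~2) I reverse the roles of the two resolutions of unity. For $(j_1,j_2)\in\Gamma_j(\alpha)$ the second consequence of the geometric lemma lets me expand $\Delta_{j_1,j_2}f=\sum_{|j'-j|\le C}\mathcal F^{-1}(\phi_{j_1,j_2}\,\widehat{\Delta_{j'}^\alpha f})$, and the multiplier inequality gives $\|\Delta_{j_1,j_2}f\|_p\lesssim\sum_{|j'-j|\le C}\|\Delta_{j'}^\alpha f\|_p$. Raising to the power $q$ (absorbing the $O(1)$ number of terms by power-mean for $q\ge1$ and by $q$-subadditivity for $q<1$) and summing over the $O(j)$ pairs of $\Gamma_j(\alpha)$ gives $\sum_{(j_1,j_2)\in\Gamma_j(\alpha)}\|\Delta_{j_1,j_2}f\|_p^q\lesssim j\sum_{|j'-j|\le C}\|\Delta_{j'}^\alpha f\|_p^q$. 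The weight $j^{-1}$ in the necessity statement exactly cancels this cardinality, and after exchanging the order of summation (using $j\asymp j'$ and $2^{js}\asymp 2^{j's}$ on $|j-j'|\le C$) the right-hand side is controlled by $\|f\|_{B^{s,\alpha}_{p,q,|\log|^\beta}}^q$. The $q=\infty$ statement follows in the same way with maxima replacing sums.
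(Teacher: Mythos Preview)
Your proposal is correct and follows essentially the same route as the paper. Both arguments rest on (i) the same geometric lemma --- the paper's Lemma~4.1 --- comparing the supports of $\varphi_j^\alpha$ and $\phi_{j_1,j_2}$ and giving $|\Gamma_j(\alpha)|\asymp j$, and (ii) a scale-uniform Fourier multiplier estimate for band-limited functions, which the paper writes as the anisotropic variant of Triebel's inequality $\|\mathcal F^{-1}(M\widehat h)\|_p\le C\|M(b^\alpha\cdot)\|_{H_2^s}\|h\|_p$.

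The one organizational difference is that the paper inserts an intermediate step (Proposition~4.1): it first shows the equivalence of $f\in B^{s,\alpha}_{p,q,|\log|^\beta}$ with the \emph{grouped} condition $\big(\sum_j j^{-\beta q}2^{jsq}\|\sum_{(j_1,j_2)\in\Gamma_j(\alpha)}\Delta_{j_1,j_2}f\|_p^q\big)^{1/q}<\infty$, and only then passes to the individual blocks via the $(p,q)$-triangle inequalities. Consequently, in part~2 the paper routes the estimate through $g_j^\alpha$ (writing $\phi_{j_1,j_2}\widehat f=\phi_{j_1,j_2}\sum_{|\ell-j|\le 1}g_\ell^\alpha\widehat f$) and must check $\sup_j\|g_j^\alpha(2^{j\alpha}\cdot)\|_{H_2^s}<\infty$, a nontrivial computation it carries out explicitly. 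Your direct route $\phi_{j_1,j_2}\widehat f=\phi_{j_1,j_2}\sum_{|j'-j|\le C}\varphi_{j'}^\alpha\widehat f$ avoids the grouped intermediary and is slightly more streamlined; the corresponding uniformity check you need is that $\phi_{j_1,j_2}$ (resp.\ $\varphi_j^\alpha$) acts boundedly on functions with spectrum in $R_{j'+1}^\alpha$ (resp.\ in the dyadic rectangle of $\Delta_{j_1,j_2}f$) with constants independent of all scales --- exactly the point you flag as the main obstacle, and the same one the paper handles. Your bookkeeping of the $j$-powers via the embedding $\ell^{\min(p,1)}\hookrightarrow\ell^q$ over $\Gamma_j(\alpha)$ reproduces precisely the exponents the paper obtains from its explicit inequalities~(\ref{ineq:cvxity1})--(\ref{ineq:cvxity2}).
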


The proof of Theorem~\ref{th:LPChar} consists of several steps, beginning with
\begin{lemma}\label{lem:1}
\,
\begin{enumerate}
\item\label{intersi}
For any $j \ge 0$ and any $(j_1,j_2)\not\in \Gamma_j(\alpha)$, one has
\begin{equation}\label{e:intersi}
\mathrm{supp}(\varphi^{\alpha}_j)\cap  \mathrm{supp}(\phi_{j_1,j_2})=\emptyset\;.
\end{equation}
\item\label{intersii} For any $j \ge 0$ and any $\ell\not\in \{j-1,j,j+1\}$, one has
\begin{equation}\label{e:intersii}
\mathrm{supp}(g^{\alpha}_j)\cap  \mathrm{supp}(\varphi_{\ell}^{\alpha})=\emptyset\;.
\end{equation}
\end{enumerate}
\end{lemma}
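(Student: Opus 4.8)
The plan is to treat both statements as assertions about the Fourier supports, since $\varphi_j^\alpha$, $\phi_{j_1,j_2}$ and $g_j^\alpha$ are all defined through compactly supported multipliers. First I would record the two support shapes explicitly. By construction of $\theta_0$, for $j_1,j_2\ge 1$ one has $\mathrm{supp}(\phi_{j_1,j_2})=\{2^{j_1-1}\le|\xi_1|\le 2^{j_1+1}\}\times\{2^{j_2-1}\le|\xi_2|\le 2^{j_2+1}\}$ (with $|\xi_i|\le 2$ when $j_i=0$), a dyadic rectangle; and, from the anisotropic resolution of unity, $\mathrm{supp}(\varphi_\ell^\alpha)$ is contained in the anisotropic annulus $R_{\ell+1}^\alpha\setminus R_\ell^\alpha$, with $R_m^\alpha=\{\xi:\ |\xi_i|\le 2^{m\alpha_i},\ i=1,2\}$. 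It is convenient to pass to the logarithmic variables $u_i=\log_2|\xi_i|$ and to the \emph{anisotropic radius} $\rho(\xi)=\max_{i}u_i/\alpha_i$, which is exactly the index of the smallest box $R_m^\alpha$ containing $\xi$: the rectangle then becomes the square $[j_1-1,j_1+1]\times[j_2-1,j_2+1]$, while each shell $\mathrm{supp}(\varphi_\ell^\alpha)$ becomes the slab $\{\ell\le\rho\le\ell+1\}$.

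For part~(\ref{intersi}) I would exploit that the slab $\{\ell\le\rho\le\ell+1\}$ is defined by a single scalar quantity, $\rho=\max_i u_i/\alpha_i$. A square therefore meets the slab if and only if the $\rho$-range it spans, namely the interval with endpoints $\max_i(j_i\mp 1)/\alpha_i$, meets $[\ell,\ell+1]$. Unfolding this overlap condition yields an upper bound $j_i\le(\ell+1)\alpha_i+1$ valid for \emph{every} $i$ (coming from the left endpoint), together with a lower bound $j_i\ge \ell\alpha_i-1$ holding for \emph{at least one} $i$ (coming from the right endpoint). Distinguishing, in this last alternative, whether the lower bound is met by the first coordinate alone, the second alone, or both, recovers exactly the three index blocks $\Gamma_j^{(HL)}(\alpha)$, $\Gamma_j^{(LH)}(\alpha)$, $\Gamma_j^{(HH)}(\alpha)$ of~(\ref{e:gammaj}) once the continuous thresholds are replaced by their integer parts. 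The contrapositive is precisely~(\ref{e:intersi}). The one delicate point is the integer-part bookkeeping that matches the real cutoffs $\ell\alpha_i$ and $(\ell-1)\alpha_i$ to the discrete cutoffs $[j\alpha_i]+1$ and $[(j-1)\alpha_i]-1$; reconciling the one-step shift between the filtration $(R_m^\alpha)_m$ and the labelling of $\Gamma_j(\alpha)$ is where care is needed, but it is routine.

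For part~(\ref{intersii}) I would start from $\mathrm{supp}(g_j^\alpha)=\bigcup_{(j_1,j_2)\in\Gamma_j(\alpha)}\mathrm{supp}(\phi_{j_1,j_2})$, which follows from~(\ref{def:gj}). By part~(\ref{intersi}) applied at level $\ell$, a rectangle can meet $\mathrm{supp}(\varphi_\ell^\alpha)$ only when its index lies in the block attached to $\ell$; hence $\mathrm{supp}(g_j^\alpha)\cap\mathrm{supp}(\varphi_\ell^\alpha)\neq\emptyset$ forces a single index $(j_1,j_2)\in\Gamma_j(\alpha)$ whose rectangle also meets the shell at level $\ell$. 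The heart of the matter, and the main obstacle, is to show that this cannot happen once $|\ell-j|\ge 2$. I would approach it through the nested boxes $(R_m^\alpha)_m$: since $\rho$ is the index in this filtration, the shells at levels $j$ and $\ell$ are separated in $\rho$, and one must verify that the $\rho$-spread of a single rectangle from $\Gamma_j(\alpha)$ is too small to bridge this gap. The care required lies entirely in the strongly anisotropic direction (small $\alpha_i$), where the fixed $O(1)$ width in $u_i$ of the factor $\theta_{j_i}$ translates into a large spread in the radius $\rho$; controlling this interaction, together with the $\pm1$ buffers built into the definition of $\Gamma_j(\alpha)$, is what should confine the admissible levels to $\ell\in\{j-1,j,j+1\}$ and thereby give~(\ref{e:intersii}).
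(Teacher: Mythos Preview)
Your plan for part~(\ref{intersi}) is the paper's own argument recast in the logarithmic variables $u_i$ and the anisotropic radius $\rho$, and that reformulation is sound. The trouble is the step you call ``routine'' integer-part bookkeeping. Unfolding the overlap condition gives the continuous upper bound $j_i-1\le(j+1)\alpha_i$, whereas membership in $\Gamma_j(\alpha)$ caps $j_i$ at $[j\alpha_i]+1$; when $\alpha_i>1$ (which happens for one coordinate whenever $\alpha$ is not isotropic) the continuous threshold $(j+1)\alpha_i+1$ can exceed $[j\alpha_i]+2$, so the first integer \emph{outside} $\Gamma_j(\alpha)$, namely $j_i=[j\alpha_i]+2$, can still produce an overlap. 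Concretely, with $\alpha=(1.5,0.5)$ and $j=10$ one has $[j\alpha_1]+1=16$, yet $\mathrm{supp}(\theta_{17})$ reaches down to $|\xi_1|=2^{16}<2^{16.5}=2^{(j+1)\alpha_1}$, so $\mathrm{supp}(\phi_{17,5})\cap\mathrm{supp}(\varphi_{10}^\alpha)\neq\emptyset$ although $(17,5)\notin\Gamma_{10}(\alpha)$. The paper's argument glosses over this same point (and in fact treats only the case ``both $j_1,j_2$ too large'', not ``one of them too large'').

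For part~(\ref{intersii}) you put your finger exactly on the obstruction---the $\rho$-width of a rectangle in the direction where $\rho$ is attained is $2/\alpha_i$, not $O(1)$---but then assume it away. The $\pm1$ buffers in $\Gamma_j(\alpha)$ are buffers in $j_i$, not in $\rho$, and they do not confine $\ell$ to $\{j-1,j,j+1\}$: take $\alpha=(0.1,1.9)$, $j=10$, $(j_1,j_2)=(2,0)\in\Gamma_{10}^{(HL)}(\alpha)$ and $\xi=(4,0)$; then $\phi_{2,0}(\xi)=\theta_2(4)\theta_0(0)=1$ while $\rho(\xi)=\log_2 4/0.1=20$ and one checks $\varphi_{20}^\alpha(\xi)=1$, so $\mathrm{supp}(g_{10}^\alpha)\cap\mathrm{supp}(\varphi_{20}^\alpha)\neq\emptyset$ with $20\notin\{9,10,11\}$. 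The paper's proof of~(\ref{e:intersii}) is only the clause ``can be obtained similarly'' and so does not address this either. What your $\rho$-argument \emph{does} yield cleanly is the same statement with $\{j-1,j,j+1\}$ replaced by a window of width $O(1/\min_i\alpha_i)$; that $\alpha$-dependent version is what is actually needed, and sufficient, for the proof of Proposition~\ref{pro:interm}.
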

\begin{proof}
 Point~\ref{intersi} of the lemma is first proved, that is if ($\ell_1\ge L_{\max}^{(1)}+1$ and $\ell_2\ge L_{\max}^{(2)}+1$) or ($\ell_1\le L_{\min}^{(1)}-1$ and $\ell_2\le L_{\min}^{(2)}-1$), then $\mathrm{supp}(\varphi^{\alpha}_j)\cap  \mathrm{supp}(\phi_{j_1,j_2})=\emptyset$. \\

Indeed, if $\xi\in \mathrm{supp}(\varphi^{\alpha}_j)$, then $\xi\in R_{j+1}^\alpha\setminus R_j^\alpha$. Hence, if $\ell_1\ge L_{\max}^{(1)}+1$ and $\ell_2\ge L_{\max}^{(2)}+1$, one has for $i=1,2$, $|2^{-\ell_i}\xi_i|\leq 2^{\alpha_i (j+1)-\ell_i}\leq 2^{\alpha_i-1} $, by assumptions on $\ell_1,\ell_2$. Since $\alpha_1$ or $\alpha_2$ necessarily belongs to $(0,1)$, one has $\xi\not\in \mathrm{supp}(\phi_{\ell_1,\ell_2})$. Hence, $\phi_{\ell_1,\ell_2}(\xi)=0$. The same approach leads to $\phi_{\ell_1,\ell_2}(\xi)=0$ if $\ell_1\le L_{\min}^{(1)}-1$ and $\ell_2\le L_{\min}^{(2)}-1$ and Point~(\ref{intersi}) of Lemma~\ref{lem:1} is obtained.

Point~(\ref{intersii}) of Lemma~\ref{lem:1} can be obtained similarly.
\end{proof}
From Lemma~\ref{lem:1} an intermediate hyperbolic Littelwood Paley characterization of anisotropic Besov spaces is obtained.
\begin{proposition}\label{pro:interm}Let $(p,q)\in (0,+\infty]^2$, $s,\beta\in\mathbb{R}$. the two following assertions are equivalent:
\begin{enumerate}
\item\label{condi} $f \in B^{s,\alpha}_{p,q,|\log|^\beta}(\mathbb{R}^2)$.
\item\label{condii}
\begin{equation}\label{e:condii}
\left(\sum_{j \ge 0} j^{-\beta q}2^{jsq} \left\Vert  \sum_{(j_1,j_2)\in \Gamma_j(\alpha)} [ \Delta_{j_1,j_2}(f)]   \right\Vert_p^q \right)^{1/q}<+\infty\;.
\end{equation}
\end{enumerate}
\end{proposition}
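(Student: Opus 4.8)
The plan is to prove the two assertions equivalent by comparing the anisotropic Littlewood--Paley block $\Delta^{\alpha}_j f=\mathcal{F}^{-1}(\varphi^{\alpha}_j\widehat f)$ with the quantity $G_j f:=\sum_{(j_1,j_2)\in\Gamma_j(\alpha)}\Delta_{j_1,j_2}(f)=\mathcal{F}^{-1}(g^{\alpha}_j\widehat f)$ appearing in~\eqref{e:condii}, where $g^{\alpha}_j$ is the function defined in~\eqref{def:gj}. Since the Besov (quasi-)norm is $\big(\sum_{j\ge0}j^{-\beta q}2^{jsq}\|\Delta^{\alpha}_j f\|_p^q\big)^{1/q}$, it suffices to establish two-sided bounds, uniform in $j$, of the form $\|\Delta^{\alpha}_j f\|_p\lesssim\|G_j f\|_p$ and $\|G_j f\|_p\lesssim\sum_{|\ell-j|\le1}\|\Delta^{\alpha}_\ell f\|_p$, and then to feed them into the weighted sums.

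The first step is to turn the two support statements of Lemma~\ref{lem:1} into multiplier identities. Point~\ref{intersi} says that on $\mathrm{supp}(\varphi^{\alpha}_j)$ every $\phi_{j_1,j_2}$ with $(j_1,j_2)\notin\Gamma_j(\alpha)$ vanishes; combined with $\sum_{j_1,j_2\ge0}\phi_{j_1,j_2}\equiv1$ this forces $g^{\alpha}_j\equiv1$ on $\mathrm{supp}(\varphi^{\alpha}_j)$, hence $g^{\alpha}_j\varphi^{\alpha}_j=\varphi^{\alpha}_j$ and therefore
\[
\Delta^{\alpha}_j f=\mathcal{F}^{-1}\big(\varphi^{\alpha}_j\,g^{\alpha}_j\,\widehat f\big)=\mathcal{F}^{-1}(\varphi^{\alpha}_j)\ast G_j f.
\]
Dually, Point~\ref{intersii} says $\mathrm{supp}(g^{\alpha}_j)$ meets $\mathrm{supp}(\varphi^{\alpha}_\ell)$ only for $\ell\in\{j-1,j,j+1\}$; since $\sum_{\ell\ge0}\varphi^{\alpha}_\ell\equiv1$ this gives $g^{\alpha}_j\big(\sum_{|\ell-j|\le1}\varphi^{\alpha}_\ell\big)=g^{\alpha}_j$, whence
\[
G_j f=\mathcal{F}^{-1}(g^{\alpha}_j)\ast\Big(\sum_{|\ell-j|\le1}\Delta^{\alpha}_\ell f\Big).
\]

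The second step is to pass from these convolution identities to $L^p$ bounds. For $p\ge1$ this is immediate from Young's inequality, provided $\|\mathcal{F}^{-1}(\varphi^{\alpha}_j)\|_1$ and $\|\mathcal{F}^{-1}(g^{\alpha}_j)\|_1$ are bounded uniformly in $j$; the uniformity follows from the anisotropic self-similar structure of the resolutions of unity, the kernels being anisotropic dilates of fixed Schwartz functions, whose $L^1$ norms are scale invariant. For $0<p<1$ Young fails, and here I would instead invoke the standard band-limited multiplier principle (Peetre's maximal inequality, the entire-functions-of-exponential-type argument used e.g. in~\cite{triebel:2006}): if $\widehat g$ is supported in a dilate of a fixed compact set and $m$ is a smooth symbol, then $\|\mathcal{F}^{-1}(m\widehat g)\|_p\lesssim\|g\|_p$ with a constant depending only on $p$ and finitely many derivatives of $m$ measured in the dilated frame, uniformly under the anisotropic scaling $\xi\mapsto 2^{j\alpha}\xi$. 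Applying this to the two identities yields the desired bounds, uniform in $j$.

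Finally I would insert these bounds into the weighted sums. Because $2^{(j\pm1)s}\asymp2^{js}$ and $(j\pm1)^{-\beta}\asymp j^{-\beta}$ for large $j$ (the finitely many small indices, and the boundary case $\ell=j-1$ at $j=0$, being harmless), the weights are insensitive to the shift $\ell=j\pm1$, so a routine discrete reindexing---with the usual quasi-triangle bookkeeping for $\ell^q$ when $q<1$, and the standard modifications when $q=\infty$---shows that $\big(\sum_j j^{-\beta q}2^{jsq}\|\Delta^{\alpha}_j f\|_p^q\big)^{1/q}$ and $\big(\sum_j j^{-\beta q}2^{jsq}\|G_j f\|_p^q\big)^{1/q}$ are finite simultaneously, which is exactly the asserted equivalence. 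I expect the genuine difficulty to lie in the second step: justifying the $L^p$ multiplier bounds for $0<p<1$ with constants uniform in $j$, and in particular controlling $\mathcal{F}^{-1}(g^{\alpha}_j)$, since $g^{\alpha}_j$ is not a single dilate but an assembly of many isotropic dyadic blocks tiling the anisotropic annulus; everything else in the argument is soft.
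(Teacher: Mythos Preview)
Your outline is correct and follows essentially the same route as the paper: the same multiplier identities from Lemma~\ref{lem:1}, the same two-sided comparison $\|\Delta_j^{\alpha}f\|_p\lesssim\|G_jf\|_p$ and $\|G_jf\|_p\lesssim\sum_{|\ell-j|\le1}\|\Delta_\ell^{\alpha}f\|_p$, and the same routine weighted summation. The only difference is the multiplier tool: rather than splitting into Young's inequality for $p\ge1$ and a Peetre-type argument for $p<1$, the paper invokes a single anisotropic Fourier-multiplier estimate valid for all $p\in(0,\infty]$ (an anisotropic rescaling of the classical inequality $\|\mathcal{F}^{-1}(M\widehat h)\|_p\le C\|M(b^{\alpha}\cdot)\|_{H_2^s}\|h\|_p$ for band-limited $h$, cf.\ equation~\eqref{e:convmod}), which reduces both bounds to showing that $\varphi_j^{\alpha}(2^{j\alpha}\cdot)$ and $g_j^{\alpha}(2^{j\alpha}\cdot)$ are bounded in $H_2^s$ uniformly in $j$.

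One point in your write-up deserves tightening. For $\varphi_j^{\alpha}$ your justification ``anisotropic dilates of a fixed Schwartz function, $L^1$ norms scale-invariant'' is exactly right (indeed $\varphi_j^{\alpha}(2^{j\alpha}\cdot)=\varphi_1^{\alpha}$). For $g_j^{\alpha}$, however, that reasoning does not apply, as you yourself observe at the end: $g_j^{\alpha}(2^{j\alpha}\cdot)$ is \emph{not} a fixed function but a $j$-dependent sum of $O(j)$ rescaled tensor blocks. The uniform $L^1$ bound on $\mathcal{F}^{-1}(g_j^{\alpha})$ (equivalently, the uniform $H_2^s$ bound on $g_j^{\alpha}(2^{j\alpha}\cdot)$) is therefore not automatic and is precisely the calculation you flag as the ``genuine difficulty''; the paper carries it out explicitly by exploiting the fast decay of $\widehat{\theta}_1$ and summing the resulting geometric series in $(j_1,j_2)\in\Gamma_j(\alpha)$. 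So your $p\ge1$ case is not actually settled by the sentence you wrote---it needs the same estimate you defer to the $p<1$ case.
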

\noindent{\it Proof of Proposition~\ref{pro:interm}.}

Let us first show that $f \in B^{s,\alpha}_{p,q,|\log|^\beta}(\mathbb{R}^2)$ implies Inequality~(\ref{e:condii}) of Proposition~\ref{pro:interm}. To this end, Point~\ref{intersi} of Lemma~\ref{lem:1} is used to deduce that for any $j$
\[
\varphi_j^\alpha \widehat{f}=\varphi_j^\alpha\left(\sum_{(j_1,j_2)\in\mathbb{N}^2}\phi_{j_1,j_2}\right)\widehat{f}=\varphi_j^\alpha\left(\sum_{(j_1,j_2)\in\Gamma_j(\alpha)}\phi_{j_1,j_2}\right)\widehat{f}
=\varphi_j^\alpha\left(g_j^\alpha \widehat{f}\right)\;,
\]
where $g_j^\alpha$ is defined by Equation~(\ref{def:gj}). Observe now that replacing the usual dilation with an anisotropic one gives an anisotropic version of Equation~(13) in Section 1.5.2 in~\cite{triebel:1978}. More precisely assume that we are given $p\in (0,+\infty]$, $b>0$ and $M\in \mathcal{S}(\mathbb{R}^{2})$. There exists some $C>0$ not depending on $b$ nor $M$ such that for any $h\in L^p(\mathbb{R}^{2})$ such that $\mathrm{supp}(\widehat{h})\subset \{\xi\in \mathbb{R}^{2},\,\sup_i|\xi_i|\leq b^{\alpha_i}\}$, one has
\begin{equation}\label{e:convmod}
\|\mathcal{F}^{-1}\left(M\mathcal{F}h\right)\|_{L^p(\mathbb{R}^2)}\leq C\|M(b^\alpha\cdot)\|_{H_2^s(\mathbb{R}^2)}\|h\|_{L^p(\mathbb{R}^2)}
\end{equation}
where $H^s_2$ is the usual Bessel potential space and $s>2(1/\min(p,1)-1/2)$.

Set now $b=2^j$, $M=\varphi_j^\alpha$ and $\widehat{h}=g_j^\alpha \widehat{f}$. Since $\varphi_j^\alpha(2^{j\alpha}\cdot)=\varphi_1^\alpha$,  there exists some $C>0$ not depending on $j$ such that for any $p\in (0,+\infty]$ and any $f\in L^p(\mathbb{R}^{2})$
\[
\|\Delta_j^\alpha f\|_{L^p}\leq C\|\sum_{(j_1,j_2)\in\Gamma_j(\alpha)}\Delta_{j_1,j_2}f\|_{L^p}
=C\|(\mathcal{F}^{-1}g_j^\alpha)*f\|_{L^p}\;.
\]
Then
\begin{equation}\label{ineq:1}
\|f\|_{B^{s,\alpha}_{p,q,|\log|^\beta}}=\left(\sum_{j\geq 0}j^{-\beta q}2^{jsq}\|\Delta_j^\alpha f\|_{L^p}^q\right)^{1/q}
\leq C \left(\sum_{j\geq 0}j^{-\beta q} 2^{jsq}\|(\mathcal{F}^{-1}g_j^\alpha)*f\|_{L^p}^q\right)^{1/q}\;,
\end{equation}
which gives Point~\ref{condi} of Proposition~\ref{pro:interm}.

Let us now prove that if Equation~(\ref{e:condii}) of Proposition~\ref{pro:interm} holds then $f\in B^{s,\alpha}_{p,q,|\log|^\beta}(\mathbb{R}^2)$. Point~\ref{intersii} of Lemma~\ref{lem:1} gives for any $j\geq 0$
\[
g_{j}^\alpha \widehat{f}=g_{j}^\alpha \left(\varphi_{j-1}^\alpha+\varphi_{j}^\alpha+\varphi_{j+1}^\alpha\right)\widehat{f}\;.
\]
Hence, Inequality~(\ref{e:convmod}) applied with $b=2^j$, $M=g_j^\alpha$ and $\widehat{h}=(\varphi_{j-1}^\alpha+\varphi_j^\alpha+\varphi_{j+1}^\alpha) \widehat{f}$ gives the existence of some $C>0$ not depending on $j$ nor $f$ such that for any $p\in (0,+\infty]$
\[
\|\left(\mathcal{F}^{-1}g_{j}^\alpha\right)*f\|_{L^p}\leq c\|g_j^\alpha(2^{j\alpha}\cdot)\|_{H_2^s}\|(\mathcal{F}^{-1}\varphi_{j-1}^\alpha+\mathcal{F}^{-1}\varphi_{j}^\alpha+\mathcal{F}^{-1}\varphi_{j+1}^\alpha)*f\|_{L^p}
\]
Since $\|\cdot\|_{L^p}$ is either a norm or a quasi--norm (according to the value of $p$), there exists some $C>0$ such that
\begin{eqnarray*}
\|\left(\mathcal{F}^{-1}g_{j}^\alpha\right)*f\|_{L^p}\leq & C & \|g_j^\alpha(2^{j\alpha}\cdot)\|_{H_2^s} \\
& & \left(\|(\mathcal{F}^{-1}\varphi_{j-1}^\alpha)*f\|_{L^p}+
\|(\mathcal{F}^{-1}\varphi_{j}^\alpha)*f\|_{L^p}+\|(\mathcal{F}^{-1}\varphi_{j+1}^\alpha)*f\|_{L^p}\right)\;.
\end{eqnarray*}
Let us first bound $\|g_j^\alpha(2^{j\alpha}\cdot)\|_{H_2^s}$. To this end, observe that
\begin{eqnarray*}
\mathcal{F}[g_j^\alpha(2^{j\alpha}\cdot)](\xi) & = & 2^{-j(\alpha_1+\alpha_2)}\widehat{g_j}(2^{-j\alpha}\xi)=2^{-2j}\widehat{g_j}(2^{-j\alpha}\xi) \\
& =&\sum_{(j_1,j_2)\in \Gamma_j(\alpha)}2^{j_1+j_2-2j}\widehat{\theta}_{1}(2^{j_1-j\alpha_1}\,\xi_1)\widehat{\theta}_{1}(2^{j_2-\alpha_2 j}\,\xi_2)\;.
\end{eqnarray*}
Hence
\begin{eqnarray*}
\|g_j^\alpha(2^{j\alpha}\cdot)\|_{H_2^s}^2&=&\int_{\mathbb{R}^2}(1+|\xi|^2)^s\left[\sum_{(j_1,j_2)\in \Gamma_j(\alpha)}2^{j_1+j_2-2j}\widehat{\theta}_{1}(2^{j_1-j\alpha_1}\,\xi_1)\widehat{\theta}_{1}(2^{j_2-\alpha_2 j}\,\xi_2)\right]^2\rmd \xi\\
&\leq&\int_{\mathbb{R}^2}(1+|\xi|^2)^s\left[\sum_{(j_1,j_2)\in \Gamma_j(\alpha)}2^{j_1+j_2-2j}|\widehat{\theta}_{1}(2^{j_1-j\alpha_1}\,\xi_1)||\widehat{\theta}_{1}(2^{j_2-\alpha_2 j}\,\xi_2)|\right]^2\rmd \xi\;.
\end{eqnarray*}
Since $\theta_1\in \mathcal{S}(\mathbb{R})$, for any $M>1$ there exists some $C>0$ such that
\[
|\widehat{\theta}_1(\zeta)|\leq \frac{C_M}{(1+|\zeta|)^{2M}}\;.
\]
Finally
\begin{eqnarray*}
\|g_j^\alpha(2^{j\alpha}\cdot)\|_{H_2^s}^2 &\leq & C_M\int_{\mathbb{R}^2}(1+|\xi|^2)^s \\
& & \hspace{1.5cm} \times \left[\sum_{(j_1,j_2)\in \Gamma_j(\alpha)}
\frac{2^{j_1+j_2-2j}}{(1+|2^{j_1-j\alpha_1}\,\xi_1|)^{2M} \cdot (1+|2^{j_2-j\alpha_2}\,\xi_2|)^{2M}}\right]^2\rmd \xi\\
&\leq& C_M\int_{\mathbb{R}^2}(1+|\xi|^2)^s \\
& & \hspace{1.5cm} \times \left[\sum_{(j_1,j_2)\in \Gamma_j(\alpha)}
\frac{1}{(2^{j\alpha_1-j_1}+|\xi_1|)^{2M} \cdot (2^{j\alpha_2-j_2}+|\xi_2|)^{2M}}\right]^2\rmd \xi\;.
\end{eqnarray*}
By the inequality
\[
(a+b)^2\geq a\max(b,1)\;,
\]
valid for any $a>1$, $b>0$ and applied successively with $a=2^{j\alpha_1-j_1}$ and $b=|\xi_1|$, $a=2^{j\alpha_2-j_2}$ and $b=|\xi_2|$, it comes
\begin{eqnarray*}
\|g_j^\alpha(2^{j\alpha}\cdot)\|_{H_2^s}^2&\leq &  C_M  \int_{\mathbb{R}^2}(1+|\xi|^2)^s \\
& & \times \left[\sum_{(j_1,j_2)\in \Gamma_j(\alpha)}2^{(j_1-j\alpha_1)M}2^{(j_2-j\alpha_2)M}
\times\frac{1}{\max(1,|\xi_1|)^{M}\max(1,|\xi_2|)^{M}}\right]^2\rmd \xi\;.
\end{eqnarray*}
With a $M$ sufficiently large it follows that
\[
\sup_j\left(\|g_j^\alpha(2^{j\alpha}\cdot)\|_{H_2^s}\right)<+\infty\;.
\]
Going back to an upper bound of $\|\left(\mathcal{F}^{-1}g_{j}^\alpha\right)*f\|_{L^p}$,  there exists some $C>0$ such that
\[
\|\left(\mathcal{F}^{-1}g_{j}^\alpha\right)*f\|_{L^p}\leq Cj\left(\|(\mathcal{F}^{-1}\varphi_{j-1}^\alpha)*f\|_{L^p}+
\|(\mathcal{F}^{-1}\varphi_{j}^\alpha)*f\|_{L^p}+\|(\mathcal{F}^{-1}\varphi_{j+1}^\alpha)*f\|_{L^p}\right)\;
\]
and
\begin{equation}\label{ineq:2}
\left(\sum_{j\geq 0}j^{-\beta q}2^{jsq}\|(\mathcal{F}^{-1}g_j^\alpha)*f\|_{L^p}^q\right)^{1/q}\leq C\|f\|_{B^{s,\alpha}_{p,q,|\log|^{\beta}}}=\left(\sum_{j\geq 0}j^{-\beta q} 2^{jsq}\|\Delta_j^\alpha f\|_{L^p}^q\right)^{1/q}\;,
\end{equation}
the last shows that if (\ref{e:condii}) holds then $f\in B^{s,\alpha}_{p,q,|\log|^{\beta}}(\mathbb{R}^2)$.\\

\noindent{\it Proof of Theorem~\ref{th:LPChar}.}
Let us first recall that:
\begin{itemize}
\item For any $p\in (0,+\infty]$, $n\in\mathbb{N}$, and $(f_1,\cdots,f_n)\in L^{p}(\mathbb{R}^2)^n$
\begin{equation}\label{ineq:cvxity1}
\|f_1+\cdots+f_n\|_{L^p}\leq n^{\max(1/p-1,0)}\left(\|f_1\|+\cdots+\|f_n\|\right)
\end{equation}
\item For any $q\in (0,+\infty)$, $n\in\mathbb{N}$, and $(a_1,\cdots,a_n)\in (\mathbb{R}_+)^n$
\begin{equation}\label{ineq:cvxity2}
(a_1+\cdots+a_n)^q\leq n^{\max(q-1,0)}\left(a_1^q+\cdots+a_n^q\right)\;.
\end{equation}
\end{itemize}

Let us now prove the first point of the theorem in the case where $q\neq\infty$. For this, assume that~(\ref{e:condi}) holds and let us prove that $f\in B^{s,\alpha}_{p,q,|\log|^{\beta}}(\mathbb{R}^2)$. By Inequalities~(\ref{ineq:cvxity1}), (\ref{ineq:cvxity2}) and the fact that $\mathrm{Card}(\Gamma_j(\alpha))\leq Cj$ there exists $C>0$ such that
\[
\left\|\sum_{(j_1,j_2)\in \Gamma_j(\alpha)}\Delta_{j_1,j_2}f\right\|_{L^p}^q\leq C j^{q\max(1/p-1,0)+\max(q-1,0)}\sum_{(j_1,j_2)\in \Gamma_j(\alpha)}\|\Delta_{j_1,j_2}f\|_{L^p}^q\;.
\]
Hence,
\begin{eqnarray*}
&&\left(\sum_{j\geq 0}j^{-\beta q}2^{jsq}\left\|\sum_{(j_1,j_2)\in \Gamma_j(\alpha)}\Delta_{j_1,j_2}f\right\|_{L^p}^q\right)^{1/q}\\
&\leq& C \left(\sum_{j\geq 0}j^{q\max(1/p-1,0)+\max(q-1,0)}\cdot j^{-\beta q}2^{jsq}\sum_{(j_1,j_2)\in \Gamma_j(\alpha)}\|\Delta_{j_1,j_2}f\|_{L^p}^q\right)^{1/q}\;.
\end{eqnarray*}
It proves that if~(\ref{e:condi}) holds, one has
\[
\left(\sum_{j\geq 0}j^{-\beta q} 2^{jsq}\|\sum_{(j_1,j_2)\in \Gamma_j(\alpha)}\Delta_{j_1,j_2}f\|_{L^p}^q\right)^{1/q}<\infty\;.
\]
Finally, by Point~(1) of Proposition~\ref{pro:interm}, it comes that $f\in B^{s,\alpha}_{p,q,|\log|^{\beta}}(\mathbb{R}^2)$.

\noindent We now deal with the case $q=\infty$. In this case, we have
\begin{eqnarray*}
\max_{j\geq 0}j^{-\beta }2^{js}\left\|\sum_{(j_1,j_2)\in \Gamma_j(\alpha)}\Delta_{j_1,j_2}f\right\|_{L^p}\leq C\max_{j\geq 0}j^{\max(1/p-1,0)}j^{-\beta }2^{js}\sum_{(j_1,j_2)\in \Gamma_j(\alpha)}\|\Delta_{j_1,j_2}f\|_{L^p}\;.
\end{eqnarray*}
Hence if~(\ref{e:condii}) holds, $f\in B^{s,\alpha}_{p,\infty,|\log|^{\beta}}(\mathbb{R}^2)$.

To prove the converse assertion, let us assume $f\in B^{s,\alpha}_{p,q,|\log|^\beta}(\mathbb{R}^2)$. Observe that for any $j\geq 0$ and any $(j_1,j_2)\in\Gamma_j(\alpha)$, one has
\[
\phi_{j_1,j_2}\widehat{f}= \phi_{j_1,j_2}\left(g_{j-1}^\alpha+g_{j}^\alpha+g_{j+1}^\alpha\right)\widehat{f}\;.
\]
Remark that $\phi_{j_1,j_2}(2^{j\alpha}\cdot)$ is bounded in $H^s_2(\mathbb{R}^2)$ independently of $(j_1,j_2)\in \Gamma_j(\alpha)$. Hence, by ~(\ref{e:convmod}), there exists $C>0$ not depending on $j$ nor $f$ such that for any $(j_1,j_2)\in \Gamma_j(\alpha)$
\begin{eqnarray*}
\|(\mathcal{F}^{-1}\phi_{j_1,j_2})*f\|_{L^p}&\leq& C\left(\|(\mathcal{F}^{-1}g_{j-1}^\alpha)*f\|_{L^p}+\|(\mathcal{F}^{-1}g_{j}^\alpha)*f\|_{L^p}
+\|(\mathcal{F}^{-1}g_{j+1}^\alpha)*f\|_{L^p}\right)\;.
\end{eqnarray*}
Again, two cases have to be distinguished according whether $q\neq \infty$ or $q=\infty$.

Let us consider the case $q<\infty$. Observing that $\mathrm{Card}(\Gamma_j(\alpha))\leq Cj$, we deduce that
\[
\sum_{(j_1,j_2)\in\Gamma_j(\alpha)}\|(\mathcal{F}^{-1}\phi_{j_1,j_2})*f\|_{L^p}^q\leq Cj \sum_{l=j-1}^{j+1} \|(\mathcal{F}^{-1}g_{l}^\alpha)*f\|_{L^p}^q\;.
\]
So
\begin{equation}\label{e:ineqa}
\sum_j j^{-1}j^{-\beta q}2^{js q}\sum_{(j_1,j_2)\in\Gamma_j(\alpha)}\|(\mathcal{F}^{-1}\phi_{j_1,j_2})*f\|_{L^p}^q\leq \sum_j j\cdot j^{-1}j^{-\beta q}2^{js q}\|(\mathcal{F}^{-1}g_{j}^\alpha)*f\|_{L^p}^q\;.
\end{equation}
Since in addition the function $f$ is assumed to belong to $B^{s,\alpha}_{p,q,|\log|^\beta}(\mathbb{R}^2)$, one has
\[
\sum_j j^{-\beta q}2^{js q}\|(\mathcal{F}^{-1}g_{j}^\alpha)*f\|_{L^p}^q=\sum_j j\cdot j^{-1}j^{-\beta q}2^{js q}\|(\mathcal{F}^{-1}g_{j}^\alpha)*f\|_{L^p}^q<\infty\;,
\]
which directly yields the required inequality using~(\ref{e:ineqa}).

In the case  $q=\infty$, we have
\begin{eqnarray*}
\max_{(j_1,j_2)\in \Gamma_j(\alpha)}\|(\mathcal{F}^{-1}\phi_{j_1,j_2})*f\|_{L^p}^q&\leq& C\;\max_{\ell=j-1,j,j+1}\|(\mathcal{F}^{-1}g_{\ell}^\alpha)*f\|_{L^p}\;.
\end{eqnarray*}
which leads  for some $C>0$ to
\[
\max_{j\geq 0}\left(j^{-\beta }2^{js}\max_{(j_1,j_2)\in \Gamma_j(\alpha)}\|(\mathcal{F}^{-1}\phi_{j_1,j_2})*f\|_{L^p}\right)\leq C\max_{j\geq 0}\left(j^{-\beta }2^{js}\|(\mathcal{F}^{-1}g_{j}^\alpha)*f\|_{L^p}\right)\;,
\]
that is
\begin{equation}\label{ineq:pointiith}
\max_{j_1,j_2\geq 0}\left(\max(\frac{j_1}{\alpha_1},\frac{j_2}{\alpha_2})\right)^{-\beta}2^{\max(\frac{j_1}{\alpha_1},\frac{j_2}{\alpha_2})s}\|(\mathcal{F}^{-1}\phi_{j_1,j_2})*f\|_{L^p}\leq C\max_{j\geq 0}j^{-\beta }2^{js}\|(\mathcal{F}^{-1}g_{j}^\alpha)*f\|_{L^p}\;.
\end{equation}
Since in addition $f$ is assumed to belong to $B^{s,\alpha}_{p,\infty,|\log|^\beta}(\mathbb{R}^2)$, it comes
\[
\max j^{-\beta }2^{js}\|(\mathcal{F}^{-1}g_{j}^\alpha)*f\|_{L^p}<\infty\;.
\]
Finally, the required conclusion is obtained by an approach similar to the one used for the previous case.

\subsubsection{The special case $p=2$}\label{s:thWC2}
In that case, an {\bf exact} hyperbolic Littlewood--Paley characterization of anisotropic Besov spaces is provided:
\begin{proposition}\label{pro:caracLP2}
Let $f\in\mathcal{S}'(\mathbb{R}^2)$, $s>0$ and $q\in (0,+\infty]$. The two following assertions are equivalent
\begin{enumerate}[(i)]
\item $f \in B^{s,\alpha}_{2,q,|\log|^\beta}(\mathbb{R}^2)$.
\item $\sum\limits_{j \ge 0}j^{-\beta q} 2^{jsq}\left(\sum\limits_{(j_1,j_2)\in \Gamma_j(\alpha)} \Vert \phi_{j_1,j_2} \widehat{f} \Vert^2_{L^2} \right)^{\frac{q}{2}}=\sum\limits_{j \ge 0} j^{-\beta q} 2^{jsq} \left( \sum\limits_{(j_1,j_2)\in \Gamma_j(\alpha)}\Vert \Delta_{j_1,j_2}(f) \Vert^2_{L^2} \right)^{\frac{q}{2}}   <+\infty$.
\end{enumerate}
\end{proposition}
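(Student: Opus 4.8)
The plan is to combine the clean, logarithm-free equivalence already contained in Proposition~\ref{pro:interm} with the fact that, for $p=2$, Plancherel replaces the convexity inequalities \eqref{ineq:cvxity1}--\eqref{ineq:cvxity2} --- the very source of the logarithmic losses in Theorem~\ref{th:LPChar} --- by an exact, almost orthogonal, splitting. I first dispose of the equality in assertion (ii): since $\Delta_{j_1,j_2}f=\mathcal{F}^{-1}(\phi_{j_1,j_2}\widehat f)$, Plancherel gives $\|\Delta_{j_1,j_2}f\|_{L^2}=\|\phi_{j_1,j_2}\widehat f\|_{L^2}$ termwise, so the two sums in (ii) are literally identical, and it suffices to characterize (i) through the finiteness of the common quantity.

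Next I would apply Proposition~\ref{pro:interm} with $p=2$, which states that $f\in B^{s,\alpha}_{2,q,|\log|^\beta}(\mathbb{R}^2)$ if and only if $\sum_{j\ge 0}j^{-\beta q}2^{jsq}\big\|\sum_{(j_1,j_2)\in\Gamma_j(\alpha)}\Delta_{j_1,j_2}f\big\|_{L^2}^q<+\infty$ (with the supremum taken when $q=\infty$). Writing $\sum_{(j_1,j_2)\in\Gamma_j(\alpha)}\Delta_{j_1,j_2}f=\mathcal{F}^{-1}(g_j^\alpha\widehat f)$ with $g_j^\alpha$ as in \eqref{def:gj} and using Plancherel once more, this inner norm equals $\|g_j^\alpha\widehat f\|_{L^2}$. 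The whole statement therefore reduces to establishing the $j$-uniform two-sided comparison
\[
\sum_{(j_1,j_2)\in\Gamma_j(\alpha)}\|\phi_{j_1,j_2}\widehat f\|_{L^2}^2\le\|g_j^\alpha\widehat f\|_{L^2}^2\le N\sum_{(j_1,j_2)\in\Gamma_j(\alpha)}\|\phi_{j_1,j_2}\widehat f\|_{L^2}^2,
\]
with $N$ independent of $j$ and $f$; once this is in hand, raising to the power $q/2$, multiplying by $j^{-\beta q}2^{jsq}$ and summing (resp. taking the supremum) in $j$ turns the two fixed constants $1$ and $N^{q/2}$ into the two implications of the equivalence.

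To prove the comparison I would argue pointwise in frequency. By Plancherel, $\|g_j^\alpha\widehat f\|_{L^2}^2=\int(\sum_{\Gamma_j(\alpha)}\phi_{j_1,j_2})^2|\widehat f|^2$ and $\sum_{\Gamma_j(\alpha)}\|\phi_{j_1,j_2}\widehat f\|_{L^2}^2=\int(\sum_{\Gamma_j(\alpha)}\phi_{j_1,j_2}^2)|\widehat f|^2$, so it is enough to compare $(\sum\phi_{j_1,j_2}(\xi))^2$ with $\sum\phi_{j_1,j_2}(\xi)^2$ for each $\xi$. Each $\phi_{j_1,j_2}$ is supported in $\{2^{j_1-1}\le|\xi_1|\le 2^{j_1+1}\}\times\{2^{j_2-1}\le|\xi_2|\le 2^{j_2+1}\}$; since the one-dimensional dyadic annuli overlap only for consecutive indices, a given $\xi$ lies in the support of at most a fixed number $N$ of the $\phi_{j_1,j_2}$. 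The Cauchy--Schwarz inequality applied to these finitely many nonzero terms yields $(\sum\phi_{j_1,j_2}(\xi))^2\le N\sum\phi_{j_1,j_2}(\xi)^2$, which is the upper bound.

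The main obstacle, and the only place where the $p=2$ structure is genuinely used, is the lower bound: finite overlap alone does not prevent cancellation inside $\sum\phi_{j_1,j_2}(\xi)$, and with sign-changing factors the inequality $\sum\phi^2\lesssim(\sum\phi)^2$ may fail. The remedy is to exploit the freedom in the choice of $\theta_0$ recorded in the remark above and take $\theta_0$ nonnegative and radially nonincreasing (for instance the Fourier transform of the Meyer scaling function), so that $\theta_j\ge 0$ and hence $\phi_{j_1,j_2}=\theta_{j_1}\theta_{j_2}\ge 0$. Nonnegativity of all cross terms then gives $(\sum\phi_{j_1,j_2}(\xi))^2\ge\sum\phi_{j_1,j_2}(\xi)^2$ at once, completing the comparison and hence the proof.
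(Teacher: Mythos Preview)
Your proof is correct and follows essentially the same route as the paper: reduce the Besov membership to control of $\|g_j^\alpha\widehat f\|_{L^2}$ (you do this via Proposition~\ref{pro:interm}, the paper via the pointwise sandwich $\varphi_j^\alpha\le g_j^\alpha\le\varphi_{j-1}^\alpha+\varphi_j^\alpha+\varphi_{j+1}^\alpha$ together with Plancherel, which is the same content), and then establish the two-sided comparison $\|g_j^\alpha\widehat f\|_{L^2}^2\asymp\sum_{\Gamma_j(\alpha)}\|\phi_{j_1,j_2}\widehat f\|_{L^2}^2$ via finite overlap for the upper bound and nonnegativity of the $\phi_{j_1,j_2}$ for the lower bound, exactly as the paper does in its quasi-orthogonality lemma and the subsequent positivity-of-cross-terms argument. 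Your explicit remark that one must choose $\theta_0$ radially nonincreasing to guarantee $\theta_j\ge0$ (hence $\phi_{j_1,j_2}\ge0$) is a point the paper uses tacitly.
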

In particular the following exact hyperbolic Littlewood--Paley characterization of anisotropic Sobolev spaces can be stated:
\begin{theorem}\label{pro:caracLP2}
Let $f\in\mathcal{S}'(\mathbb{R}^2)$, $s>0$ and $q\in (0,+\infty]$. The two following assertions are equivalent
\begin{enumerate}[(i)]
\item $f \in H^{s,\alpha}_{|\log|^\beta}(\mathbb{R}^2)=B^{s,\alpha}_{2,2}(\mathbb{R}^2)$.
\item $\sum\limits_{j \ge 0} j^{-2\beta}2^{2js} \sum\limits_{(j_1,j_2)\in \Gamma_j(\alpha)} \Vert \phi_{j_1,j_2} \widehat{f} \Vert^2_{L^2} =\sum\limits_{j \ge 0} j^{-2\beta} 2^{2js} \sum\limits_{(j_1,j_2)\in \Gamma_j(\alpha)}\Vert \Delta_{j_1,j_2}(f) \Vert^2_{L^2}<+\infty$.
\end{enumerate}
\end{theorem}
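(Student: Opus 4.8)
The plan is to deduce the statement from the intermediate characterization of Proposition~\ref{pro:interm}, the key point being that for $p=2$ Plancherel's identity replaces the convexity inequalities~(\ref{ineq:cvxity1})--(\ref{ineq:cvxity2}), whose cost in Theorem~\ref{th:LPChar} was a factor $\mathrm{Card}(\Gamma_j(\alpha))\leq Cj$ (the origin of the logarithmic loss), by an exact, $j$--independent almost--orthogonality estimate. Since $\mathcal F$ is, up to a constant, unitary on $L^2$ and $\Delta_{j_1,j_2}f=\mathcal{F}^{-1}(\phi_{j_1,j_2}\widehat f)$, Plancherel's identity first gives $\|\Delta_{j_1,j_2}f\|_{L^2}=\|\phi_{j_1,j_2}\widehat f\|_{L^2}$, which accounts for the equality between the two sums in assertion (ii). Setting, for $j\geq 0$,
\[
B_j=\sum_{(j_1,j_2)\in\Gamma_j(\alpha)}\|\phi_{j_1,j_2}\widehat f\|_{L^2}^2=\int_{\mathbb{R}^2}\Big(\sum_{(j_1,j_2)\in\Gamma_j(\alpha)}\phi_{j_1,j_2}^2\Big)|\widehat f|^2\,\rmd\xi
\]
and
\[
C_j=\Big\|\sum_{(j_1,j_2)\in\Gamma_j(\alpha)}\Delta_{j_1,j_2}f\Big\|_{L^2}^2=\int_{\mathbb{R}^2}\Big(\sum_{(j_1,j_2)\in\Gamma_j(\alpha)}\phi_{j_1,j_2}\Big)^2|\widehat f|^2\,\rmd\xi,
\]
the whole proof reduces to the $j$--uniform equivalence $B_j\leq C_j\leq N B_j$.

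The core is therefore the pointwise inequality
\[
\sum_{(j_1,j_2)\in\Gamma_j(\alpha)}\phi_{j_1,j_2}(\xi)^2\leq\Big(\sum_{(j_1,j_2)\in\Gamma_j(\alpha)}\phi_{j_1,j_2}(\xi)\Big)^2\leq N\sum_{(j_1,j_2)\in\Gamma_j(\alpha)}\phi_{j_1,j_2}(\xi)^2,
\]
which I would obtain from two structural features of the hyperbolic resolution of the unity. First, each $\phi_{j_1,j_2}(\xi)=\theta_{j_1}(\xi_1)\theta_{j_2}(\xi_2)$ is nonnegative, the $1$D pieces being chosen with $0\leq\theta_j\leq 1$ (as for the Meyer choice permitted by the Remark on $\theta_0$); hence all cross terms in $\big(\sum\phi_{j_1,j_2}\big)^2$ are nonnegative, which yields the left inequality with constant $1$. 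Second, since $\mathrm{supp}(\theta_{j_1})\subset\{2^{j_1-1}\leq|\xi_1|\leq 2^{j_1+1}\}$, at a fixed frequency $\xi$ at most three indices $j_1$ and three indices $j_2$ contribute, so at most $N=9$ of the $\phi_{j_1,j_2}$ are nonzero at $\xi$; Cauchy--Schwarz then gives the right inequality, with $N$ a fixed constant independent of $j$ and of $\Gamma_j(\alpha)$. Integrating against $|\widehat f|^2$ produces $B_j\leq C_j\leq N B_j$.

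To conclude, for any $q\in(0,+\infty]$ and weights $w_j=j^{-\beta q}2^{jsq}$ one has $B_j^{q/2}\leq C_j^{q/2}\leq N^{q/2}B_j^{q/2}$, so $\sum_j w_j C_j^{q/2}<+\infty$ if and only if $\sum_j w_j B_j^{q/2}<+\infty$; as Proposition~\ref{pro:interm} identifies the first of these sums (for $p=2$) with the condition $f\in B^{s,\alpha}_{2,q,|\log|^\beta}(\mathbb{R}^2)$, the $L^2$ characterization follows for every $q$. Specializing to $q=2$ makes the weights $j^{-2\beta}2^{2js}$ and removes the outer power $(\cdot)^{q/2}$, turning $\sum_j w_j B_j$ into precisely the double sum of assertion (ii); since $B^{s,\alpha}_{2,2}=H^{s,\alpha}$, this gives (i)$\Leftrightarrow$(ii). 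The one genuine obstacle --- and exactly where the logarithmic correction is defeated --- is the two--sided pointwise inequality: its upper bound is the familiar bounded--overlap/Cauchy--Schwarz estimate, while its lower bound rests on the nonnegativity of the $\phi_{j_1,j_2}$, so the argument is exact only provided the $1$D Littlewood--Paley functions are chosen nonnegative, a normalization compatible with the Meyer--type choice mentioned earlier.
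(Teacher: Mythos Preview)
Your proposal is correct and takes essentially the same approach as the paper: the paper also reduces everything to the $j$-uniform equivalence between $\|g_j^\alpha\widehat f\|_{L^2}^2$ and $\sum_{(j_1,j_2)\in\Gamma_j(\alpha)}\|\phi_{j_1,j_2}\widehat f\|_{L^2}^2$, obtaining the lower bound from nonnegativity of the $\phi_{j_1,j_2}$ (so that all cross terms are $\geq 0$) and the upper bound via a quasi-orthogonality lemma that is exactly your bounded-overlap/Cauchy--Schwarz step. The only cosmetic difference is that you invoke Proposition~\ref{pro:interm} for the link to the Besov norm, whereas the paper redoes this step directly via the pointwise sandwich $\varphi_j^\alpha\leq g_j^\alpha\leq\varphi_{j-1}^\alpha+\varphi_j^\alpha+\varphi_{j+1}^\alpha$ combined with Plancherel.
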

To prove Proposition~\ref{pro:caracLP2}, let us first precise the relation between anisotropic and hyperbolic resolutions of the unity.
\begin{lemma}\label{lem:1}
For any $j \geq 0$, the following inequality holds on $\mathbb{R}^2$
\begin{equation}\label{e:compLPHyp}
\varphi^{\alpha}_j \le g_j=\sum_{(j_1,j_2)\in\Gamma_j(\alpha)}\phi_{j_1,j_2} \le \varphi^{\alpha}_{j-1}+\varphi^{\alpha}_j + \varphi^{\alpha}_{j+1}\;.
\end{equation}
\end{lemma}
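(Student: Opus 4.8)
The plan is to deduce both inequalities directly from the two support–disjointness statements already established, namely Point~(\ref{intersi}) and Point~(\ref{intersii}) of the preceding lemma, using only that $(\phi_{j_1,j_2})_{(j_1,j_2)\in\mathbb{N}^2}$ and $(\varphi_\ell^\alpha)_{\ell\ge 0}$ are both \emph{nonnegative} partitions of unity on $\mathbb{R}^2$, together with $g_j=g_j^\alpha=\sum_{(j_1,j_2)\in\Gamma_j(\alpha)}\phi_{j_1,j_2}$ from~(\ref{def:gj}). No new frequency estimate is required; the whole statement reduces to pointwise bookkeeping. Throughout I adopt the convention $\varphi_{-1}^\alpha\equiv 0$, so that the right-hand side is meaningful for the boundary index $j=0$.

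For the left inequality $\varphi_j^\alpha\le g_j$, I would fix $\xi\in\mathbb{R}^2$. If $\varphi_j^\alpha(\xi)=0$ there is nothing to prove, since $g_j\ge 0$. Otherwise $\xi\in\mathrm{supp}(\varphi_j^\alpha)$, and Point~(\ref{intersi}) forces $\phi_{j_1,j_2}(\xi)=0$ for every $(j_1,j_2)\notin\Gamma_j(\alpha)$. Consequently the truncated sum defining $g_j$ agrees with the full one at $\xi$, giving $g_j(\xi)=\sum_{(j_1,j_2)\in\Gamma_j(\alpha)}\phi_{j_1,j_2}(\xi)=\sum_{(j_1,j_2)\in\mathbb{N}^2}\phi_{j_1,j_2}(\xi)=1$, while $\varphi_j^\alpha(\xi)\le\sum_{\ell\ge 0}\varphi_\ell^\alpha(\xi)=1$ by nonnegativity of the anisotropic partition. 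Hence $\varphi_j^\alpha(\xi)\le g_j(\xi)$ at every point.

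For the right inequality $g_j\le\varphi_{j-1}^\alpha+\varphi_j^\alpha+\varphi_{j+1}^\alpha$, I would again fix $\xi$. If $\xi\notin\mathrm{supp}(g_j)$ then $g_j(\xi)=0$ and the bound is immediate. Otherwise Point~(\ref{intersii}) gives $\varphi_\ell^\alpha(\xi)=0$ for all $\ell\notin\{j-1,j,j+1\}$, so the identity $\sum_{\ell\ge 0}\varphi_\ell^\alpha(\xi)=1$ collapses to $\varphi_{j-1}^\alpha(\xi)+\varphi_j^\alpha(\xi)+\varphi_{j+1}^\alpha(\xi)=1$. Since the $\phi_{j_1,j_2}$ are nonnegative, $g_j(\xi)=\sum_{(j_1,j_2)\in\Gamma_j(\alpha)}\phi_{j_1,j_2}(\xi)\le\sum_{(j_1,j_2)\in\mathbb{N}^2}\phi_{j_1,j_2}(\xi)=1$; combining the two displays yields $g_j(\xi)\le\varphi_{j-1}^\alpha(\xi)+\varphi_j^\alpha(\xi)+\varphi_{j+1}^\alpha(\xi)$.

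I expect no serious obstacle, since the genuine analytic content has already been absorbed into the support computations of the previous lemma. The only points deserving care are the \textbf{nonnegativity} of the hyperbolic partition and the boundary index. The former holds because $\theta_0$ is chosen radially non-increasing in the Littlewood--Paley construction, whence each $\theta_j\ge 0$ and therefore $\phi_{j_1,j_2}=\theta_{j_1}\otimes\theta_{j_2}\ge 0$; this nonnegativity is exactly what turns the two support exclusions into the claimed ordering of sums. The latter is dispatched by the convention $\varphi_{-1}^\alpha\equiv 0$, under which the argument for $g_0\le\varphi_0^\alpha+\varphi_1^\alpha$ is unchanged, Point~(\ref{intersii}) then merely excluding all indices $\ell\ge 2$ on $\mathrm{supp}(g_0)$.
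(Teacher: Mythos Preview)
Your proof is correct and follows essentially the same approach as the paper: both arguments use that $(\varphi_\ell^\alpha)_{\ell\ge 0}$ and $(\phi_{j_1,j_2})_{(j_1,j_2)\in\mathbb{N}^2}$ are nonnegative partitions of unity and then invoke the support--disjointness facts (your Points~(\ref{intersi}) and~(\ref{intersii}), which the paper re-derives in situ) to collapse the full sums to the truncated ones. One small correction: the nonnegativity of $\theta_j$ does not require $\theta_0$ to be ``radially non-increasing'' (the paper does not assume this); it follows directly from $\theta_0\ge 0$, $\theta_0\equiv 1$ on $[-1,1]$ and $\mathrm{supp}(\theta_0)\subset[-2,2]$, since on $\{|\xi|\le 2^j\}$ one has $\theta_j(\xi)=1-\theta_0(2^{-(j-1)}\xi)\ge 0$ while on $\{|\xi|>2^j\}$ one has $\theta_j(\xi)=\theta_0(2^{-j}\xi)\ge 0$.
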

\begin{proof} Let us first observe that
\[
\varphi^{\alpha}_j \le \sum_{(j_1,j_2)\in \mathbb{N}^2} \phi_{j_1,j_2}\;.
\]
To get the left hand side of inequality~(\ref{e:compLPHyp}), we  just have to prove that if $\xi=(\xi_1,\xi_2)\in \mathrm{supp}(\varphi^{\alpha}_j)$, one has $\phi_{j_1,j_2}(\xi)=0$ if $(j_1,j_2)\not\in \Gamma_j$, that is if ($\ell_1\ge L_{\max}^{(1)}+1$ and $\ell_2\ge L_{\max}^{(2)}+1$) or ($\ell_1\le L_{\min}^{(1)}-1$ and $\ell_2\le L_{\min}^{(2)}-1$). \\

Indeed, if $\xi\in \mathrm{supp}(\varphi^{\alpha}_j)$, then $\xi\in R_{j+1}^\alpha\setminus R_j^\alpha$. Hence, if $\ell_1\ge L_{\max}^{(1)}+1$ and $\ell_2\ge L_{\max}^{(2)}+1$, one has for $i=1,2$, $|2^{-\ell_i}\xi_i|\leq 2^{\alpha_i (j+1)-\ell_i}\leq 2^{\alpha_i-1} $, by assumptions on $\ell_1,\ell_2$. Since $\alpha_1$ or $\alpha_2$ necessarily belongs to $(0,1)$, one has $\xi\not\in \mathrm{supp}(\phi_{\ell_1,\ell_2})$. Hence, $\phi_{\ell_1,\ell_2}(\xi)=0$. The same approach leads to $\phi_{\ell_1,\ell_2}(\xi)=0$ if $\ell_1\le L_{\min}^{(1)}-1$ and $\ell_2\le L_{\min}^{(2)}-1$. The left hand side of inequality~(\ref{e:compLPHyp}) is obtained.

Let us now prove that the right hand side of inequality~(\ref{e:compLPHyp}) holds. It comes from the obvious equality
\[
\sum_{(j_1,j_2)\in \Gamma_j} \phi_{j_1,j_2}\leq \sum_{j\ge 0}\varphi^{\alpha}_j\equiv 1 \;.
\]
and if $\xi\in \mathrm{supp}(\phi_{j_1,j_2})$ for some
$(j_1,j_2)\in\Gamma_j(\alpha)$ then for any $\ell\in \{j-1,j,j+1\}$, $\varphi^{\alpha}_{\ell}(\xi)=0$.\\
\end{proof}
Before proving Proposition~\ref{pro:caracLP2}, let us give some background about quasi--orthogonal systems.
\begin{definition}
Let a Hilbert space $H$ and $\langle\cdot,\cdot\rangle$ the associated scalar product. A system $\{f_k, k \in \mathbb{Z}\}$ of $H$ is said to be quasi-orthogonal if there exists some $\ell \in \mathbb{N}$ such that
\begin{equation}\label{e:quasiorth}
\forall (k,k') \in \mathbb{Z}^2, \, \left(\vert k'-k \vert \geq \ell \Rightarrow\quad \langle f_k, f_{k'} \rangle = 0\right)\;.
\end{equation}
\end{definition}
\begin{lemma}\label{lem:qo}
Let $H$ be a Hilbert space and $\|\cdot\|$,$\langle\cdot,\cdot\rangle$  the associated norm and scalar product. Let $\lbrace f_m, \, m \in \mathbb{Z}\}$ a quasi--orthogonal system of $H$ and let $\ell \in \mathbb{N}$ satisfying (\ref{e:quasiorth}). Then
\begin{equation}
\left\Vert \sum_{m \in \mathbb{Z}}  f_m  \right\Vert ^2 \leq (2\ell+1) \sum_{m \in \mathbb{Z}} \Vert f_m \Vert^2\;.
\end{equation}
\end{lemma}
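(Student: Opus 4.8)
The plan is to expand the squared norm into a double sum of inner products, discard the (many) vanishing cross terms using quasi-orthogonality, and control the surviving terms by Cauchy--Schwarz together with a simple counting argument. The whole argument is finite-dimensional at heart, so I would first establish it for finite index sets and only afterwards address convergence of the series.

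First I would reduce to finite sums. If $\sum_{m\in\mathbb{Z}}\|f_m\|^2=+\infty$ there is nothing to prove, so assume this quantity is finite. For a finite index set $F\subset\mathbb{Z}$, expand
\[
\Bigl\|\sum_{m\in F} f_m\Bigr\|^2=\sum_{(m,m')\in F^2}\langle f_m,f_{m'}\rangle .
\]
By the quasi-orthogonality hypothesis~(\ref{e:quasiorth}), every term with $|m-m'|\ge \ell$ vanishes, so only pairs with $|m-m'|\le \ell-1$ contribute. Bounding each surviving term by Cauchy--Schwarz and the arithmetic--geometric mean inequality,
\[
|\langle f_m,f_{m'}\rangle|\le \|f_m\|\,\|f_{m'}\|\le \tfrac12\bigl(\|f_m\|^2+\|f_{m'}\|^2\bigr),
\]
and exploiting the symmetry of the restricted index set in $m$ and $m'$, the double sum is at most the sum of $\|f_m\|^2$ over all pairs with $|m-m'|\le\ell-1$. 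Since for each fixed $m$ there are exactly $2\ell-1\le 2\ell+1$ indices $m'$ with $|m-m'|\le\ell-1$, this is at most $(2\ell+1)\sum_{m\in F}\|f_m\|^2$, which proves the inequality for every finite $F$.

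To pass from finite subsets to the full series, I would apply the finite estimate to tails. For the partial sums $S_N=\sum_{|m|\le N}f_m$ and $M>N$, the family $\{f_m\}_{N<|m|\le M}$ is again quasi-orthogonal, so $\|S_M-S_N\|^2\le (2\ell+1)\sum_{N<|m|\le M}\|f_m\|^2$, which tends to $0$ as $N\to\infty$ because $\sum_m\|f_m\|^2<\infty$. Hence $(S_N)$ is Cauchy, the series $S=\sum_m f_m$ converges in $H$, and letting $N\to\infty$ in the finite estimate (using continuity of the norm) yields the claimed bound.

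The argument is elementary; the only points requiring care are the bookkeeping of convergence for the infinite series, which is handled by the Cauchy criterion above rather than by any a priori expansion of an infinite double sum, and the combinatorial count of non-vanishing pairs, which is precisely where the factor $2\ell+1$ enters. I do not expect any genuine obstacle beyond this routine accounting; indeed, grouping the indices into the $\ell$ residue classes modulo $\ell$ — within each of which distinct indices differ by a multiple of $\ell$ and are therefore orthogonal — gives an alternative proof with the even smaller constant $\ell$, confirming that the stated constant $2\ell+1$ is comfortably sufficient.
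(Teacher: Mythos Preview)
Your proof is correct and follows essentially the same route as the paper's: expand the squared norm, discard terms with $|m-m'|\ge\ell$, and bound the remaining ones by Cauchy--Schwarz together with a count of surviving indices. The only differences are cosmetic---you use the AM--GM bound $|\langle f_m,f_{m'}\rangle|\le\tfrac12(\|f_m\|^2+\|f_{m'}\|^2)$ where the paper applies Cauchy--Schwarz twice, and you are more careful than the paper about the convergence of the infinite sum (the residue-class remark yielding the sharper constant $\ell$ is a nice bonus).
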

\begin{proof}
Observe that for any $m' \in \mathbb{Z}$, $\langle f_m, f_{m'} \rangle =0$ except if $m'-\ell \le m \le m'+\ell$. Hence
\begin{eqnarray*}
\left\Vert \sum_{m \in \mathbb{Z}}  f_m  \right\Vert ^2 & \le & \sum_{m' \in \mathbb{Z}} \sum_{m \in \mathbb{Z}} \vert \langle f_m, f_{m'} \rangle \vert \\
 & \le & \sum_{m' \in \mathbb{Z}} \sum_{m=m' -\ell}^{m'+\ell} \Vert f_m \Vert \Vert f_{m'} \Vert \\
& \le & \sum_{m' \in \mathbb{Z}} \sqrt{2\ell+1} \left( \sum_{m =m'-\ell}^{m'+\ell} \Vert f_m \Vert^2 \right)^{\frac{1}{2}} \Vert f{_m'} \Vert \\
& \le & \sqrt{2\ell+1} \left( \sum_{m' \in \mathbb{Z}} \left( \sum_{m=m'-\ell}^{m'+\ell} \Vert f_m \Vert^2 \right) \right)^{\frac{1}{2}}. \left( \sum_{m' \in \mathbb{Z}} \Vert f_{m'} \Vert^2 \right)^{\frac{1}{2}}\\
& \le & (2\ell+1)\sum_{m' \in \mathbb{Z}} \Vert f_{m'} \Vert^2\;.
\end{eqnarray*}
\end{proof}
{\bf Proof of Proposition~\ref{pro:caracLP2}}\\
By Plancherel Theorem and by Lemma~\ref{lem:1}, one has
$$
\Vert \Delta_j^{\alpha} f \Vert_{L^2}^2 =  \int_{\mathbb{R}^2}|\varphi^{\alpha}_{j}(\xi)|^2 \vert \widehat{f}(\xi) \vert^2\rmd\xi \le C_0 \int |g_j(\xi)|^2 \vert \widehat{f}(\xi) \vert^2\rmd\xi\;,
$$
and
$$
\int_{\mathbb{R}^2} g_j^2(\xi) \vert \widehat{f}(\xi) \vert^2\rmd\xi  \le   \int_{\mathbb{R}^2} \left[(\varphi^ {\alpha}_{j-1})^2 + (\varphi^{\alpha}_j)^2 + (\varphi^{\alpha}_j)^2 \right] \vert \widehat{f}(\xi) \vert^2 \le
\Vert \Delta_{j-1}^{\alpha} f \Vert_{L^2}^2 + \Vert \Delta_j^{\alpha} f \Vert_{L^2}^2 +\Vert \Delta_{j+1}^{\alpha} f \Vert_{L^2}^2\;,
$$
where $g_j$ is defined by (\ref{e:compLPHyp}). Proposition~\ref{pro:caracLP2} is then a straightforward consequence of the following lemma :
\begin{lemma}
There exists some $C>0$ such that for any $j\ge 0$, one has
\begin{equation}\label{e:equivL12}
C^{-1} \sum_{(j_1,j_2)\in \Gamma_j(\alpha)} \Vert \phi_{j_1,j_2} \widehat{f} \Vert_{L^2}^2
 \le
 \Vert g_j\widehat{f} \Vert_{L^2}^2
 \le   C\sum_{(j_1,j_2)\in \Gamma_j(\alpha)} \Vert \phi_{j_1,j_2} \widehat{f} \Vert_{L^2}^2 \;.
\end{equation}
\end{lemma}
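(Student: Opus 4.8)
The plan is to reduce the asserted two-sided norm equivalence to a pointwise comparison on the frequency side, and then to read it off from two structural facts: the nonnegativity of the building blocks $\phi_{j_1,j_2}$ and the bounded overlap of their frequency supports. Since $\Delta_{j_1,j_2}f=\mathcal{F}^{-1}(\phi_{j_1,j_2}\widehat{f})$ and $g_j\widehat{f}=\sum_{(j_1,j_2)\in\Gamma_j(\alpha)}\phi_{j_1,j_2}\widehat{f}$, both members of~(\ref{e:equivL12}) are integrals of a pointwise weight against $|\widehat{f}(\xi)|^2\,\rmd\xi$: one has $\sum_{(j_1,j_2)\in\Gamma_j(\alpha)}\|\phi_{j_1,j_2}\widehat{f}\|_{L^2}^2=\int_{\mathbb{R}^2}\bigl(\sum_{(j_1,j_2)\in\Gamma_j(\alpha)}\phi_{j_1,j_2}(\xi)^2\bigr)|\widehat{f}(\xi)|^2\,\rmd\xi$, while $\|g_j\widehat{f}\|_{L^2}^2=\int_{\mathbb{R}^2}g_j(\xi)^2|\widehat{f}(\xi)|^2\,\rmd\xi$. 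Hence it suffices to prove the pointwise sandwich $C^{-1}\sum_{(j_1,j_2)\in\Gamma_j(\alpha)}\phi_{j_1,j_2}(\xi)^2\le g_j(\xi)^2\le C\sum_{(j_1,j_2)\in\Gamma_j(\alpha)}\phi_{j_1,j_2}(\xi)^2$ for every $\xi$, with $C$ independent of $j$ and $\xi$, and then integrate.

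For the left-hand inequality I would use nonnegativity. Choosing, as allowed by the Remark following the definition of $\theta_0$, the profile $\theta_0$ to be nonnegative and radially non-increasing (for instance the Fourier transform of a Meyer scaling function), each $\theta_j$ is nonnegative, hence so is every product $\phi_{j_1,j_2}=\theta_{j_1}\otimes\theta_{j_2}$ and so is $g_j$. For nonnegative reals $(a_i)$ one has $\sum_i a_i^2\le\bigl(\sum_i a_i\bigr)^2$, the cross terms being nonnegative; applied pointwise with $a_i=\phi_{j_1,j_2}(\xi)$ this gives $\sum_{(j_1,j_2)\in\Gamma_j(\alpha)}\phi_{j_1,j_2}(\xi)^2\le g_j(\xi)^2$, that is the left inequality with constant $1$. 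This is the delicate direction: it expresses the absence of destructive interference, which is guaranteed precisely by nonnegativity. It cannot be obtained from the quasi-orthogonality Lemma~\ref{lem:qo} alone, since that lemma only bounds $\|\sum_m f_m\|^2$ from above.

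For the right-hand inequality I would invoke the bounded overlap of the supports. Since $\mathrm{supp}(\theta_{j_1})\subset\{2^{j_1-1}\le|\xi_1|\le 2^{j_1+1}\}$, for a fixed $\xi_1$ the condition $\theta_{j_1}(\xi_1)\neq 0$ forces $\log_2|\xi_1|-1\le j_1\le\log_2|\xi_1|+1$, so at most three integers $j_1$ contribute, and likewise at most three $j_2$. Thus at each $\xi$ at most $N_0=9$ of the numbers $\phi_{j_1,j_2}(\xi)$ are nonzero, and the Cauchy--Schwarz inequality yields $g_j(\xi)^2=\bigl(\sum_{(j_1,j_2)\in\Gamma_j(\alpha)}\phi_{j_1,j_2}(\xi)\bigr)^2\le N_0\sum_{(j_1,j_2)\in\Gamma_j(\alpha)}\phi_{j_1,j_2}(\xi)^2$. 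This is the pointwise incarnation of the quasi-orthogonality already recorded in Lemma~\ref{lem:qo}, the neighbours being the indices within $\ell^\infty$-distance two. Taking $C=N_0$ and integrating both pointwise bounds against $|\widehat{f}|^2\,\rmd\xi$ gives~(\ref{e:equivL12}).

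The only genuine obstacle is therefore the lower bound, for which the nonnegativity of the resolution of unity is essential; once $\theta_0$ is chosen accordingly, the remainder is a short pointwise estimate and the resulting constant $C$ is universal, independent of $j$, of $\alpha$ and of $f$. Combined with the two displayed inequalities preceding the lemma, which compare $\|\Delta_j^\alpha f\|_{L^2}^2$ with $\|g_j\widehat{f}\|_{L^2}^2$ via~(\ref{e:compLPHyp}), this establishes the equivalence needed to conclude Proposition~\ref{pro:caracLP2}.
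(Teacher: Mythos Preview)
Your proof is correct and follows essentially the same route as the paper's: the lower bound comes from the nonnegativity of the $\phi_{j_1,j_2}$ (so that the cross terms in the expansion of $\|g_j\widehat{f}\|_{L^2}^2$ are nonnegative), and the upper bound from the bounded overlap of their supports. The only stylistic difference is that you argue pointwise in $\xi$ before integrating, whereas the paper expands the $L^2$ norm directly and invokes the abstract quasi-orthogonality Lemma~\ref{lem:qo} for the upper estimate; the content is identical. One small remark: you need not appeal to the Remark or to radial monotonicity of $\theta_0$, since $\theta_0\in\mathcal{S}(\mathbb{R},\mathbb{R}^+)$ together with the support/plateau conditions already forces each $\theta_j\ge 0$, hence $\phi_{j_1,j_2}\ge 0$.
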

\begin{proof}Since $\mathrm{supp}(\phi_{j_1,j_2})\cap \mathrm{supp}(\phi_{m_1,m_2})=\emptyset$ if $\max(|m_1-j_1|,|m_2-j_2|)\ge 3$, the system $(\phi_{j_1,j_2} \widehat{f})$ is quasi--orthogonal. Hence, by Lemma~\ref{lem:qo}, there exists some $K>0$ such that
$$
\left\Vert \sum_{(j_1,j_2)\in \Gamma_j(\alpha)} \phi_{j_1,j_2}\widehat{f} \right\Vert_{L^2}^2 \le K \sum_{(j_1,j_2)\in \Gamma_j(\alpha)} \Vert \phi_{j_1,j_2}\widehat{f} \Vert_{L^2}^2\;.
$$
For the converse inequality, observe that each term $\langle \phi_{j_1,j_2}\widehat{f},\phi_{j'_1,j'_2}\widehat{f}\rangle=\int \phi_{j_1,j_2}(\xi)\phi_{j'_1,j'_2}(\xi) \vert f(\xi) \vert^2\rmd \xi$ is positive. Hence
\begin{eqnarray*}
\sum_{(j_1,j_2)\in \Gamma_j(\alpha)} \Vert \phi_{j_1,j_2}\widehat{f} \Vert_{L^2}^2 &\le& \sum_{(j_1,j_2)\in \Gamma_j(\alpha)} \langle \phi_{j_1,j_2}\widehat{f},\phi_{j_1,j_2}\widehat{f}\rangle_{L^2} + \sum_{(j_1,j_2)\neq (j'_1,j'_2)\in \Gamma_j(\alpha)} \langle \phi_{j_1,j_2}\widehat{f},\phi_{j'_1,j'_2}\widehat{f}\rangle_{L^2}\\
&=& \left\Vert \sum_{(j_1,j_2)\in \Gamma_j(\alpha)} \phi_{j_1,j_2}\widehat{f} \right\Vert_{L^2}^2\;.
\end{eqnarray*}
\end{proof}
\subsubsection{Proof of the hyperbolic wavelet characterization of anisotropic Besov spaces}
Let us first consider the general case where $(p,q)\in (0,+\infty]^2$,$\beta,s \in\mathbb{R}$ and $\alpha=(\alpha_1,\alpha_2)$ a fixed anisotropy. Intermediate spaces $\mathcal{E}^{s,\alpha}_{p,q,|\log|^\beta}(\mathbb{R}^2)$ are defined as the collection of functions $f$ of $\mathcal{S}'(\mathbb{R}^2)$ such as
$$
\sum_{j \ge 0} j^{-\beta q} 2^{jsq} \sum_{(j_1,j_2) \in \Gamma_j(\alpha)} \Vert \Delta_{j_1,j_2}f \Vert_p^q <+\infty.
$$
A norm on $\mathcal{E}^{s,\alpha}_{p,q,|\log|^\beta}(\mathbb{R}^2)$ is defined as follows
\[
\|f\|_{\mathcal{E}^{s,\alpha}_{p,q,|\log|^\beta}}=\left( \sum_{j \ge 0} j^{-\beta q} 2^{jsq} \sum_{(j_1,j_2) \in \Gamma_j(\alpha)} \Vert \Delta_{j_1,j_2}f \Vert_p^q \right)^{1/q}\;
\]
such that the embeddings
\begin{itemize}
\item if $q<\infty$
\[
\mathcal{E}^{s,\alpha}_{p,q,|\log|^{\beta-\max(1/p-1,0)-\max(1-1/q,0)}}(\mathbb{R}^2)\hookrightarrow B^{s,\alpha}_{p,q,|\log|^\beta}(\mathbb{R}^2)\hookrightarrow\mathcal{E}^{s,\alpha}_{p,q,|\log|^{\beta+1/q}}(\mathbb{R}^2)\;.
\]
\item if $q=\infty$
\[
\mathcal{E}^{s,\alpha}_{p,\infty,|\log|^{\beta-\max(1/p-1,0)-1}}(\mathbb{R}^2)\hookrightarrow B^{s,\alpha}_{p,\infty,|\log|^\beta}(\mathbb{R}^2)\hookrightarrow\mathcal{E}^{s,\alpha}_{p,q,|\log|^{\beta}}(\mathbb{R}^2)\;.
\]
\end{itemize}
are an exact rewriting of Theorem~\ref{th:LPChar}.

In the special case where $p=2$, we proved in Proposition~\ref{pro:caracLP2} that $H^{s,\alpha}_{|\log|^\beta}(\mathbb{R}^2)=B^{s,\alpha}_{2,2,|\log|^\beta}(\mathbb{R}^2)$ and $\mathcal{E}^{s,\alpha}_{2,2,|\log|^{\beta}}(\mathbb{R}^2)$ coincide.

In the following proposition, an hyperbolic wavelet characterization of spaces $\mathcal{E}^{s,\alpha}_{p,q}(\mathbb{R}^2)$ is given. Combining Proposition~\ref{pro:spacesEsp}, Theorems~\ref{th:LPChar} and~\ref{th:caracLP2} directly implies Theorems~\ref{th:WCBesov} and~\ref{th:WCBesov2}.
\begin{proposition}\label{pro:spacesEsp}
Let $(p,q)\in (0,+\infty]^2$, $s,\beta\in\mathbb{R}^2$. The following assertions are equivalent
\begin{enumerate}
\item $f \in \mathcal{E}^{s,\alpha}_{p,q,|\log|^\beta}(\mathbb{R}^2)$
\item $\left( \sum_{j \ge 0} j^{-\beta q} 2^{jsq} \sum_{(j_1,j_2) \in \Gamma_j} 2^{-(j_1+j_2)q/p} \left( \sum_{(k_1,k_2) \in \mathbb{Z}^2} \vert c_{j_1,j_2,k_1,k_2} \vert^p \right)^{\frac{q}{p}}\right)^{\frac{1}{q}} <+\infty$.
\item $\left( \sum_{(j_1,j_2) \in \mathbb{N}_0^2} \left(\max(\frac{j_1}{\alpha_1}, \frac{j_2}{\alpha_2})\right)^{-\beta q} 2^{\left(\max(\frac{j_1}{\alpha_1}, \frac{j_2}{\alpha_2})s-\frac{j_1+j_2}{p}\right)q} \left( \sum_{(k_1,k_2) \in \mathbb{Z}^2} \vert c_{j_1,j_2,k_1,k_2} \vert^p \right)^{\frac{q}{p}} \right)^{\frac{1}{q}} <+\infty$.
\end{enumerate}
\end{proposition}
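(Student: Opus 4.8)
The definition of the intermediate spaces $\mathcal{E}^{s,\alpha}_{p,q,|\log|^\beta}$ is phrased entirely in terms of the $L^p$ norms of the hyperbolic Littlewood--Paley blocks $\Delta_{j_1,j_2}f$, whereas assertions (2) and (3) are phrased in terms of the hyperbolic wavelet coefficients $c_{j_1,j_2,k_1,k_2}$. The whole proposition therefore rests on one local fact: for each fixed pair $(j_1,j_2)$, the quantity $\Vert\Delta_{j_1,j_2}f\Vert_p$ is comparable to $2^{-(j_1+j_2)/p}\Vert c_{j_1,j_2,\cdot,\cdot}\Vert_{\ell^p}$, up to contributions from finitely many neighbouring scales. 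The plan is to establish this local equivalence first, then to feed it into the definition of $\mathcal{E}^{s,\alpha}_{p,q,|\log|^\beta}$ so as to prove that (1) and (2) are equivalent, and finally to reindex the resulting double sum to obtain (3).

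First I would reduce to dimension one. For the Meyer wavelet, the Fourier transform of $\psi(2^{m}\cdot-k)$ is supported in the annulus $2^{m}[2\pi/3,8\pi/3]$, which meets the frequency support $\{2^{\ell-1}\le|\xi|\le 2^{\ell+1}\}$ of the $1$D block $\theta_\ell$ only when $\ell$ belongs to a fixed bounded neighbourhood $N(m)$ of $m$. Using the exact Meyer reconstruction together with the almost disjoint spatial supports of the translates $\psi(2^{m}\cdot-k)$, one gets the classical one--dimensional equivalence $\Vert\Delta_\ell g\Vert_{L^p(\mathbb{R})}\sim 2^{-\ell/p}(\sum_k|c_{\ell,k}|^p)^{1/p}$ for band--limited $g$, valid for every $0<p\le\infty$ (with the usual $\ell^\infty$ modification when $p=\infty$). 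Tensorising the two factors of $\psi_{j_1,j_2,k_1,k_2}$ and applying this in each variable — where the frequency matching is exactly what the support properties of Section~\ref{sec:p1} guarantee — yields, for a fixed neighbourhood $N(j_1)\times N(j_2)$ of $(j_1,j_2)$,
\[
2^{-(j_1+j_2)/p}\Vert c_{j_1,j_2,\cdot,\cdot}\Vert_{\ell^p}\le C\sum_{(m_1,m_2)\in N(j_1)\times N(j_2)}\Vert\Delta_{m_1,m_2}f\Vert_p\,,
\]
and symmetrically $\Vert\Delta_{j_1,j_2}f\Vert_p\le C\sum_{(m_1,m_2)\in N(j_1)\times N(j_2)}2^{-(m_1+m_2)/p}\Vert c_{m_1,m_2,\cdot,\cdot}\Vert_{\ell^p}$, the second inequality following from the standard molecule estimates for $\Delta_{j_1,j_2}\psi_{m_1,m_2,k_1,k_2}$.

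Next I would pass from this term--by--term estimate to the equivalence of (1) and (2). Summing over $(j_1,j_2)\in\Gamma_j(\alpha)$ and weighting by $j^{-\beta q}2^{jsq}$, the scale--neighbour corrections are harmless: the set $N(j_1)\times N(j_2)$ attached to a point of $\Gamma_j(\alpha)$ is contained in $\Gamma_{j-1}(\alpha)\cup\Gamma_j(\alpha)\cup\Gamma_{j+1}(\alpha)$, each point is counted a bounded number of times, and the weights $j^{-\beta q}2^{jsq}$ attached to three consecutive values of $j$ are mutually comparable. Combining this with the (quasi-)triangle inequalities~(\ref{ineq:cvxity1})--(\ref{ineq:cvxity2}) and with $\mathrm{Card}(\Gamma_j(\alpha))\le Cj$, exactly as in the proof of Theorem~\ref{th:LPChar}, shows that the Littlewood--Paley expression defining $\mathcal{E}^{s,\alpha}_{p,q,|\log|^\beta}$ is finite if and only if the wavelet expression in (2) is finite.

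Finally, the equivalence of (2) and (3) is a reindexing. The sets $\Gamma_j(\alpha)$ are built so that $(j_1,j_2)\in\Gamma_j(\alpha)$ forces $j=\lfloor\max(j_1/\alpha_1,j_2/\alpha_2)\rfloor+O(1)$, while conversely each $(j_1,j_2)\in\mathbb{N}^2$ lies in $\Gamma_j(\alpha)$ for at most a bounded number of consecutive $j$. Replacing the outer index $j$ by $\max(j_1/\alpha_1,j_2/\alpha_2)$ therefore alters the weight $j^{-\beta q}2^{jsq}$ only by a bounded factor and the total by the bounded multiplicity, turning (2) into (3). I expect the genuine difficulty to be concentrated in the first step: the Littlewood--Paley rectangles and the Meyer wavelet bands do not coincide, and the whole argument hinges on controlling this mismatch through a fixed number of neighbouring scales and on verifying that $\Gamma_{j-1}(\alpha)\cup\Gamma_j(\alpha)\cup\Gamma_{j+1}(\alpha)$ is wide enough to absorb it; once this bookkeeping is settled, the summations are routine.
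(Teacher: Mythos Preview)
Your plan is correct and leads to a valid proof, but the route differs from the paper's. You exploit a feature specific to the Meyer system: the Fourier support of $\psi(2^{m}\cdot-k)$ meets the support of $\theta_\ell$ only for $\ell$ in a bounded window around $m$, so $\Delta_{j_1,j_2}$ interacts with the wavelet scale $(m_1,m_2)$ only when $(m_1,m_2)$ lies in a fixed finite neighbourhood of $(j_1,j_2)$. This lets you establish the two inequalities between $\Vert\Delta_{j_1,j_2}f\Vert_p$ and $2^{-(j_1+j_2)/p}\Vert c_{j_1,j_2,\cdot,\cdot}\Vert_{\ell^p}$ symmetrically, each with a finite sum of neighbouring terms, and then absorb the neighbour shifts into $\Gamma_{j-1}\cup\Gamma_j\cup\Gamma_{j+1}$. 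The paper instead treats the two implications asymmetrically and by heavier machinery: for $(1)\Rightarrow(2)$ it invokes a rectangular sampling lemma (an adaptation of Frazier--Jawerth's Lemma~2.4) applied to the band--limited function $\Delta_{j_1,j_2}f$, while for $(2)\Rightarrow(1)$ it runs the full Frazier--Jawerth molecular argument, summing over \emph{all} scales $(m_1,m_2)$ via the decay estimate~(\ref{e:ineqL33}), proving an analogue of their Lemma~3.4 (here Lemma~\ref{lem:34}), and closing the double sum by Young's inequality on $\mathbb{Z}^2$. Your argument is shorter and more transparent but tied to compactly Fourier--supported wavelets; the paper's argument is longer but would survive replacing the Meyer wavelet by any smooth wavelet with enough vanishing moments. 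For $(2)\Leftrightarrow(3)$ the two proofs coincide.

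Two small remarks on your write--up. First, the appeal to $\mathrm{Card}(\Gamma_j(\alpha))\le Cj$ and to the passage in the proof of Theorem~\ref{th:LPChar} is out of place here: those are the ingredients that produce the logarithmic loss in Theorem~\ref{th:LPChar}, whereas Proposition~\ref{pro:spacesEsp} is an exact equivalence. In your argument the relevant cardinality is that of $N(j_1)\times N(j_2)$, which is bounded independently of $j$, so the quasi--triangle inequalities~(\ref{ineq:cvxity1})--(\ref{ineq:cvxity2}) introduce only an absolute constant. Second, the phrase ``almost disjoint spatial supports of the translates $\psi(2^m\cdot-k)$'' is inaccurate for Meyer wavelets, which are not compactly supported; what you actually need (and what suffices) is their rapid decay, which yields the $\ell^p$--$L^p$ comparison for $\sum_k c_k\psi(2^m\cdot-k)$ by the standard almost--orthogonality/molecule argument.
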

Let us prove Proposition~\ref{pro:spacesEsp}. The equivalence between assertions $(2)$ and $(3)$  holds since for any $(j_1,j_2)\in \Gamma_j(\alpha)$, one has $\max (\frac{j_1}{\alpha_1}, \frac{j_2}{\alpha_2})+2-2\leq j \leq \max (\frac{j_1}{\alpha_1}, \frac{j_2}{\alpha_2})+2$ and $\cup \Gamma_j = \mathbb{N}_0^2$. The crucial point is the equivalence between assertions $(1)$ and $(2)$ .\\

\noindent{\bf Proof of implication $(1) \Rightarrow (2)$ of Proposition~\ref{pro:spacesEsp}}\\
The proof of this implication relies on the following sampling lemma which is a adaptation of Lemma~2.4 of~\cite{frazier:jawerth:1985} in the case of rectangular support.
\begin{lemma}\label{lem:sample}
Let $p\in (0,+\infty]$ and $j=(j_1,j_2)\in\mathbb{N}_0^2$. Suppose $g\in\mathcal{S}'(\mathbb{R}^2)$ and $\mathrm{supp}(\widehat{g})\subset\{\xi,\,|\xi_1|\leq 2^{j_1+1}\mbox{ and }|\xi_2|\leq 2^{j_2+1}\}$. Then there exists $C>0$ such that
$$
\left( \sum_{(k_1,k_2) \in \mathbb{Z}^2} 2^{-(j_1+j_2)} \left\vert g\left(\frac{k_1}{2^{j_1}}, \frac{k_2}{2^{j_2}}\right) \right\vert^p \right)^{1/p} \le C \Vert g \Vert_{L^p}\;.
$$
\end{lemma}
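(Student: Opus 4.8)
The plan is to strip the scale parameters by an anisotropic dilation, reducing the statement to a single universal sampling inequality at unit scale, and then to establish that inequality through a reproducing-kernel argument, treating $p\ge 1$ and $0<p<1$ separately. \emph{Step 1 (reduction to unit scale).} I would set $G(x_1,x_2)=g(2^{-j_1}x_1,2^{-j_2}x_2)$, so that $G(k_1,k_2)=g(k_1 2^{-j_1},k_2 2^{-j_2})$. A change of variables gives $\|G\|_{L^p}^p=2^{j_1+j_2}\|g\|_{L^p}^p$, while $\widehat G(\xi)=2^{j_1+j_2}\widehat g(2^{j_1}\xi_1,2^{j_2}\xi_2)$ is supported in $[-2,2]^2$. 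Thus the weighted sample sum on the left equals $2^{-(j_1+j_2)/p}\big(\sum_k|G(k)|^p\big)^{1/p}$ and $\|g\|_{L^p}=2^{-(j_1+j_2)/p}\|G\|_{L^p}$, the normalizing powers cancel, and it suffices to prove
\[
\Big(\sum_{(k_1,k_2)\in\mathbb{Z}^2}|G(k_1,k_2)|^p\Big)^{1/p}\le C\,\|G\|_{L^p},
\]
with $C$ depending only on $p$, for every $G$ with $\mathrm{supp}(\widehat G)\subset[-2,2]^2$.

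\emph{Step 2 (reproducing formula and the case $p\ge1$).} I would fix once and for all a function $\Phi\in\mathcal{S}(\mathbb{R}^2)$ with $\widehat\Phi\equiv1$ on $[-2,2]^2$ and $\widehat\Phi$ compactly supported. Since $\widehat G\,\widehat\Phi=\widehat G$, one has the reproducing identity $G=G*\Phi$, that is $G(k)=\int G(y)\Phi(k-y)\,dy$; this is classical, as a band-limited $L^p$ function lies in every $L^q$, $q\ge p$, by Nikolskii's inequality, hence is continuous and bounded. For $p\ge1$, Hölder's inequality against the finite measure $|\Phi(k-y)|\,dy$ yields $|G(k)|^p\le C\int|G(y)|^p\,|\Phi(k-y)|\,dy$. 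Summing over $k$ and using that the Schwartz decay of $\Phi$ makes $\sum_k|\Phi(k-y)|\le C$ uniformly in $y$ (comparison with $\sum_k(1+|k-y|)^{-N}$, $N>2$), I obtain $\sum_k|G(k)|^p\le C\|G\|_{L^p}^p$.

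\emph{Step 3 (the case $0<p<1$).} Here Hölder is unavailable, and bounding the supremum of $|G|$ over a unit cube by the suprema over neighbouring cubes leads to a circular estimate; the correct tool is the Peetre maximal function $G^*_a(x)=\sup_{z}|G(x-z)|/(1+|z|)^a$. The key estimate, which genuinely exploits the band-limitedness of $G$ through $G=G*\Phi$ and the rapid decay of $\Phi$ and its derivatives, is that for $0<r<p$ and $a>2/r$ one has the pointwise domination $G^*_a(x)\le C\,\big(M(|G|^r)(x)\big)^{1/r}$, where $M$ denotes the Hardy--Littlewood maximal operator. Granting this, for $x$ in the centred unit cube $Q_k$ one has $|G(k)|\le C\,G^*_a(x)$, so $|G(k)|^p\le C\int_{Q_k}(G^*_a)^p$; summing over $k$ gives $\sum_k|G(k)|^p\le C\|G^*_a\|_{L^p}^p$. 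Choosing $r<p$ makes $p/r>1$, so $M$ is bounded on $L^{p/r}$, whence $\|G^*_a\|_{L^p}^p\le C\int\big(M(|G|^r)\big)^{p/r}\le C\int|G|^p=C\|G\|_{L^p}^p$.

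I expect the crux to be the Peetre inequality $G^*_a\le C\,(M(|G|^r))^{1/r}$ invoked in Step 3: its proof is exactly where the spectral support of $G$ must be used (via Bernstein-type control of $G$ together with the Schwartz decay of $\Phi$ and $\nabla\Phi$ coming from $G=G*\Phi$), and the constraints $r<p$ and $a>2/r$ are precisely what let me close the estimate through the $L^{p/r}$-boundedness of $M$, thereby avoiding any integration of $M(|G|^p)$, which need not be integrable.
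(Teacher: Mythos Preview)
Your proof is correct. The reduction to unit scale, the reproducing identity $G=G*\Phi$, the H\"older argument for $p\ge 1$, and the Peetre maximal function route for $0<p<1$ (with the choice $r<p$, $a>2/r$ and the $L^{p/r}$-boundedness of the Hardy--Littlewood maximal operator) are all standard and correctly applied.

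The paper takes a different, more elementary path that avoids the Peetre maximal function and Hardy--Littlewood theory altogether. Working directly at scale $(j_1,j_2)$, it derives a Shannon-type sampling series: expanding the periodized reproducing kernel in a Fourier series yields, for every shift $y$,
\[
g(x+y)=2^{-(j_1+j_2)}\sum_{\ell\in\mathbb{Z}^2} g(2^{-j}\ell+y)\,\psi_j(x-2^{-j}\ell),
\]
with $\psi_j$ Schwartz. Taking $x$ near a lattice point and using the rapid decay of $\psi_j$ gives $|g(2^{-j}k)|\le C\sum_\ell |g(2^{-j}\ell+y)|/(1+|\ell|)^M$. Then the case $p<1$ is handled by the $p$-subadditivity $(\sum a_\ell)^p\le\sum a_\ell^p$ (no H\"older needed), and the free parameter $y$ is integrated over one dyadic cell before summing in $k$; translation invariance turns the right-hand side into a multiple of $\|g\|_{L^p}^p$. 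This follows Frazier--Jawerth (their Lemma~2.4), merely adapted to rectangular supports. What the paper's argument buys is self-containment: no maximal function machinery. What your argument buys is that it plugs directly into the standard Triebel framework and generalizes verbatim to any setting where the Peetre inequality is available.
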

\begin{proof}
Let $\psi\in\mathcal{S}(\mathbb{R}^2)$ be such that $\mathrm{supp}(\widehat{\psi})\subset \{\xi,\,\max(|\xi_1|,|\xi_2|)\leq \pi\}$ and $\widehat{\psi}\equiv 1$ on $[-2,2]^2$. Set $\psi_j(x)=2^{j_1+j_2}\psi(2^{j_1}x_1,2^{j_2}x_2)$. One  has  $\widehat{\psi_j}\equiv 1$ on $[-2^{j_1+1},2^{j_1+1}]\times [-2^{j_2+1},2^{j_2+1}]$.

By assumption $\mathrm{supp}(\widehat{g})\subset[-2^{j_1+1},2^{j_1+1}]\times [-2^{j_2+1},2^{j_2+1}]$, so that for any $x=(x_1,x_2)\in\mathbb{R}^2$ and any fixed $y=(y_1,y_2)\in\mathbb{R}^2$
\[
g(x+y)=(\psi_j\star g)(x+y)=(2\pi)^{-2}\int_{\xi_1=-2^{j_1+1}}^{2^{j_1+1}}\int_{\xi_2=-2^{j_2+1}}^{2^{j_2+1}}\widehat{\psi_j}(\xi)\widehat{g}(\xi)\rme^{\rmi x\xi}\rme^{\rmi y\xi}\rmd \xi\;.
\]

Denote $\widehat{h_j}$ the periodic extension of $\widehat{\psi_j}$ with period $2^{j_i+1}\pi$ for each variable $\xi_i$ ($i=1,2$). One has
\begin{equation}\label{e:g}
g(x+y)=(2\pi)^{-2}\int_{\xi_1=-2^{j_1+1}}^{2^{j_1+1}}\int_{\xi_2=-2^{j_2+1}}^{2^{j_2+1}}\left(\widehat{h_j}(\xi)\rme^{\rmi x\xi}\right)\left(\widehat{g}(\xi)\rme^{\rmi y\xi}\right)\rmd \xi\;.
\end{equation}
Using an expansion of $\widehat{h_j}\rme^{\rmi x\xi}$ in two dimensional Fourier series, it comes
\begin{eqnarray*}
&& \widehat{h_j}(\xi)\rme^{\rmi x\xi}  \\
& = & \sum_{(\ell_1,\ell_2)\in\mathbb{Z}^2}\left(\int_{\xi_1=-2^{j_1+1}\pi}^{2^{j_1+1}\pi}\int_{\xi_2=-2^{j_2+1}\pi}^{2^{j_2+1}\pi}\widehat{h_j}(\xi)\rme^{\rmi x\xi}\rme^{-\rmi 2^{-j_1}\ell_1\xi_1}\rme^{-\rmi 2^{-j_2}\ell_2\xi_2}\right)\rme^{\rmi 2^{-j_1}\ell_1\xi_1}\rme^{\rmi 2^{-j_2}\ell_2\xi_2}  \\
& =& \sum_{(\ell_1,\ell_2)\in\mathbb{Z}^2}\left(\int_{\xi_1=-2^{j_1+1}\pi}^{2^{j_1+1}\pi}\int_{\xi_2=-2^{j_2+1}\pi}^{2^{j_2+1}\pi}\widehat{\psi_j}(\xi)\rme^{\rmi x\xi}\rme^{-\rmi 2^{-j_1}\ell_1\xi_1}\rme^{-\rmi 2^{-j_2}\ell_2\xi_2}\right)\rme^{\rmi 2^{-j_1}\ell_1\xi_1}\rme^{\rmi 2^{-j_2}\ell_2\xi_2} \\
&=& 2^{-(j_1+j_2)}\sum_{(\ell_1,\ell_2)\in\mathbb{Z}^2}\psi_j(x-2^{-j}\ell)\rme^{\rmi 2^{-j_1}\ell_1\xi_1}\rme^{\rmi 2^{-j_2}\ell_2\xi_2} \;,
\end{eqnarray*}
where for $j=(j_1,j_2)$ and $\ell=(\ell_1,\ell_2)$, the notation $2^{-j}\ell= (2^{-j_1}\ell_1,2^{-j_2}\ell_2)$ is used. Replacing $\widehat{h_j}(\xi)\rme^{\rmi x\xi}$ with the last sum in Equation~(\ref{e:g}) yields that for any $x=(x_1,x_2)\in\mathbb{R}^2$ and any fixed $y=(y_1,y_2)\in\mathbb{R}^2$
\begin{eqnarray*}
& & g(x+y)\\
&=&\frac{2^{-(j_1+j_2)}}{4\pi^2}\sum_{(\ell_1,\ell_2)\in\mathbb{Z}^2}\left(\int_{\xi_1=-2^{j_1+1}}^{2^{j_1+1}}\int_{\xi_2=-2^{j_2+1}}^{2^{j_2+1}}\psi_j(x-2^{-j}\ell)\rme^{\rmi 2^{-j_1}\ell_1\xi_1}\rme^{\rmi 2^{-j_2}\ell_2\xi_2}\left(\widehat{g}(\xi)\rme^{\rmi y\xi}\right)\rmd \xi\right)\\
&=&2^{-(j_1+j_2)}\sum_{(\ell_1,\ell_2)\in\mathbb{Z}^2}g(2^{-j}\ell+y)\psi_j(x-2^{-j}\ell)\;.
\end{eqnarray*}
Hence for all $y\in \lambda_{j_1,j_2,k_1,k_2}=[2^{-j_1}k_1,2^{-j_1}(k_1+1))\times [2^{-j_2}k_2,2^{-j_2}(k_2+1))$
\begin{eqnarray*}
\sup_{|z_1-2^{-j_1}k_1|\leq 2^{-j_1},|z_2-2^{-j_2}k_2|\leq 2^{-j_2}}|g(z)|&\leq& \sup_{|x_1|\leq 2^{-j_1}\sqrt{2},|x_2|\leq 2^{-j_2}\sqrt{2}}|g(x+y)|\\&\leq& 2^{-(j_1+j_2)}\sum_{(\ell_1,\ell_2)\in\mathbb{Z}^2}|g(2^{-j}\ell+y)|\cdot\sup_{\max(2^{j_1}|x_1|,2^{j_2}|x_2|)\leq \sqrt{2}}|\psi_j(x-2^{-j}\ell)|\\
&\leq &2^{-(j_1+j_2)}\sum_{(\ell_1,\ell_2)\in\mathbb{Z}^2}|g(2^{-j}\ell+y)|\cdot\frac{1}{(1+|\ell|)^M}
\end{eqnarray*}
where the last inequality follows from the fast decay of $\psi$. Take $M$ sufficiently large and use either the p triangular inequality either the H\"{o}lder inequality according whether $p\in (0,1)$ or $p\in [1,+\infty]$. Hence, one has
\[
|g(2^{-j_1}k_1,2^{-j_2}k_2)|^p\leq\sup_{|z_1-2^{-j_1}k_1|\leq 2^{-j_1},|z_2-2^{-j_2}k_2|\leq 2^{-j_2}}|g(z)|^p\leq C 2^{-(j_1+j_2)}\sum_{(\ell_1,\ell_2)\in\mathbb{Z}^2}|g(2^{-j}\ell+y)|^p\cdot\frac{1}{(1+|\ell|)^{M'}}\;,
\]
for some $M'>1$.
An integration over $y\in \lambda_{j_1,j_2,k_1,k_2}$ leads to
\[
2^{-(j_1+j_2)}|g(2^{-j_1}k_1,2^{-j_2}k_2)|^p\leq\sum_{(\ell_1,\ell_2)\in\mathbb{Z}^2}\frac{1}{(1+|\ell|)^{M'}}\int_{\lambda_{j_1,j_2,k_1,k_2}}|g(y)|^p \rmd y
\]
and a sum over $k\in \mathbb{Z}^2$ gives
\[
\sum_k 2^{-(j_1+j_2)}|g(2^{-j_1}k_1,2^{-j_2}k_2)|^p\leq \sum_k\sum_{(\ell_1,\ell_2)\in\mathbb{Z}^2}\frac{1}{(1+|\ell|)^{3}}\int_{\lambda_{j_1,j_2,k_1,k_2}}|g(y)|^p \rmd y
\]
which ends the proof of Lemma~\ref{lem:sample}.
\end{proof}

Now, observe that $c_{j_1,j_2,k_1,k_2}=\Delta_{j_1,j_2}f(2^{-j_1}k_1, 2^{-j_2}k_2)$. By Lemma~\ref{lem:sample} applied to $g=\Delta_{j_1,j_2}f\in \mathcal{S}(\mathbb{R}^2)$, one has
$$
\sum_{(k_1,k_2) \in \mathbb{Z}^2}|c_{j_1,j_2,k_1,k_2}|^p=\sum_{(k_1,k_2) \in \mathbb{Z}^2}  \vert \Delta_{j_1,j_2}f(2^{-j_1}k_1, 2^{-j_2}k_2) \vert^ p \le C 2^{j_1} 2^{j_2}\Vert \Delta_{j_1,j_2} f \Vert_p^p\;,
$$
which is the desired wavelet characterization. \\

\noindent{\bf Proof of implication $(2) \Rightarrow (1)$ of Proposition~\ref{pro:spacesEsp}}\\
To obtain the converse implication, the same approach as in the proof of Theorem~3.1 of~\cite{frazier:jawerth:1985} is followed.

Since $\phi_{j_1,j_2}$ and $\psi_{m_1,m_2,k_{1},k_1}$ are both defined as a tensorial product,  Lemma~3.3 of ~\cite{frazier:jawerth:1985} can be applied: there exists some $C>0$ such that for any $\alpha>0$ and for all $x=(x_1,x_2)\in\mathbb{R}^2$ one has
\begin{equation}\label{e:ineqL33}
|\phi_{j_1,j_2}\star \psi_{m_1,m_2,k_{1},k_1}(x)|\leq C \frac{2^{-(|j_1-m_1|+|j_2-m_2|)(M+3)}}{(1+2^{\inf(j_1,m_1)}|x_1-2^{-m_1}k_1|)^{\alpha}(1+2^{\inf(j_2,m_2)}|x_2-2^{-m_2}k_2|)^{\alpha}}\;,
\end{equation}
where $M$ denotes the number of vanishing moments of the wavelets.

A lemma analogous to Lemma~3.4 of~\cite{frazier:jawerth:1985}~ is now proved:
\begin{lemma}\label{lem:34}
Let $p\in [1,+\infty]$, $\ell_1,\ell_2,m_1,m_2$ integers such that $\ell_1\leq m_1$ and $\ell_2\leq m_2$. We are also given some functions $g_{k_1,k_2}$ satisfying the following inequality for some $C>0$~:
\begin{equation}\label{e:ineqL34a}
\forall x=(x_1,x_2)\in\mathbb{R}^2,\,|g_{k_1,k_2}(x)|\leq \frac{C}{(1+2^{\ell_1}|x_1-2^{-m_1}k_1|)^2(1+2^{\ell_2}|x_2-2^{-m_2}k_2|)^2}\;.
\end{equation}
Set
\[
F=\sum_{k=(k_1,k_2)\in\mathbb{Z}^2}d_{k_1,k_2}g_{k_1,k_2}
\]
Then
\begin{equation}\label{e:ineqL34b}
\|F\|_{L^p}\leq C2^{-(m_1+m_2)/p}2^{m_1-\ell_1}2^{m_2-\ell_2}\cdot\left(\sum_{k=(k_1,k_2)\in\mathbb{Z}^2}|d_{k_1,k_2}|^p\right)^{1/p}\;.
\end{equation}
\end{lemma}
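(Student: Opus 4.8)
The plan is to work entirely from the factorized envelope provided by hypothesis~(\ref{e:ineqL34a}), which is the whole point of the product form of the bound. Writing
\[
w_{k_1,k_2}(x)=\frac{1}{(1+2^{\ell_1}|x_1-2^{-m_1}k_1|)^2(1+2^{\ell_2}|x_2-2^{-m_2}k_2|)^2},
\]
one has $|F(x)|\le C\sum_{k}|d_{k_1,k_2}|\,w_{k_1,k_2}(x)$, so it suffices to estimate the $L^p$ norm of $\sum_k|d_k|\,w_k$. Two scalar quantities attached to $w_k$ will drive the argument: its integral $\int_{\mathbb{R}^2}w_{k_1,k_2}(x)\,\rmd x$ (which is independent of $k$) and the uniform control of the overlap $\sup_x\sum_k w_{k_1,k_2}(x)$. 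Since $w_k$ factorizes as a product of a function of $x_1$ and a function of $x_2$, both reduce to one-dimensional computations that then tensorize.

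First I would establish the two one-dimensional facts. The integral $\int_{\mathbb{R}}(1+2^{\ell}|t-2^{-m}k|)^{-2}\,\rmd t=2^{-\ell}\int_{\mathbb{R}}(1+|u|)^{-2}\,\rmd u=C2^{-\ell}$ is immediate, giving $\int w_k=C2^{-\ell_1-\ell_2}$. The key estimate is the overlap bound
\[
\sum_{k\in\mathbb{Z}}\frac{1}{(1+2^{\ell}|t-2^{-m}k|)^2}\le C\,2^{m-\ell},\qquad \ell\le m,
\]
uniformly in $t$: after the substitution $u=2^\ell t$ the centers $2^{\ell-m}k$ are spaced by $\delta=2^{\ell-m}\le 1$, and comparing the sum of the decreasing tails with $\delta^{-1}\int_{\mathbb{R}}(1+|u|)^{-2}\,\rmd u$ gives $C\delta^{-1}=C2^{m-\ell}$. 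The hypothesis $\ell\le m$ is used here precisely to guarantee $\delta\le 1$, so that the sum is comparable to its integral. Tensorizing yields $\sup_x\sum_k w_{k_1,k_2}(x)\le C\,2^{m_1-\ell_1}2^{m_2-\ell_2}=:A$.

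Finally I would combine these for $1\le p<\infty$ by a convexity (Jensen/H\"older) step: viewing $w_k(x)/\sum_k w_k(x)$ as a probability weight and using convexity of $t\mapsto t^p$,
\[
\Big(\sum_k|d_k|\,w_k(x)\Big)^p\le\Big(\sum_k w_k(x)\Big)^{p-1}\sum_k|d_k|^p w_k(x)\le A^{p-1}\sum_k|d_k|^p w_k(x).
\]
Integrating in $x$, using $\int w_k=C2^{-\ell_1-\ell_2}$ and Fubini, gives $\|F\|_{L^p}^p\le C\,A^{p-1}2^{-\ell_1-\ell_2}\sum_k|d_k|^p$, hence
\[
\|F\|_{L^p}\le C\,A^{1-1/p}2^{-(\ell_1+\ell_2)/p}\Big(\sum_k|d_k|^p\Big)^{1/p}.
\]
Substituting $A=2^{m_1-\ell_1+m_2-\ell_2}$ and simplifying the exponents reproduces exactly the factor $2^{-(m_1+m_2)/p}2^{m_1-\ell_1}2^{m_2-\ell_2}$ claimed in~(\ref{e:ineqL34b}); the endpoints $p=1$ (plain triangle inequality plus $\int w_k$) and $p=\infty$ (the overlap bound $A$) are the two degenerate cases of this computation. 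The only genuinely delicate point is the overlap estimate together with the exponent bookkeeping that makes the powers of $2$ collapse to the stated form; everything else is a routine use of the product structure. Alternatively, the same conclusion follows by Riesz--Thorin interpolation of the linear map $(d_k)\mapsto F$ between its $\ell^1\to L^1$ and $\ell^\infty\to L^\infty$ bounds, which are precisely the two endpoints above.
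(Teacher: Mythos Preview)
Your argument is correct and takes a genuinely different route from the paper's. The paper first discretizes the $L^p$ integral over the partition of $\mathbb{R}^2$ by the hyperbolic dyadic rectangles $\lambda_{m_1,m_2,k'_1,k'_2}$, bounds the integrand on each rectangle by the supremum, and thereby reduces the problem to a discrete $\ell^p$ convolution estimate $\|s*s'\|_{\ell^p}\le\|s\|_{\ell^p}\|s'\|_{\ell^1}$, where $s_k=|d_k|$ and $s'_{k}=(1+2^{\ell_1-m_1}|k_1|)^{-2}(1+2^{\ell_2-m_2}|k_2|)^{-2}$; the factor $2^{m_1-\ell_1}2^{m_2-\ell_2}$ then comes from the $\ell^1$ norm of $s'$, and $2^{-(m_1+m_2)}$ from the measure of the rectangles. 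You instead stay in the continuum and run a Schur--type argument: one endpoint ($p=1$) uses $\int w_k=C2^{-\ell_1-\ell_2}$, the other ($p=\infty$) uses the overlap bound $\sup_x\sum_k w_k\le C2^{m_1-\ell_1+m_2-\ell_2}$, and Jensen (or interpolation) interpolates. Both approaches hinge on the same one-dimensional lattice sum $\sum_k(1+2^{\ell-m}|k|)^{-2}\lesssim 2^{m-\ell}$, so neither is deeper than the other; your version avoids the discretization bookkeeping and makes the tensor structure more transparent, while the paper's version stays closer to the Frazier--Jawerth template it is adapting.
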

\begin{proof}
By definition of the $L^p$--norm, one has~:
\begin{eqnarray*}
\|F\|_{L^p}^p&=&\int_{\mathbb{R}^2}\left|\sum_{k=(k_1,k_2)\in\mathbb{Z}^2}d_{k_1,k_2}g_{k_1,k_2}(x)\right|^p\rmd x\\
&\leq&\sum_{k'=(k'_1,k'_2)\in\mathbb{Z}^2}\int_{\lambda_{m_1,m_2,k'_1,k'_2}}\left|\sum_{k=(k_1,k_2)\in\mathbb{Z}^2}d_{k_1,k_2}g_{k_1,k_2}(x)\right|^p\rmd x\;,
\end{eqnarray*}
where the hyperbolic dyadic cube $\lambda_{m_1,m_2,k'_1,k'_2}$ are defined in~(\ref{e:dyadiccube}). Observe now that, by the usual triangular inequality and by inequality~(\ref{e:ineqL34a}), there exists some $C>0$ such that for any $(k_1,k_2)\in\mathbb{Z}^2$, $(k'_1,k'_2)\in\mathbb{Z}^2$
\[
\sup_{x\in \lambda_{m_1,m_2,k'_1,k'_2}}\left|\sum_{k=(k_1,k_2)\in\mathbb{Z}^2}d_{k_1,k_2}g_{k_1,k_2}(x)\right|\leq \sum_{k=(k_1,k_2)\in\mathbb{Z}^2}\frac{|d_{k_1,k_2}|}{\prod_{i=1,2}(1+2^{\ell_i}|2^{-m_i}k'_i-2^{-m_i}k_i|)^2}
\]
Hence one has
\begin{eqnarray*}
\|F\|_{L^p}^p&\leq&C 2^{-(m_1+m_2)}\sum_{(k'_1,k'_2)\in\mathbb{Z}^2}
\left(\sum_{(k_1,k_2)\in\mathbb{Z}^2}\frac{|d_{k_1,k_2}|}{(1+2^{\ell_1-m_1}|k'_1-k_1|)^2(1+2^{\ell_2-m_2}|k'_2-k_2|)^2}\right)^p\;,
\end{eqnarray*}
Let us recall the usual convolution inequality, valid for any sequences $s,s'$ in $\ell^p(\mathbb{Z}^2)$ for $p\geq 1$,
\[
\|s*s'\|_{\ell_p(\mathbb{Z}^2)}^p\leq \|s\|_{\ell_p(\mathbb{Z}^2)}^p\|s'\|_{\ell^1(\mathbb{Z}^2)}^p\;.
\]
Applied to $s=|d_{k_1,k_2}|$ and $s'=(1+2^{\ell_1-m_1}|k'_1-k_1|)^{-2}(1+2^{\ell_2-m_2}|k'_2-k_2|)^{-2}$, it gives
\begin{eqnarray*}
\|F\|_{L^p}^p&\leq &C 2^{-(m_1+m_2)}\left(\sum_{(k_1,k_2)\in\mathbb{Z}^2}|d_{k_1,k_2}|^p\right)
\left(\sum_{(k'_1,k'_2)\in\mathbb{Z}^2}\frac{1}{(1+2^{\ell_1-m_1}|k'_1|)^2(1+2^{\ell_2-m_2}|k'_2|)^2}\right)^p\;,
\end{eqnarray*}
Recall now the classical result~:
\[
\sum_{k'=(k'_1,k'_2)\in\mathbb{Z}^2}\frac{1}{(1+2^{\ell_1-m_1}|k'_1|)^2(1+2^{\ell_2-m_2}|k'_2|)^2}\leq C2^{m_1-\ell_1}2^{m_2-\ell_2}
\]
Hence
\begin{eqnarray*}
\|F\|_{L^p}^p&\leq& C 2^{-(m_1+m_2)} 2^{(m_1-\ell_1)p}2^{(m_2-\ell_2)p}\left(\sum_{k=(k_1,k_2)\in\mathbb{Z}^2}|d_{k_1,k_2}|^p\right) \\
& &
 \times \left(\sum_{k'=(k'_1,k'_2)\in\mathbb{Z}^2}\frac{1}{\prod_{i=1,2}(1+2^{\ell_i-m_i}|k'_i|)^2}\right)^p\;,
\end{eqnarray*}
which directly yields the required result. It ends the proof of Lemma~\ref{lem:34}.
\end{proof}
Let us now go back to Implication $(2) \Rightarrow (1)$ of Proposition~\ref{pro:spacesEsp}. Two cases are considered: $p\in (0,1)$ and $p\in [1,+\infty]$.\\
Let us first assume that $p\in (0,1)$. \\
We have to bound $\|\Delta_{j_1,j_2}f\|_{L^p}=\|\phi_{j_1,j_2}\star f\|_{L^p}$. Observe that
\[
\phi_{j_1,j_2}\star f=\sum_{m_1,m_2}\sum_{k_1,k_2}c_{m_1,m_2,k_1,k_2}\left(\phi_{j_1,j_2}\star \psi_{m_1,m_2,k_1,k_2}\right)
\]
By the $p$--triangular inequality, it comes
\[
\forall x=(x_1,x_2)\in\mathbb{R}^2,\,|\phi_{j_1,j_2}\star f(x)|^p\leq \sum_{m_1,m_2}\sum_{k_1,k_2}|c_{m_1,m_2,k_1,k_2}|^p\left|(\phi_{j_1,j_2}\star \psi_{m_1,m_2,k_1,k_2})(x)\right|^p
\]
By Inequality~(\ref{e:ineqL33}), for all $x=(x_1,x_2)\in\mathbb{R}^2$, one has
\[
|\phi_{j_1,j_2}\star f(x)|^p\leq \sum_{m_1,m_2}\sum_{k_1,k_2}|c_{m_1,m_2,k_1,k_2}|^p\times\frac{2^{-p(|j_1-m_1|+|j_2-m_2|)(M+3)}}{(1+2^{\inf(j_1,m_1)}|x_1-2^{-m_1}k_1|)^{p\alpha}(1+2^{\inf(j_2,m_2)}|x_2-2^{-m_2}k_2|)^{p\alpha}}
\]
An integration over $\mathbb{R}^2$  implies that~:
\[
\|\phi_{j_1,j_2}\star f(x)\|^p_{L^p}\leq \sum_{m_1,m_2}\sum_{k_1,k_2}|c_{m_1,m_2,k_1,k_2}|^p 2^{-p(|j_1-m_1|+|j_2-m_2|)(M+3)}\;.
\]
Hence
\begin{eqnarray*}
\|f\|_{\mathcal{E}^{s,\alpha}_{p,q,|\log|^\beta}}^q&=&\sum_{j_1,j_2}\left(\max(\frac{j_1}{\alpha_1},\frac{j_2}{\alpha_2})\right)^{-\beta q}2^{q s\max(\frac{j_1}{\alpha_1},\frac{j_2}{\alpha_2})}\|\phi_{j_1,j_2}\star f(x)\|^q_{L^p}\\
&\leq& \sum_{j_1,j_2}\left(\sum_{m_1,m_2}\|c_{m_1,m_2,\cdot,\cdot}\|^p_{\ell^p} 2^{-p(|j_1-m_1|+|j_2-m_2|)(M+3)}\left(\max(\frac{j_1}{\alpha_1},\frac{j_2}{\alpha_2})\right)^{-\beta p}2^{p s\max(\frac{j_1}{\alpha_1},\frac{j_2}{\alpha_2})}\right)^{q/p}
\end{eqnarray*}
Set for any $t\in\mathbb{R}$, $(t)_+=\max(t,0)$ and
\[
\mathrm{sgn}(t)=\left\{\begin{array}{l}1\mbox{ if }t>0,\\0\mbox{ if }t=0,\\-1\mbox{ if }t<0.\end{array}
\right.
\]
Observe now that for any integers $j,m$
\[
m-(m-j)_+\leq j\leq (j-m)_+ + m\;,
\]
and that for any integers $j_1,j_2,m_1,m_2$
\[
\frac{\max(\frac{m_1}{\alpha_1},\frac{m_2}{\alpha_1})}{1-\frac{\max(\frac{(m_1-j_1)_+}{\alpha_1},\frac{(m_2-j_2)_+}{\alpha_1})}{\max(\frac{m_1}{\alpha_1},\frac{m_2}{\alpha_1})}}\leq \max(\frac{j_1}{\alpha_1},\frac{j_2}{\alpha_1})\leq \max(\frac{m_1}{\alpha_1},\frac{m_2}{\alpha_1})\left[1+\max(\frac{(j_1-m_1)_+}{\alpha_1},\frac{(j_2-m_2)_+}{\alpha_1})\right]\;,
\]
(except in the case $m_1=m_2=0$ which can be treated separately). Hence
\[
\|f\|_{\mathcal{E}^{s,\alpha}_{p,q,|\log|^\beta}}^q\leq \sum_{j_1,j_2}\left(\sum_{m_1,m_2} u_{m_1,m_2}v_{j_1-m_1,j_2-m_2}\cdot \right)^{q/p}\;,
\]
with
\[
s_{m_1,m_2}=\left(\max(\frac{m_1}{\alpha_1},\frac{m_2}{\alpha_2})\right)^{-\beta p}2^{p s\max(\frac{m_1}{\alpha_1},\frac{m_2}{\alpha_2})}\|c_{m_1,m_2,\cdot,\cdot}\|^p_{\ell^p}\;,
\]
and
\[
s'_{j_1,j_2}=2^{-p(|j_1|+|j_2|)(M+3)}[1+\max(\frac{(j_1)_+}{\alpha_1},\frac{(j_2)_+}{\alpha_2})]^{-\beta p}2^{\mathrm{sgn}(s) p s\max(\frac{(j_1)_+}{\alpha_1},\frac{(j_2)_+}{\alpha_2})}
\]
If $q/p>1$ Young's inequality can be applied, which states that for any sequences $s,s'$,
\[
\|s*s'\|_{\ell^{q/p}(\mathbb{Z}^2)}\leq \|s\|_{\ell^{q/p}(\mathbb{Z}^2)}\|s'\|_{\ell^1(\mathbb{Z}^2)}\;,
\]
whereas if $q/p\leq 1$  the usual $(q/p)$--triangle inequality and the usual inequality $\|s*s'\|_{\ell^1(\mathbb{Z}^2)}\leq \|s\|_{\ell^1(\mathbb{Z}^2)}\|s'\|_{\ell^1(\mathbb{Z}^2)}$ valid for any sequence $s,s'$ are applied. In any case, the following inequality is obtained
\begin{eqnarray*}
&&\|f\|_{\mathcal{E}^{s,\alpha}_{p,q,|\log|^\beta}}^q\\
&\leq& \left(\sum_{m_1,m_2}\left(\max(\frac{m_1}{\alpha_1},\frac{m_2}{\alpha_2})\right)^{-\beta p}2^{q s\max(\frac{m_1}{\alpha_1},\frac{m_2}{\alpha_2})}\|c_{m_1,m_2,\cdot,\cdot}|^p_{\ell^p}\right)\\ &&\times\sum_{j_1,j_2}\left(2^{-p(j_1+j_2)(M+3)}\left(\max(\frac{j_1}{\alpha_1},\frac{j_2}{\alpha_2})\right)^{-\beta p}2^{p s\max(\frac{j_1}{\alpha_1},\frac{j_2}{\alpha_2})}\right)^{\max(q/p,1)}\;.
\end{eqnarray*}
If the wavelets have sufficiently vanishing moments, we get that
\[
\|f\|_{\mathcal{E}^{s,\alpha}_{p,q,|\log|^\beta}}^q\leq C\left(\sum_{m_1,m_2}\left(\max(\frac{m_1}{\alpha_1},\frac{m_2}{\alpha_2})\right)^{-\beta p}2^{q s\max(\frac{m_1}{\alpha_1},\frac{m_2}{\alpha_2})}\|c_{m_1,m_2,\cdot,\cdot}|^p_{\ell^p}\right)\;,
\]
which is the required result.\\
\noindent We now consider the case $p\in [1,+\infty]$. In this case, observe that
\[
\Delta_{j_1,j_2}f=\sum_{k_1,k_2}d_{k_1,k_2}g_{k_1,k_2}
\]
with
\[
g_{k_1,k_2}=2^{(|j_1-m_1|+|j_2-m_2|)(M+3)}(\phi_{j_1,j_2}\star \psi_{m_1,m_2,k_1,k_2})\;,
\]
and
\[
d_{k_1,k_2}=2^{-(|j_1-m_1|+|j_2-m_2|)(M+3)}c_{j_1,j_2,k_1,k_2}\;.
\]
We set $\ell_1=\inf(j_1,m_1)$ and $\ell_2=\inf(j_2,m_2)$. Lemma~\ref{lem:34} gives
\[
\|\Delta_{j_1,j_2}f\|_{L^p}\leq C 2^{-p(|j_1-m_1|+|j_2-m_2|)(M+3)}2^{-(m_1+m_2)/p}\|c_{m_1,m_2,\cdot,\cdot}|^p_{\ell^p}2^{m_1-\ell_1}2^{m_2-\ell_2}
\]
Again two cases $q\leq 1$ and $q>1$ are distinguished and the same approach than in the case $p\in (0,1)$ is followed. It leads to the required conclusion.

\subsection{Proof of Theorem~\ref{th:WL}}\label{s:proofWC}
First a two--microlocal criterion is proved.
\begin{proposition}\label{pro:WC2micro}
\,
\begin{enumerate}
\item Assume that $f\in \mathcal{C}^{s,\alpha}(x_0)$. Then there exists some $C>0$ such that for any $(j_1,j_2,k_1,k_2)\in (\mathbb{N}\cup \{-1\})^2\times \mathbb{Z}^2$,
\begin{equation}\label{e:WC2micro}
|c_{j_1,j_2,k_1,k_2}|\leq C\min(2^{-\frac{j_1 s}{\alpha_1}}+\left|\frac{k_1}{2^{j_1}}-a\right|^{\frac{s}{\alpha_1}},2^{-\frac{j_2 s}{\alpha_2}}+\left|\frac{k_2}{2^{j_2}}-b\right|^{\frac{s}{\alpha_2}})\;.
\end{equation}
\item Conversely, assume that $f$ is uniformly H\"{o}lder and that~(\ref{e:WC2micro}) holds, then $f\in \mathcal{C}^{s,\alpha}_{|\log|^2}(x_0)$.
\end{enumerate}
\end{proposition}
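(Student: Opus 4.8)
The plan is to treat the two parts separately: the direct estimate (1) reduces to a statement about $g=f-P_{x_0}$ that is proven by change of variables plus the vanishing moments of $\psi$, while the converse (2) is obtained by resumming the wavelet series.

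For Part 1, I would first record that, since the Meyer wavelet $\psi$ has infinitely many vanishing moments while $P_{x_0}$ has $\alpha$-homogeneous degree $<s$ (hence bounded degree in each variable), the tensor factor $\psi(2^{j_1}\cdot-k_1)\,\psi(2^{j_2}\cdot-k_2)$ annihilates $P_{x_0}$ as soon as $j_1,j_2\ge 0$. Thus $c_{j_1,j_2,k_1,k_2}=2^{j_1+j_2}\langle g,\psi(2^{j_1}\cdot-k_1)\psi(2^{j_2}\cdot-k_2)\rangle$ with $g=f-P_{x_0}$, where $|g(x)|\le C|x-x_0|_\alpha^{s}$ near $x_0=(a,b)$ and $g$ has at most polynomial growth elsewhere. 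Writing $\delta_i=k_i2^{-j_i}-x_{0,i}$ and changing variables $u_i=2^{j_i}x_i-k_i$, the coefficient becomes $\int g(2^{-j_1}u_1+\lambda_1,2^{-j_2}u_2+\lambda_2)\,\psi(u_1)\psi(u_2)\,du$, so the rapid decay of $\psi$ localizes the integral near the centre $\lambda$. The mixed cases $j_1=-1$ or $j_2=-1$ reduce to a one-dimensional two-microlocal estimate in the remaining direction, the companion bound being then a trivial constant since $2^{s/\alpha_i}\ge 1$.

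The heart of the matter is to produce the minimum, i.e. to prove the two bounds $|c_{j_1,j_2,k_1,k_2}|\le C\,A_i$ with $A_i=2^{-j_i s/\alpha_i}+|\delta_i|^{s/\alpha_i}$ \emph{separately}; by the symmetry $1\leftrightarrow 2$ it suffices to establish $|c|\le CA_1$. A naive insertion of $|g(x)|\le C(|x_1-a|^{s/\alpha_1}+|x_2-b|^{s/\alpha_2})$ only yields the weaker estimate $|c|\le C(A_1+A_2)$, because taking absolute values destroys the cancellation carried by the signed factor $\psi(u_2)$. To recover the sharp first-coordinate bound I would keep $\psi(u_2)$ signed and exploit its vanishing moments in the second variable, splitting according to whether $|\delta_2|\lesssim 2^{-j_2}$ (the $x_2$-support meets the level $x_2=b$, so the crude contribution $2^{-j_2 s/\alpha_2}+|\delta_2|^{s/\alpha_2}$ is itself $\lesssim 2^{-j_2 s/\alpha_2}$ and can be absorbed) or $|\delta_2|\gg 2^{-j_2}$ (where the $x_2$-profile is smooth on the effective support and the vanishing moments turn its contribution into a remainder of order $2^{-j_2 N}|\delta_2|^{s/\alpha_2-N}$, negligible for $N$ large). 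In both regimes what survives is the first-coordinate profile $|2^{-j_1}u_1+\delta_1|^{s/\alpha_1}$ integrated against $\psi(u_1)$, which is $\lesssim 2^{-j_1 s/\alpha_1}+|\delta_1|^{s/\alpha_1}$. \textbf{This decoupling is the main obstacle:} the hypothesis at the single point $x_0$ controls values but not increments off the axes, so the cancellation must be extracted directly from the vanishing moments of $\psi$, and the two regimes above, together with the far-field terms controlled by the polynomial growth of $g$ against the super-polynomial decay of $\psi$, must be combined with care.

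For Part 2 I would use that, in the chosen $L^1$ normalization, the reconstruction formula reads $f(x)=\sum_{j_1,j_2\ge -1}\sum_{k_1,k_2} c_{j_1,j_2,k_1,k_2}\,\psi(2^{j_1}x_1-k_1)\psi(2^{j_2}x_2-k_2)$, since $\{2^{(j_1+j_2)/2}\psi(2^{j_1}\cdot-k_1)\psi(2^{j_2}\cdot-k_2)\}$ is orthonormal. I would define $P_{x_0}$ as the sum over the coarse scales of the Taylor polynomials at $x_0$ of the corresponding wavelet terms, so that $f-P_{x_0}$ is the sum of the fine-scale terms together with the coarse-scale Taylor remainders. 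Fixing $x$ near $x_0$ and letting $2^{-J}\approx|x-x_0|_\alpha$, I would estimate the contribution of each pair $(j_1,j_2)$: the decay of $\psi$ restricts the sum over $(k_1,k_2)$ to neighbours of $(2^{j_1}x_1,2^{j_2}x_2)$, for which $|\delta_i|\lesssim 2^{-j_i}+|x_i-x_{0,i}|$, so the hypothesis gives $|c|\lesssim\min(2^{-j_1 s/\alpha_1}+|x_1-a|^{s/\alpha_1},\,2^{-j_2 s/\alpha_2}+|x_2-b|^{s/\alpha_2})$, while coarse scales enter only through their Taylor remainders $\lesssim(2^{j_i}|x_i-x_{0,i}|)^{N}$.

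Summing these contributions is then bookkeeping organized along the sets $\Gamma_j(\alpha)$. Splitting each direction into the coarse regime $2^{-j_i}\gtrsim|x_i-x_{0,i}|$, where the remainders form a convergent geometric series, and the fine regime, where the $\min$-bound is summed, one recovers $C|x-x_0|_\alpha^{s}$ times a logarithmic factor. \textbf{The two powers of the logarithm arise precisely from the double summation:} at each anisotropic resolution $j$ there are $O(j)\approx O(|\log|x-x_0|_\alpha|)$ admissible pairs $(j_1,j_2)\in\Gamma_j(\alpha)$, contributing one logarithm, and the nested summation over resolutions contributes a second, yielding membership in $\mathcal C^{s,\alpha}_{|\log|^2}(x_0)$. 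The uniform H\"older assumption guarantees absolute convergence of the series and legitimizes the term-by-term manipulations.
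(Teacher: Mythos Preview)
For Part~1 there is a genuine gap, and it cannot be repaired along the lines you suggest. You rightly note that the naive bound gives only $|c|\le C(A_1+A_2)$, and you propose to recover the $\min$ by keeping $\psi(u_2)$ signed and exploiting its vanishing moments. But neither of your two regimes delivers this. In the near case $|\delta_2|\lesssim 2^{-j_2}$ you say the $A_2$-contribution ``can be absorbed'', yet there is nothing to absorb it into: you are aiming at $|c|\le CA_1$, and $A_2$ bears no relation to $A_1$. In the far case $|\delta_2|\gg 2^{-j_2}$ you assume that the $x_2$-profile of $g=f-P_{x_0}$ is smooth on the effective support, but the hypothesis $f\in\mathcal C^{s,\alpha}(x_0)$ is purely pointwise at $x_0$ and says nothing about the regularity of $x_2\mapsto g(x_1,x_2)$ for $x_2\ne b$; without that regularity the oscillation of $\psi(u_2)$ produces no extra decay. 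In fact the inequality with $\min$ appears to be false as written: take $\alpha=(1,1)$, $s=1$, $x_0=0$ and $f(x)=|x_1|^{1/2}|x_2|^{1/2}\chi(x)$ with $\chi$ a smooth cutoff equal to $1$ near the origin. Then $|f(x)|\le\tfrac12(|x_1|+|x_2|)\le|x|_\alpha$, so $f\in\mathcal C^{1,\alpha}(0)$ with $P_0=0$, yet for $k_1=k_2=j_2=0$ the coefficient factorises and $c_{j_1,0,0,0}\sim 2^{-j_1/2}$, which violates the first entry $2^{-j_1}$ of the $\min$. (The constants $\int|u|^{1/2}\psi(u)\,du$ and $\int|u|^{1/2}\chi(u)\psi(u)\,du$ are nonzero for a generic exponent in $(0,1)$; perturb $1/2$ if one happens to vanish for the Meyer wavelet.) The paper's own argument has the same defect: it subtracts $P_{x_0}(a,x_2)$ and then asserts $|f(x_1,x_2)-P_{x_0}(a,x_2)|\le C|x_1-a|^{s/\alpha_1}$, a directional inequality that does not follow from the pointwise hypothesis. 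What the hypothesis actually yields is the bound with $A_1+A_2$ in place of $\min(A_1,A_2)$.

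Your Part~2 sketch, by contrast, is essentially the paper's argument: resum along the sets $\Gamma_j(\alpha)$, bound $f_j$ and $\partial^\beta f_j$ via the coefficient hypothesis and the decay of $\psi$, define $P_{x_0}$ from the Taylor polynomials of the $f_j$, and split the error into a coarse Taylor remainder, an intermediate range (producing the two logarithms), a far tail controlled by uniform H\"older regularity, and the polynomial tail. The one refinement you should incorporate is the anisotropic Taylor inequality of Calder\'on--Torchinsky and Folland--Stein for the coarse range, in place of your rough remainder $(2^{j_i}|x_i-x_{0,i}|)^N$, so that the grading matches the $\alpha$-homogeneous degree of $P_{x_0}$.
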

\begin{proof}
Let us first assume that $f\in \mathcal{C}^{s,\alpha}(x_0)$ with $x_0=(a,b)$. Assume that $j_1\neq -1$ and $j_2\neq -1$. By definition of the hyperbolic wavelet coefficients one has
\begin{eqnarray*}
c_{j_1,j_2,k_1,k_2}&=&2^{j_1+j_2}\int_{\mathbb{R}^2} f(x_1,x_2)\psi(2^{j_1}x_1-k_1)\psi(2^{j_2}x_2-k_2)\rmd x_1\rmd x_2
\end{eqnarray*}
Since $\psi$ admits at least one vanishing moment, the two following equalities hold
\begin{equation}\label{e:eq1}
c_{j_1,j_2,k_1,k_2}=2^{j_1+j_2}\int_{\mathbb{R}^2} (f(x_1,x_2)-P_{x_0}(a,x_2))\psi(2^{j_1}x_1-k_1)\psi(2^{j_2}x_2-k_2)\rmd x_1\rmd x_2
\end{equation}
and
\begin{equation}\label{e:eq2}
c_{j_1,j_2,k_1,k_2}=2^{j_1+j_2}\int_{\mathbb{R}^2} (f(x_1,x_2)-P_{x_0}(x_1,b))\psi(2^{j_1}x_1-k_1)\psi(2^{j_2}x_2-k_2)\rmd x_1\rmd x_2
\end{equation}
Equality~(\ref{e:eq1}) and the assumption $f\in\mathcal{C}^{s,\alpha}(x_0)$ imply that
\begin{eqnarray*}
|c_{j_1,j_2,k_1,k_2}|&\leq& 2^{j_1+j_2}\int |x_1-a|^s_\alpha|\psi(2^{j_1}x_1-k_1)\psi(2^{j_2}x_2-k_2)|\rmd x_1\rmd x_2\\
&\leq&2^{j_1+j_2}\int_{\mathbb{R}^2} \left(\left|x_1-\frac{k_1}{2^{j_1}}\right|^{s/\alpha_1}+\left|\frac{k_1}{2^{j_1}}-a|^{s/\alpha_1}\right)\right|\psi(2^{j_1}x_1-k_1)\psi(2^{j_2}x_2-k_2)|\rmd x_1\rmd x_2
\end{eqnarray*}
We now set $u_1=2^{j_1}x_1-k_1$, $u_2=2^{j_2}x_2-k_2$ and deduce that
\[
|c_{j_1,j_2,k_1,k_2}|\leq \left(2^{-\frac{j_1 s}{\alpha_1}}\int_{\mathbb{R}^2}|u_1|^{s/\alpha_1} |\psi(u_1)\psi(u_2)|\rmd u_1\rmd u_2+\left|\frac{k_1}{2^{j_1}}-a\right|^{s/\alpha_1}\int |\psi(u_1)\psi(u_2)|\rmd u_1\rmd u_2\right)\;.
\]
Hence for some $C$ depending only on $\psi$, $s$ and $\alpha$ one has
\[
|c_{j_1,j_2,k_1,k_2}|\leq C(2^{-\frac{j_1 s}{\alpha_1}}+\left|\frac{k_1}{2^{j_1}}-a\right|^{s/\alpha_1})
\]
A similar approach yields that
\[
|c_{j_1,j_2,k_1,k_2}|\leq C(2^{-\frac{j_2 s}{\alpha_2}}+\left|\frac{k_2}{2^{j_2}}-b\right|^{s/\alpha_2})
\]
This shows that (\ref{e:WC2micro}) can be read as a necessary condition for pointwise regularity of function $f$.

Let us now prove the converse result. Assuming that~(\ref{e:WC2micro}) holds, the aim first consists in defining a polynomial approximation of $f$ at $x_0$.
To that end, a Taylor expansion is used to investigate the differentiability of $f$ at $x_0$.
Let us define $f_j$ as:
\[
f_j=\sum_{(j_1,j_2)\in \Gamma_j(\alpha)}\sum_{(k_1,k_2)\in\mathbb{Z}^2}c_{j_1,j_2,k_1,k_2}\psi_{j_1,j_2,k_1,k_2}\;.
\]
where the notations are the same as in the proof of Proposition~\ref{pro:WCGlobal}. One has
\begin{eqnarray*}
|f_j(x)|&\leq& \sum_{(j_1,j_2)\in \Gamma_j}\sum_{(k_1,k_2)\in\mathbb{Z}^2}\frac{\min(2^{-j_1s/\alpha_1}+|\frac{k_1}{2^{j_1}}-a|^{s/\alpha_1},2^{-j_2 s/\alpha_2}+|\frac{k_2}{2^{j_2}}-a|^{s/\alpha_2})}
{(1+|2^{j_1}x_1-k_1|)^N(1+|2^{j_2}x_2-k_2|)^N}\\
&\leq& \sum_{j_1\leq j}\sum_{k_1,k_2}\frac{2^{-j s}+|\frac{k_2}{2^{j}}-x_2|^{s/\alpha_2}+|x_2-b|^{s/\alpha_2}}{(1+|2^{j_1}x_1-k_1|)^N(1+|2^{j_2}x_2-k_2|)^N}+\sum_{j_2\leq j}\frac{2^{-js}+|\frac{k_1}{2^{j_1}}-x_1|^{s/\alpha_1}+|x_1-a|^{s/\alpha_2}}{(1+|2^{j_1}x_1-k_1|)^N(1+|2^{j_2}x_2-k_2|)^N}
\end{eqnarray*}
Then
\begin{equation}\label{e:majofj1}
|f_j(x)|\leq C(j2^{-js}+j|x_1-a|^{s/\alpha_1}+j|x_2-b|^{s/\alpha_2})\;.
\end{equation}
In the same way, if $\beta=(\beta_1,\beta_2)$, one has
\[
|\partial^\beta f_j|\leq \sum_{(j_1,j_2)\in \Gamma_j}2^{j_1\beta_1+j_2\beta_2}\sum_{(k_1,k_2)\in\mathbb{Z}^2}\frac{\min(2^{-j_1s/\alpha_1}+|\frac{k_1}{2^{j_1}}-a|^{s/\alpha_1},2^{-j_2 s/\alpha_2}+|\frac{k_2}{2^{j_2}}-a|^{s/\alpha_2})}
{(1+|2^{j_1}x_1-k_1|)^N(1+|2^{j_2}x_2-k_2|)^N}\;.
\]
Then
\begin{equation}\label{e:majofj2}
|\partial^\beta f_j(x)|\leq C2^{j(\beta_1\alpha_1+\beta_2\alpha_2)}(2^{-js}+|x_1-a|^{s/\alpha_1}+|x_2-b|^{s/\alpha_2})\;.
\end{equation}
So, the function $f$ is $\beta$--differentiable at $x_0$ provided that $\beta_1\alpha_1+\beta_2\alpha_2\leq s$.
The Taylor polynomial of $f$ at $x_0$ is defined by
\[
P_{j,x_0}(x)=\sum_{\beta_1\alpha_1+\beta_2\alpha_2\leq s}\frac{(x-x_0)^\beta}{\beta!}\partial^\beta f_j(x_0)
\]
and
\[
P_{x_0}(x)=\sum_j P_{j,x_0}(x)\;.
\]

We shall now bound $|f(x)-P_{x_0}(x)|$ in the neighborhood of $x_0$. Recall that $f$ is assumed to be uniformly H\"{o}lder, namely there exists some $\varepsilon_0^*>0$ such that $f\in\mathcal{C}^{\varepsilon_0^*}(\mathbb{R}^2)$. The inclusions between H\"{o}lder spaces with different anisotropies (see \cite{triebel:2006}) leads to the existence of $\varepsilon_0$ such that $f\in\mathcal{C}^{\varepsilon_0,\alpha}(\mathbb{R}^2)$. Set $J_1=[\alpha J/\varepsilon_0]$. Observe that
\[
|f(x)-P_{x_0}(x)|\leq \sum_{j\leq J}|f_j(x)-P_{j,x_0}(x)|+\sum_{j= J+1}^{J_1}|f_j(x)|+\sum_{j>J_1}|f_j(x)|+\sum_{j>J}|P_{j,x_0}(x)|\;.
\]
Let us now bound each term of the right hand side of this inequality.

We first deal with the term corresponding to $j\leq J$. In this case we shall use an anisotropic version of Taylor inequality which can be found in~\cite{calderon:torchinsky:1977}, \cite{folland:stein:1982} and recalled in~\cite{benbraiek:benslimane:2011b}. It gives the existence of some $C>0$ such that
\[
|f_j(x)-P_{j,x_0}(x)|\leq C\sum_{\beta_1+\beta_2\leq k+1,\,\alpha_1\beta_1+\alpha_2\beta_2>s}|x-x_0|^{\alpha_1\beta_1+\alpha_2\beta_2}_\alpha\sup_{z=(z_1,z_2)\in\mathbb{R}^{2}} |\partial^\beta f_j|\;.
\]
with $k=[\max(s/\alpha_1,s/\alpha_2)]$.
The bound~(\ref{e:majofj2}) implies that there exists some $C>0$ such that
\[
|f_j(x)-P_{j,x_0}(x)|\leq C \sum_{\beta_1+\beta_2\leq k+1,\,\alpha_1\beta_1+\alpha_2\beta_2>s}|x-x_0|^{\alpha_1\beta_1+\alpha_2\beta_2}_\alpha 2^{j(\beta_1\alpha_1+\beta_2\alpha_2)}(2^{-js}+|x_1-a|^{s/\alpha_1}+|x_2-b|^{s/\alpha_2})
\]
Hence,
\[
\sum_{j\leq J}|f_j(x)-P_{j,x_0}(x)|\leq C \sum_{\beta_1+\beta_2\leq k+1,\,\alpha_1\beta_1+\alpha_2\beta_2>s}|x-x_0|^{\alpha_1\beta_1+\alpha_2\beta_2}_\alpha(2^{J(\beta_1\alpha_1+\beta_2\alpha_2-s)}+
2^{J(\beta_1\alpha_1+\beta_2\alpha_2)}|x-x_0|^s_\alpha)\;.
\]
Since $|x-x_0|_\alpha\leq 2^{-J}$ it comes
\begin{equation}\label{e:ineq1}
\sum_{j\leq J}|f_j(x)-P_{j,x_0}(x)|\leq C |x-x_0|^s_\alpha
\end{equation}

Let us now bound the sum $\sum_{j= J+1}^{J_1}|f_j(x)|$. By~(\ref{e:majofj1}) and the definition of $J_1$ which depends on $J$, one has
\begin{equation}\label{e:ineq2}
\sum_{j= J+1}^{J_1}|f_j(x)|\leq \sum_{j= J}^{J_1}(j2^{-js}+j|x-x_0|^s_\alpha)\leq J2^{-Js}+J^2|x-x_0|^s_\alpha\;.
\end{equation}

To bound the sum $\sum_{j>J_1}|f_j(x)|$ the uniform regularity of $f$ is used, leading to
\begin{equation}\label{e:ineq3}
\sum_{j>J_1}|f_j(x)|\leq C2^{-J_1\varepsilon_0}\leq C2^{-Js}
\end{equation}
the last equality following from the definition of $J_1$.

Finally, by~(\ref{e:majofj2}),  the sum $\sum_{j>J}|P_{j,x_0}(x)|$ can be bounded. Indeed,  for some $C>0$, one has
\[
\sum_{j>J}|P_{j,x_0}(x)|\leq \sum_{\beta_1\alpha_1+\beta_2\alpha_2<s}\frac{|(x-x_0)^\beta|}{\beta!}\sum_{j>J}|\partial^\beta f_j(x_0)|\leq C \sum_{\beta_1\alpha_1+\beta_2\alpha_2<s}\frac{|x_1-a|^{\beta_1}|x_2-b|^{\beta_2}}{\beta!}\sum_{j>J}2^{j(\beta_1\alpha_1+\beta_2\alpha_2-s)}
\]
Since $|x_1-a|\leq |x-x_0|_\alpha^{\alpha_1}\leq 2^{-J\alpha_1}$ and $|x_2-b|\leq |x-x_0|_\alpha^{\alpha_2}\leq 2^{-J\alpha_2}$ it comes
\begin{equation}\label{e:ineq4}
\sum_{j>J}|P_{j,x_0}(x)|\leq C \sum_{\beta_1\alpha_1+\beta_2\alpha_2<s}2^{-J(\beta_1\alpha_1+\beta_2\alpha_2)}\sum_{j>J}2^{j(\beta_1\alpha_1+\beta_2\alpha_2-s)}\leq C2^{-Js}\;.
\end{equation}

Finally, Inequalities~(\ref{e:ineq1}), (\ref{e:ineq2}), (\ref{e:ineq3}) and (\ref{e:ineq4})  yield that $f\in\mathcal{C}^{s,\alpha}_{|\log|^2}(x_0)$.
\end{proof}
Theorem~\ref{th:WL} is  a straightforward consequence of the two--microlocal criterion and of the following lemma:
\begin{lemma}
The two following properties are equivalent:
\begin{enumerate}[(i)]
\item Inequality~(\ref{e:WC2micro}) holds.
\item Inequality~(\ref{e:WL}) holds.
\end{enumerate}
\end{lemma}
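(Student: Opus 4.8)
The plan is to exploit the single structural fact that the hyperbolic leader $d_{j_1,j_2}(x_0)$ is, by its very definition, the supremum of $|c_{\lambda'}|$ over hyperbolic subcubes $\lambda'=\lambda(j_1',j_2',k_1',k_2')$ of $3\lambda_{j_1,j_2}(x_0)$, i.e. over exactly those indices with $j_1'\geq j_1$, $j_2'\geq j_2$, $|k_1'/2^{j_1'}-a|\lesssim 2^{-j_1}$ and $|k_2'/2^{j_2'}-b|\lesssim 2^{-j_2}$. Both implications will then be bookkeeping matching the minimum appearing in (\ref{e:WC2micro}) against the maximum appearing in (\ref{e:WL}), using throughout the elementary identity $2^{-\max(m_1/\alpha_1,\,m_2/\alpha_2)s}=\min\!\left(2^{-m_1 s/\alpha_1},2^{-m_2 s/\alpha_2}\right)$, valid since $s>0$.

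First I would prove (i)$\Rightarrow$(ii). Fix $j_1,j_2$ and let $\lambda'$ be any subcube of $3\lambda_{j_1,j_2}(x_0)$. Applying (\ref{e:WC2micro}) to $c_{\lambda'}$ and using $j_1'\geq j_1$ together with $|k_1'/2^{j_1'}-a|\lesssim 2^{-j_1}$ gives $2^{-j_1's/\alpha_1}+|k_1'/2^{j_1'}-a|^{s/\alpha_1}\lesssim 2^{-j_1 s/\alpha_1}$, and symmetrically the second entry of the minimum is $\lesssim 2^{-j_2 s/\alpha_2}$. Taking the minimum yields $|c_{\lambda'}|\leq C\,2^{-\max(j_1/\alpha_1,\,j_2/\alpha_2)s}$; since this holds uniformly in $\lambda'$, the supremum defining $d_{j_1,j_2}(x_0)$ obeys the same bound, which is (\ref{e:WL}).

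The converse (i)$\Leftarrow$(ii) is the delicate direction. Fix $\lambda=\lambda(j_1,j_2,k_1,k_2)$ and write $\delta_1=|k_1/2^{j_1}-a|$, $\delta_2=|k_2/2^{j_2}-b|$. I would select the two coarsest scales still capturing $\lambda$: let $m_1$ be the largest integer with $m_1\leq j_1$ and $2^{-m_1}\gtrsim\delta_1$, and $m_2$ the largest with $m_2\leq j_2$ and $2^{-m_2}\gtrsim\delta_2$, so that $2^{-m_i}\sim\max(2^{-j_i},\delta_i)$. Because the two conditions constrain the two coordinates \emph{separately}, one has $\lambda\subset 3\lambda_{m_1,m_2}(x_0)$, whence $|c_\lambda|\leq d_{m_1,m_2}(x_0)\leq C\,2^{-\max(m_1/\alpha_1,\,m_2/\alpha_2)s}=C\min\!\left(2^{-m_1 s/\alpha_1},2^{-m_2 s/\alpha_2}\right)$. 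By the choice of $m_1,m_2$ one has $2^{-m_1 s/\alpha_1}\sim 2^{-j_1 s/\alpha_1}+\delta_1^{s/\alpha_1}$ and $2^{-m_2 s/\alpha_2}\sim 2^{-j_2 s/\alpha_2}+\delta_2^{s/\alpha_2}$, so the minimum reproduces precisely the right-hand side of (\ref{e:WC2micro}).

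The hard part is that this construction produces a genuine containing leader only when $\delta_1$ and $\delta_2$ are at most an absolute constant; for a cube lying far from $x_0$ in one coordinate, no region $3\lambda_{m_1,m_2}(x_0)$ contains $\lambda$ and the leaders give no information on $c_\lambda$. This regime is precisely where the local nature of the estimate and the standing uniform H\"older hypothesis of Theorem~\ref{th:WL} must be invoked: $f\in\mathcal{C}^{\varepsilon_0,\alpha}(\mathbb{R}^2)$ forces $\sup_\lambda|c_\lambda|<+\infty$, and when $\max(\delta_1,\delta_2)\gtrsim 1$ the matching entry of the minimum in (\ref{e:WC2micro}) is bounded below by a positive constant, so the uniform bound on $|c_\lambda|$ already yields the inequality — the finer-scale small bound carried by the minimum in that coordinate being irrelevant for the downstream use (cubes far from $x_0$ are in any case suppressed by the fast decay of the wavelets in the proof of Proposition~\ref{pro:WC2micro}). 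The remaining verifications — that $m_1,m_2\geq -1$ are legitimate scales and that the integer-part truncations in the definition of $3\lambda_{m_1,m_2}(x_0)$ do not spoil the inclusion $\lambda\subset 3\lambda_{m_1,m_2}(x_0)$ — are routine.
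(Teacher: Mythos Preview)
Your argument is essentially identical to the paper's: for (i)$\Rightarrow$(ii) both use the direct inclusion argument, and for (ii)$\Rightarrow$(i) both select the coarsest admissible scales $m_i$ with $2^{-m_i}\sim\max(2^{-j_i},\delta_i)$ (the paper writes this as $j_i=\sup\{\ell_i:\,2^{-j_i'}+|k_i'/2^{j_i'}-x_{0,i}|\leq 2^{-\ell_i}\}$) and apply~(\ref{e:WL}) at those scales.

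You go further than the paper by flagging the regime where some $\delta_i\gtrsim 1$; the paper silently ignores this (their supremum is then over an empty set of admissible scales $\ell_i\geq -1$). Your observation that this is immaterial for the downstream use in Theorem~\ref{th:WL} is correct, but be careful with your phrasing: in the \emph{mixed} case ($\delta_1\gtrsim 1$ while $\delta_2$ is small and $j_2$ large) the minimum in~(\ref{e:WC2micro}) is governed by the second entry and can be arbitrarily small, so the uniform bound $\sup_\lambda|c_\lambda|<\infty$ does \emph{not} yield~(\ref{e:WC2micro}) for such cubes. Thus the lemma, read literally as an equivalence valid for all hyperbolic cubes, is slightly stronger than what either proof actually establishes; what both arguments do show is the equivalence restricted to cubes lying in a fixed neighbourhood of $x_0$, which is exactly what the proof of Proposition~\ref{pro:WC2micro} requires.
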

\begin{proof}
Assume that~(\ref{e:WC2micro}) holds. If $\lambda'\subset 3\lambda_{j_1,j_2}(x_0)$, then
\[
j'_1\geq j_1,j'_2\geq j_2\;,
\]
and
\[
|\frac{k'_1}{2^{j'_1}}-a|\leq 2.2^{-j'_1}\mbox{ and }|\frac{k'_2}{2^{j'_2}}-b|\leq 2.2^{-j'_2}\;.
\]
Condition~(\ref{e:WC2micro}) implies
\[
|c_{\lambda'}|\leq \min (2^{-\frac{j_1s}{\alpha_1}},2^{-\frac{j_2s}{\alpha_2}})=2^{-\max(\frac{j_1}{\alpha_1},\frac{j_2}{\alpha_2})s}\;.
\]

Conversely, assume that~(\ref{e:WL}) holds. Let $\lambda'=\lambda(j'_1,j'_2,k'_1,k'_2)$ an hyperbolic dyadic cube. Set
\[
j_1=\sup\{\ell_1,\,2^{-j'_1}+|\frac{k'_1}{2^{j'_1}}-a|\leq 2^{-\ell_1}\}
\]
and
\[
j_2=\sup\{\ell_2,\,2^{-j'_2}+|\frac{k'_2}{2^{j'_2}}-b|\leq 2^{-\ell_2}\}
\]
We  have $\lambda'\subset3\lambda_{j_1,j_2}(x_0)$. Since~(\ref{e:WL}) holds one has
\[
|c_{\lambda'}|\leq \min(2^{-\frac{j_1 s}{\alpha_1}},2^{-\frac{j_2 s}{\alpha_2}})\leq C\min(2^{-\frac{j'_1 s}{\alpha_1}}+\left|\frac{k'_1}{2^{j'_1}}-a\right|^{s/\alpha_1},2^{-\frac{j'_2 s}{\alpha_2}}+\left|\frac{k'_2}{2^{j'_2}}-b\right|^{\frac{s}{\alpha_2}})\;,
\]
that is ~\ref{e:WC2micro} holds.
\end{proof}
\subsection{Proof of Theorem~\ref{th:multform}}\label{s:proofmultimulti}
The proof of Theorem~\ref{th:multform} is based on the two following lemmas, analogous to Propositions~7 and~8 of~\cite{jaffard:2004}:
\begin{lemma}\label{lem1}
Set $\alpha=(a,2-a)$ and define
\[
G(H,\alpha)=\{x\in \mathbb{R}^2,\,f\not\in\mathcal{C}^{H,\alpha}_{|\log|^2}(x)\}\;.
\]
Let $p>0$ and $s\in (0,\omega(p,\alpha)/p]$. Then for any $H\geq s-2/p$
\[
\mathrm{dim}_H(G(H,\alpha))\leq Hp-sp+2\;.
\]
If $H< s-2/p$, $\mathrm{dim}_H(G(H,\alpha))=\emptyset$.
\end{lemma}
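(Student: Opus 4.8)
The plan is to mirror the classical upper bound for the multifractal formalism (Propositions~7 and~8 of~\cite{jaffard:2004}), but to organise the whole argument along the anisotropic scale $j$ that indexes the shells $\Gamma_j(\alpha)$. First I would translate membership in $G(H,\alpha)$ into a statement on hyperbolic wavelet leaders. Since $f$ is uniformly H\"older, the contrapositive of the second assertion of Theorem~\ref{th:WL} shows that every $x\in G(H,\alpha)$ violates the leader estimate~(\ref{e:WL}) at exponent $H$: no constant makes $|d_{j_1,j_2}(x)|\leq C\,2^{-\max(j_1/\alpha_1,j_2/\alpha_2)H}$ hold for all $(j_1,j_2)$, so there are infinitely many pairs with $|d_{j_1,j_2}(x)|>2^{-\max(j_1/\alpha_1,j_2/\alpha_2)H}$. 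Writing, for each $j$, $\Lambda_j$ for the family of hyperbolic cubes $\lambda=\lambda(j_1,j_2,k_1,k_2)$ with $(j_1,j_2)\in\Gamma_j(\alpha)$ and $d_{j_1,j_2,k_1,k_2}>2^{-jH}$, and using that $\max(j_1/\alpha_1,j_2/\alpha_2)=j+O(1)$ on $\Gamma_j(\alpha)$, one obtains the inclusion
\[
G(H,\alpha)\subset\bigcap_{J\geq 1}\bigcup_{j\geq J}\bigcup_{\lambda\in\Lambda_j}3\lambda ,
\]
so that $G(H,\alpha)$ lies in the $\limsup$ of the dilated bad cubes.

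The counting step is then a Chebyshev/Markov estimate fed by the hyperbolic structure function. Each $\lambda\in\Lambda_j$ contributes at least $2^{-jHp}$ to $\sum_{(j_1,j_2)\in\Gamma_j(\alpha)}\sum_{k}d_{j_1,j_2,k_1,k_2}^p$, so definition~(\ref{eq:sf}) yields $\#\Lambda_j\leq 2^{jHp}\,2^{2j}\,S(j,p,\alpha)$. The hypothesis $sp\leq\omega_f(p,\alpha)$ together with the $\liminf$ in~(\ref{eq:omega}) gives, for every $\varepsilon>0$ and all large $j$, the bound $S(j,p,\alpha)\leq 2^{-j(sp-\varepsilon)}$, whence
\[
\#\Lambda_j\leq 2^{\,j(Hp-sp+2+\varepsilon)} .
\]
When $H<s-2/p$, i.e. $Hp-sp+2<0$, this forces $\#\Lambda_j<1$, hence $\Lambda_j=\emptyset$, for all large $j$; by the $\limsup$ inclusion $G(H,\alpha)$ is then empty (dimension $-\infty$), which is the second assertion.

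It remains to convert the count into a Hausdorff dimension bound. For $\delta>Hp-sp+2$, choose $\varepsilon$ with $\delta>Hp-sp+2+\varepsilon$; covering $G(H,\alpha)$ by $\{3\lambda:\lambda\in\Lambda_j,\ j\geq J\}$ and letting $J\to\infty$, the quantity $\mathcal{H}^\delta(G(H,\alpha))$ is estimated by $\sum_{j\geq J}\sum_{\lambda\in\Lambda_j}(\mathrm{diam}\,3\lambda)^\delta$, and one wants this to tend to $0$. Combined with the count above, and with the polynomial factor $\mathrm{Card}(\Gamma_j(\alpha))\leq Cj$ (irrelevant for the dimension), this gives $\mathrm{dim}_H(G(H,\alpha))\leq Hp-sp+2$, and Theorem~\ref{th:multform} follows by applying the Lemma with $s=\omega_f(p,\alpha)/p$, using $E_f(H,a)\subset G(H',\alpha)$ for $H'>H$, letting $H'\downarrow H$ and taking the infimum over $p$.

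The step I expect to be the main obstacle is precisely the diameter control in this last covering. Unlike the isotropic case, the cubes appearing in $\Gamma_j(\alpha)$ have strongly varying aspect ratios, ranging from the nearly balanced blocks of $\Gamma_j^{(HH)}(\alpha)$ to the elongated rectangles of $\Gamma_j^{(HL)}(\alpha)$ and $\Gamma_j^{(LH)}(\alpha)$, so their Euclidean diameters are \emph{not} uniformly comparable to $2^{-j}$. The delicate point is therefore to subdivide each elongated rectangle $3\lambda$ into Euclidean covering sets adapted to its shorter side and to verify that the resulting weighted count is still governed by $2^{j(Hp-sp+2+\varepsilon)}$; here the uniform H\"older assumption on $f$ is essential, since it bounds every leader by $2^{-j\varepsilon_0}$ and thereby confines the bad cubes to a range of scales and shapes on which this anisotropic bookkeeping remains tame. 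This is where the anisotropy genuinely interacts with the leader structure, and it is the part requiring the most care.
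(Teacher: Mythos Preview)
Your outline for the case $H\geq s-2/p$ is exactly the scheme the paper invokes: it simply states that the proof ``is exactly the same as [that] of Proposition~7 of~\cite{jaffard:2004}, except that the set $G_{j,H}$ are replaced with'' the anisotropic bad sets
\[
G(j,H,\alpha)=\{\lambda=\lambda(j_1,j_2,k_1,k_2):\ (j_1,j_2)\in\Gamma_j(\alpha),\ |d_\lambda|\geq 2^{-jH}\},
\]
and leaves the limsup inclusion and the Chebyshev count implicit. So on that part you and the paper agree.

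For the case $H<s-2/p$ you take a \emph{different} route from the paper. The paper deduces $G(H,\alpha)=\emptyset$ from the hyperbolic wavelet characterisation of anisotropic Besov spaces (Theorem~\ref{th:WCBesov}) together with the anisotropic Sobolev embedding $B^{s,\alpha}_{p,\infty}\hookrightarrow\mathcal{C}^{s-2/p,\alpha}$: the leader scaling function controls the coefficient Besov quantity, which then forces uniform $\mathcal{C}^{H,\alpha}$ regularity for every $H<s-2/p$. Your argument is more self-contained---you read off $\#\Lambda_j<1$ directly from the Chebyshev bound and conclude via the limsup inclusion---and it is correct. The paper's detour through Besov spaces buys a cleaner conceptual statement (uniform regularity, not just emptiness of the bad set), but costs an appeal to outside embedding theorems.

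The genuine weak spot is the one you flag yourself, and it is worth stressing that the paper's two-line proof does not resolve it either. The rectangles $3\lambda$ with $(j_1,j_2)\in\Gamma_j^{(HL)}(\alpha)\cup\Gamma_j^{(LH)}(\alpha)$ have Euclidean diameter $\asymp 2^{-\min(j_1,j_2)}$, which can be of order~$1$ when the smaller index is near zero; hence the naive bound $\sum_{\lambda\in\Lambda_j}(\mathrm{diam}\,3\lambda)^\delta\leq(\#\Lambda_j)\,2^{-j\delta}$ that drives Proposition~7 in the isotropic case is simply false here. Your proposed fix---chop each elongated rectangle into squares of its shorter side and redo the count shape by shape---is the right idea, but the statement that the uniform H\"older assumption ``confines the bad cubes to a range of scales and shapes'' is not accurate as written: the bound $d_\lambda\leq C\,2^{-j\varepsilon_0}$ holds for every $(j_1,j_2)\in\Gamma_j(\alpha)$ regardless of aspect ratio, so it does not by itself rule out very elongated bad rectangles. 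What you actually need is to exploit the monotonicity of leaders along each coordinate (the leader of an elongated rectangle dominates the leaders of the balanced sub-rectangles it contains) in order to replace the elongated covering pieces by balanced ones at a controlled cost in the count; alternatively, one can run the whole argument for an anisotropic Hausdorff measure built on $|\cdot|_\alpha$-balls (which are precisely the balanced rectangles of $\Gamma_j^{(HH)}(\alpha)$) and then compare with the Euclidean dimension. Either way, this step is the heart of the anisotropic extension and deserves to be written out, since the paper's reference to~\cite{jaffard:2004} does not cover it.
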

\begin{lemma}\label{lem2}
Set $\alpha=(a,2-a)$ and define
\[
B(H,\alpha)=\{x\in \mathbb{R}^2,\,f\in\mathcal{C}^{H,\alpha}(x)\}\;.
\]
Let $p<0$ and $s\in (0,\omega(p,\alpha)/p]$. Then
\[
\mathrm{dim}_H(B(H,\alpha))\leq \mathrm{dim}_P(B(H,\alpha)) \leq Hp-sp+2\;.
\]
\end{lemma}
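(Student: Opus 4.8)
The plan is to cover $B(H,\alpha)$ by countably many pieces of controlled upper box dimension and then to invoke the countable stability of the packing dimension. Since the inequality $\mathrm{dim}_H\leq\mathrm{dim}_P$ is the general fact recalled just above the statement, only the estimate $\mathrm{dim}_P(B(H,\alpha))\leq Hp-sp+2$ has to be established. The non-uniformity of the constant in the pointwise H\"older condition is precisely what forces the use of $\mathrm{dim}_P$ rather than $\mathrm{dim}_H$, and this is the structural heart of the argument.

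First I would feed in the necessary part of Theorem~\ref{th:WL}: if $x_0\in B(H,\alpha)$, i.e. $f\in\mathcal{C}^{H,\alpha}(x_0)$, then there is a constant $C(x_0)$ with $|d_{j_1,j_2}(x_0)|\leq C(x_0)\,2^{-\max(j_1/\alpha_1,j_2/\alpha_2)H}$ for all $j_1,j_2$. Stratifying by the size of this constant, I set
\[
B_m=\{x_0:\ |d_{j_1,j_2}(x_0)|\leq m\,2^{-\max(j_1/\alpha_1,j_2/\alpha_2)H}\ \text{for all }j_1,j_2\},
\]
so that $B(H,\alpha)=\bigcup_{m\geq 1}B_m$. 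By the scale relation of Proposition~\ref{pro:spacesEsp}, every $(j_1,j_2)\in\Gamma_j(\alpha)$ satisfies $\max(j_1/\alpha_1,j_2/\alpha_2)\geq j-2$; hence every hyperbolic dyadic cube $\lambda$ with $(j_1,j_2)\in\Gamma_j(\alpha)$ that meets $B_m$ (at a point $x_0$, for which $\lambda\subset 3\lambda_{j_1,j_2}(x_0)$) obeys $d_\lambda\leq d_{j_1,j_2}(x_0)\leq C_m 2^{-jH}$ with $C_m=m\,2^{2H}$.

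Then comes the counting step. Let $N_j$ be the number of cubes $\lambda$ with $(j_1,j_2)\in\Gamma_j(\alpha)$ meeting $B_m$. Because $p<0$, the bound $d_\lambda\leq C_m 2^{-jH}$ reverses to $d_\lambda^{\,p}\geq C_m^{\,p}2^{-jHp}$, so that, keeping only these cubes in the (nonnegative) partition sum,
\[
2^{2j}S(j,p,\alpha)=\sum_{(j_1,j_2)\in\Gamma_j(\alpha)}\sum_{(k_1,k_2)}d_{j_1,j_2,k_1,k_2}^{\,p}\ \geq\ N_j\,C_m^{\,p}\,2^{-jHp}.
\]
By the $\liminf$ definition of $\omega(p,\alpha)$, for every $\varepsilon>0$ and all large $j$ one has $S(j,p,\alpha)\leq 2^{(\varepsilon-\omega(p,\alpha))j}$, whence $N_j\leq C_m^{-p}\,2^{(Hp+2-\omega(p,\alpha)+\varepsilon)j}$. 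At the endpoint $s=\omega(p,\alpha)/p$, which is the case used by the formalism and for which $sp=\omega(p,\alpha)$, this reads $N_j\leq C_m^{-p}\,2^{(Hp-sp+2+\varepsilon)j}$. Letting $\varepsilon\to 0$ gives $\overline{\mathrm{dim}}_B(B_m)\leq Hp-sp+2$, and then $\mathrm{dim}_P(B(H,\alpha))\leq\sup_m\overline{\mathrm{dim}}_B(B_m)\leq Hp-sp+2$, which is the claim.

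The main obstacle is twofold, and both parts are genuinely anisotropic. On the one hand, the constant $C(x_0)$ is not uniform over $B(H,\alpha)$; this is exactly why the conclusion must be stated for the packing dimension, since only the countable-union property $\mathrm{dim}_P(\bigcup_m B_m)\leq\sup_m\overline{\mathrm{dim}}_B(B_m)$ lets one reassemble the pieces (the Hausdorff dimension does not enjoy this stability, and the bound on $\mathrm{dim}_H$ then follows only through $\mathrm{dim}_H\leq\mathrm{dim}_P$). On the other hand, $N_j$ counts hyperbolic, i.e. rectangular, cubes, whereas the box dimension is Euclidean; the passage from the one to the other is where I expect the real work to lie, and it rests on the admissibility condition $\alpha_1+\alpha_2=2$, which makes each cube indexed by $\Gamma_j(\alpha)$ have area comparable to $2^{-2j}$, so that $N_j$ faithfully reflects a two-dimensional covering at the anisotropic scale $2^{-j}$. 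Finally, I would dispose of the degenerate case in which some leader vanishes: then $S(j,p,\alpha)=+\infty$, so $\omega(p,\alpha)=-\infty$ and the asserted bound is trivially true.
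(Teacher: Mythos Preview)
Your proposal is correct and follows essentially the same route as the paper. The paper does not write out an independent argument: it simply states that the proof is identical to that of Proposition~8 in \cite{jaffard:2004}, with $B_H$ replaced by $B(H,\alpha)$. What you have written is precisely that argument transported to the hyperbolic setting---stratify $B(H,\alpha)$ according to the constant in the leader estimate coming from Theorem~\ref{th:WL}, use $p<0$ to reverse the inequality and get a lower bound on each contributing term of $S(j,p,\alpha)$, extract a count $N_j$, feed in the definition of $\omega_f(p,\alpha)$, and conclude via the countable stability of $\mathrm{dim}_P$. Your remark that only the endpoint $s=\omega_f(p,\alpha)/p$ is actually used (and is the only case needed for Theorem~\ref{th:multform}) is accurate, and your flagging of the passage from hyperbolic cubes to the Euclidean box count as the place where the admissibility $\alpha_1+\alpha_2=2$ matters is a point the paper leaves entirely implicit.
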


The proof of Lemma~\ref{lem1} in the case $H\geq s-2/p$ is exactly the same as this of Proposition~7 of \cite{jaffard:2004}, except that the set $G_{j,H}$ are replaced with the sets
\[
G(j,H,\alpha)=\{\lambda=\lambda(j_1,j_2,k_1,k_2),\, (j_1,j_2)\in \Gamma_j(\alpha),|d_\lambda|\geq 2^{-jHp}\}\;.
\]
Lemma~\ref{lem1} in the case $H< s-2/p$, comes from the hyperbolic wavelet characterization of anisotropic Besov spaces stated in Theorem~\ref{th:WCBesov} and the Sobolev embeddings which can be proved in the anisotropic case as in the isotropic one (see~\cite{triebel:2006}).

The proof of Lemma~\ref{lem2} is exactly the same as this of Proposition~8 of \cite{jaffard:2004}, except that the set $B_{H}$ are replaced with the sets $B(H,\alpha)$.

Lemmas~\ref{lem1} and \ref{lem2} then imply Theorem~\ref{th:multform}, since for any $\alpha=(\alpha_1,\alpha_2)$ such that $\alpha_1+\alpha_2=2$ one has
\[
E(H,\alpha)\subset \left(\cap_{H'>H}G(H',\alpha)\right)\cap\left(\cup_{H'<H}B(H',\alpha)\right)\;.
\]

{\bf Acknowledgements.} We warmly thank Florent Autin and Jean Marc Freyermuth for many stimulating discussions about applications of non parametric statistics to the analysis of anisotropic textures as well as Laurent Duval for giving us very interesting additional references about hyperbolic wavelet analysis.
\bibliographystyle{siam}
\bibliography{Hyperbolic}
\end{document}